\def\scr{\mathscr}
\def\Osheaf{\mathscr{O}}
\def\ra{\rightarrow} 
\def\id{\mathrm{id}}
\def\ol{\overline}
\DeclareMathOperator{\Spec}{\mathrm{Spec}}
\def\bbZ{\mathbb{Z}}
\def\Id{\mathrm{id}}
\newcommand{\catVec}[1]{\mathrm{Vec}_k}
\newcommand{\Homi}{\scr H\kern-2pt\mathpzc{om}}
\newcommand{\bbA}{\mathbb{A}}
\newcommand{\CH}{\mathrm{CH}}
\newcommand{\KMiln}{K^{M}}
\newcommand{\bbG}{\mathbb{G}}
\newcommand{\bbP}{\mathbb{P}}
\newcommand{\Blw}{B}
\newcommand{\sC}{\scr C}
\newcommand{\Cores}{\mathrm{Cores}}
\newcommand{\Res}{\mathrm{Res}}
\newcommand{\bfone}{\mathbf{1}}
\DeclareMathOperator{\colim}{\mathrm{colim}}
\DeclareMathOperator{\Proj}{\mathrm{Proj}}
\newcommand{\catgrC}{\mathbf{grC}}
\newcommand{\redT}{\overline{T^c}}
\newcommand{\sh}{\mathrm{S}}
\newcommand{\Barc}{\mathrm{B}}
\title{Cycle Modules and the Intersection $A_{\infty}$-algebra}
\author{Florian Ivorra} 
\address{Institut de recherche math\'ematique de Rennes\\ UMR 6625 du CNRS\\ Universit\'e de Rennes 1\\
Campus de Beaulieu\\
35042 Rennes cedex (France)}
\email{florian.ivorra@univ-rennes1.fr}
\begin{document}
\frontmatter
\begin{abstract}
In his paper {\itshape Chow Group with Coefficients}, M. Rost has developed a generalization of the classical Chow groups based on Milnor K-theory and an axiomatic generalization of it called cycles modules. For a cycle module $M$ with a ring structure and a smooth scheme $X$ of finite type over a field, we show that Rost's cycle complex with coefficients $C^*(X,M)$ has a structure of an $A_{\infty}$-algebra. In the case of Milnor $K$-theory it provides an homotopy model for the classical intersection theory of cycles.
To construct the $A_{\infty}$-algebra structure we first construct such an $A_{\infty}$-algebra structure on a homotopy invariant version of Rost's complex, this step relies on geometry, and then we apply homological perturbation theory.
\end{abstract}
\begin{altabstract}
Dans son article {\itshape Chow Group with Coefficients}, M. Rost a g\'en\'eralis\'e les groupes de Chow classique \`a partir de la K-th\'eorie de Milnor et une g\'en\'eralisation axiomatique de cette derni\`ere: les modules de cycles. \'Etant donn\'e un module de cycle $M$ muni d'un produit et un sch\'ema lisse $X$ de type fini sur un corps, nous montrons dans ce travail que le complexe de cycle $C^*(X,M)$ \`a coefficients dans $M$ introduit par M. Rost poss\`ede une structure naturelle de $A_{\infty}$-alg\`ebre. Dans le cas particulier de la K-th\'eorie de Milnor cette structure rel\`eve la th\'eorie classique de l'intersection des cycles alg\'ebriques au niveau des complexes. La construction pr\'esente deux facettes, une facette g\'eom\'etrique reposant sur des espaces de d\'eformation au c\^one normal ad hoc et une facette homologique reposant sur le lemme de perturbation.
\end{altabstract}
\thanks{}
\maketitle

\tableofcontents
\mainmatter

\section*{Introduction}
The main basic operation in the intersection theory as developed by W. Fulton \cite{Fulton} is the {\emph{Gysin map}} $f^*:\CH_p(X)\ra\CH_{p-d}(Y)$  associated to a closed regular immersion $f:Y\ra X$ of codimension $d$ between to separated schemes of finite type over a field $k$. This map is obtained as the composition of the specialization map $\CH_p(X)\ra\CH_p(N_YX) $  provided by the {\emph{deformation to the normal cone}} $N_YX$ and the inverse of the pullback map $\CH_{p-d}(Y)\ra\CH_p(N_YX)$. In particular if $X$ is smooth of pure dimension $d$, the diagonal $\Delta_X:X\ra X\times_kX$ being a closed regular immersion of codimension $d$, the associated Gysin map allows to define the intersection of two cycles $\alpha\in\CH_p(X)$ and $\beta\in\CH_q(X)$ as the cycle in $\CH_{p+q-d}(X)$ given by $\alpha\cdot\beta=\Delta^*_X(\alpha\times\beta)$. This intersection product is associative since Gysin maps are functorial. Recall that the usual Chow group $\CH_p(X)$ is the cokernel of the divisor map:
$$\bigoplus_{x\in X_{(p+1)}}\kappa(x)^{\times}\xrightarrow{\mathrm{div}}\bigoplus_{x\in X_{(p)}}\bbZ,$$
which is the last non vanishing differential of the Gerten complex for Milnor K-theory
$$\cdots\ra\bigoplus_{x\in X_{(p+r)}}\KMiln_{r}(\kappa(x)) \ra\cdots\ra \bigoplus_{x\in X_{(p+1)}}\KMiln_1(\kappa(x))\xrightarrow{\mathrm{div}}\bigoplus_{x\in X_{(p)}}\KMiln_0(\kappa(x))\ra 0.$$
In \cite{RostChowCoeff} M. Rost has generalized the classical intersection theory in two directions, first by considering the whole Gersten complex and not only its $0$-homology and secondly by allowing more general coefficients than Milnor K-groups: his so called {\emph{cycle modules}}, which are essentially graded modules over Milnor K-theory endowed with a few extra maps. Given a cycle module $M$, M. Rost builds its intersection theory on the associated cycle complex $C_*(X,M)$, a Gersten like complex with components given by 
$$C_p(X,M,n):=\bigoplus_{x\in X_{(p)}}M_{n+p}(\kappa(x)),$$
entirely in terms of the deformation to the normal cone and four basic maps defined in a pointwise manner at the level of complexes: pullbacks, pushforwards, mutiplication by units and boundary maps. Let $A_p(X,M,n)$ be the $p$-homology of the cycle complex of $X$,  he not only constructs a Gysin map $f^*:A_p(X,M,n)\ra A_{p-d}(Y,M,n+d)$ which coincides with Fulton's map for $n=-p$ and $M=\KMiln_*$, but he also lifts this map to a map of complexes
$$I(f):C_*(X,M,n)\ra C_{*-d}(Y,M,n+d)$$
entirely defined in terms of the four basic maps. To quote \cite[page 324]{RostChowCoeff}:
\begin{quote}
{\itshape
\guillemotleft [...] there is a canonical procedure wich starts from the choice of a coordination of the normal bundle of $f$ and yields a map $I(f)$, as desired, defined in terms of the four basic maps. Different choices lead to homotopic maps $I(f)$, with the homotopy again expressible in terms of the four basic maps\guillemotright.}
\end{quote}
He also proves functoriality by showing that for another closed regular immersion $g:Z\ra Y$ the maps $I(g)\circ I(f)$ and $I(f\circ g)$ are homotopic. Once again he constructs explicitly the homotopy in terms of the four basic maps and a space of {\emph{double deformation to the normal cone}}. By looking closely to the construction of the homotopy between $I(g)\circ I(f)$ and $I(f\circ g)$ given in lemmas 11.6  and 11.7 of \cite{RostChowCoeff}, it is striking that not only does the construction involve solely the four basic maps but that the homotopy itself looks like a Gysin map since it appears to be essentially the composition of some kind of specialization map with a homotopy inverse. In one specializes to the case of the intersection products, this suggests that the lack of associativity of the intersection product given by the Gysin map $I(\Delta_X)$ at the level of the cycle complex is controlled by an higher intersection product.\par
This observation and a loose analogy with symplectic geometry and Floer homology of Lagrangian submanifolds, lead to think that, for a cycle module with a ring structure $M$, the {\emph{cycle complex}} $C^*(X,M)$ of a  smooth scheme $X$ of finite type has a {\emph{natural $A_{\infty}$-algebra structure}}, that is an algebra structure which is only associative up to homotopy and higher homotopies. The aim of this paper is to show that it is indeed the case: our main result is the construction of a family of graded morphism of degree $2-n$ 
$$m^{\cap}_{X,n}:C^*(X,M)^{\otimes n}\ra C^*(X,M)$$
such that for each integer $n\geqslant 1$
$$\sum_{r+s+t=n}(-1)^{r+st}m^{\cap}_{X,r+1+t}\circ\left(\bfone^{\otimes r}\otimes m^{\cap}_{X,s}\otimes\bfone^{\otimes t}\right)=0,$$
where the sum runs over all nonnegative integers $r,s,t$ such that $r+s+t=n$. The map $m^{\cap}_{X,1}$ is the differential of the cycle complex and $m^{\cap}_{X,2}$ is a closed map which induces on cohomology the intersection product defined in \cite{RostChowCoeff}.
To get these {\emph{higher intersection products}} we not only use the deformation to the normal cone and its double variant, but also introduce higher versions of it to take into account all possible diagonal immersions. This {\emph{geometric construction}} done in the first section is designed to deform simultaneously to the normal cone a finite number of closed immersions. If cycle complexes were strongly homotopy invariant, that is if given any vector bundle $E$ of finite rank over $X$ the complex $C^*(E,M)$ of the bundle was isomorphic via pullback to $C^*(X,M)$, this geometric deformation would be enough to construct the $A_{\infty}$-algebra structure solely in terms of the four basic maps. Unfortunately the complex $C^*(E,M)$ is only a strong deformation retract of $C^*(X,M)$ and moreover there is no canonical retraction as soon as $E$ is not trivial: to get a retraction one has to choose a coordination of the bundle $E$, a variant of the usual notion of trivialization needed in \cite{RostChowCoeff} for technical reasons. To overcome this problem the idea is to replace $C^*(X,M) $ by a strong deformation retract that is a strongly homotopy invariant complex and still possesses the formalism of the four basic maps. This is done in section three where we also construct the $A_{\infty}$-algebra structure on this complex. The final step performed in section four consists in a descent along the retraction of the $A_{\infty}$-algebra structure previously obtained on the homotopy invariant cycle complex to the true cycle complex defined by M. Rost, a rather classical problem in {\emph{homological perturbation theory}}. We apply the perturbation lemma of \cite{MR0301736,MR1007895,MR1103672}, a method that has the advantage to give explicit formulas again in terms of the four basic maps.
\subsubsection*{Conventions}
In this work all the schemes are assumed to be separated, of finite type over a field $k$ and from section \ref{ModCycleSection} on they are assumed to be of pure dimension.  We refer to the appendix for our sign conventions and the definition of the tensor DG category $\catgrC$ of $\bbZ$-graded complexes of abelian groups. Coordinates play a role in the construction of \cite{RostChowCoeff} and some formulas of \emph{loc.cit.} do depend on them. So we fix some coordinates $t_1,\ldots,t_n$ on $\bbA^n$, in other words we write $\bbA^n=\Spec(k[t_1,\ldots,t_n])$. Given nonnegative integers $r,s,t$ such that $r+s+t=n$, we denote by $\bbA^r\times_k\{0\}^{s}\times_k\bbA^t$ the closed subscheme $\Spec(k[t_1,\ldots,t_r,t_{r+s+1},\ldots,t_{n}])$ of $\bbA^n$. We identify this scheme with $\bbA^{r+t}$ via the isomorphism of $k$-algebras
$$k[t_1,\ldots,t_r]\ra k[t_1,\ldots,t_r,t_{r+s+1},\ldots,t_n])$$
such that $t_1\mapsto t_1, \ldots, t_r\mapsto t_r$ and $t_{r+1}\mapsto t_{r+s+1},\ldots, t_{r+t}\mapsto t_n$.
\section{Higher deformations to the normal cone} 
Deformation to the normal cone is one of the basic geometrical tool needed to define Gysin maps, the fundamental operation of intersection theory as developed in \cite{Fulton} or \cite{RostChowCoeff}. We refer also to \cite{GilletHandbook} for a nice and short survey of intersection theory. More generally is plays an important role in the definition of various {\emph{specialization maps}} and for example it is a basic ingredient in the microlocal theory of sheaves developed by M. Sato and M. Kashiwara, P. Schapira \cite{MR1074006}: the microlocalization functor is the Fourier transform\footnote{In the sense of Sato and Malgrange-Verdier.} of a specialization functor provided by a space of deformation to the normal cone. The proof of functoriality for Gysin maps in \cite{RostChowCoeff} is achieved  through a space of double deformation to the normal, this is also the approach used by M. Levine in \cite[Part I, Chapter III,\S 2]{MR1623774}. This space allows to deform simultaneously two closed immersions, and similar deformation spaces appears also in the work of K. Takeuchi \cite[\S 2.2]{MR1382804} on bimicrolocalization.\par
In this section we define {\emph{spaces of deformation to the normal cone}} that allow to {\emph{deform simultaneously a finite number of closed immersions}}. Deformations with a similar flavour were considered in \cite{Delort} to study some ramified Cauchy problems within  the microlocal theory of sheaves. In this paper J.-M. Delort develops a notion of simultaneous microlocalisation of a complex of sheaves on a product of real analytic manifolds along a product of real analytic closed submanifolds. 
\subsection{Classical deformation to the normal cone}
 Let us first start by recalling the construction of the deformation to the normal cone $D(X,Y)$ of a closed immersion $\iota:Y\hookrightarrow X$. It is the open complement of the closed subscheme $\Blw_YX$ in the blow-up $D_YX$ of the closed immersion $Y\times_k\{0\}\hookrightarrow X\times_k\bbA^1$. Let $\scr I$ be the ideal of $\Osheaf_X$ corresponding to $Y$. The ideal of definition of $Y\times_k\{0\}$ in $X\times_k\bbA^1$ being
$$(\scr I,t)=\scr I+t\Osheaf_X[t],$$
its $n$-th power is given by
$$(\scr I,t)^n=\scr I^n+\scr I^{n-1}t+\cdots+\scr It+t^n\Osheaf_X[t] $$
and so our blow-up $D_YX$ is the scheme $\Proj((\scr I,t)^n)$. 
The open complement of $\Blw_YX$ in $D_YX$ is then the spectrum of the quasi-coherent $\Osheaf_X[t]$-algebra
$$\scr A:=\bigoplus_{n\in\bbZ}\scr I^nt^{-n}\subset \Osheaf_X[t,t^{-1}]$$
with the convention that $\scr I^n=\Osheaf_X$ for nonpositive $n$. We have $\scr A[t^{-1}]= \Osheaf_X[t,t^{-1}] $ and so a diagram
$$\xymatrix{{} & {X\times_k\bbG_m}\ar[d]\ar[ld]\ar@/^3em/@{=}[dd]\\
{D(X,Y)}\ar[d] & {D(X,Y)|_{\bbG_m}}\ar[l]\ar[d]\ar@{}[ld]|{\square}\\
{X\times_k\bbA^1} & {X\times_k\bbG_m.}\ar[l]} $$
On the other hand\footnote{Here we do not put the sign in the identification as in \cite[\S 10.5]{RostChowCoeff}} $\scr A/t\scr A=\oplus_n\scr I^n/\scr I^{n+1}$ and the fiber over $\{0\}$ of the deformation space is thus the normal cone $C_YX$. We finally have the following diagram
$$\xymatrix{{D(X,Y)|_{\bbG_m}}\ar[r]\ar[d]_{\textrm{iso.}}\ar@{}[rd]|{\square} & {D(X,Y)}\ar[d]\ar@{}[rd]|{\square} & {D(X,Y)|_0}\ar[l]\ar[d] & {C_YX}\ar@{=}[l]\ar[d]\\
{X\times_k\bbG_m}\ar[r] & {X\times_k\bbA^1} & {X}\ar[l] & {Y.}\ar[l]}$$
\subsection{Higher deformation to the normal cone}
The deformation space we introduce in this paragraph is a higher analog of the symmetric double deformation space considered by M. Rost in \S 10.6 of \cite{RostChowCoeff}.
\subsubsection{}
Let $Y$ be a scheme of finite type over $k$. We want to deform to their normal cones simultaneously a family of closed immersions in $Y$:
$$\sigma_1:Y_1\hookrightarrow Y,\ldots,\sigma_n:Y_n\hookrightarrow Y$$
Let $\scr I_i\subset \Osheaf_Y$ be the ideal defining the closed subscheme $Y_i$ in $Y$. Then
$$\scr A:=\bigoplus_{(\alpha_1,\ldots,\alpha_n)\in\bbZ^n}\scr I^{\alpha_1}_1\cdots\scr I^{\alpha_n}_nt_1^{-\alpha_1}\cdots t_n^{-\alpha_n} $$
is a quasi-coherent $\Osheaf_Y[t_1,\ldots,t_n]$-algebra contained in $\Osheaf_Y[t_1,t_1^{-1},\ldots,t_n,t_n^{-1}]$. The simultaneous deformation to the normal cone of the immersions $\sigma_1,\ldots,\sigma_n$ is the affine scheme over $Y\times_k\bbA^n$, corresponding to this quasi-coherent  $\Osheaf_Y[t_1,\ldots,t_n]$-algebra:
$$D(Y,Y_1,\ldots,Y_n):=\Spec\left(\bigoplus_{(\alpha_1,\ldots,\alpha_n)\in\bbZ^n}\scr I^{\alpha_1}_1\cdots\scr I^{\alpha_n}_nt_1^{-\alpha_1}\cdots t_n^{-\alpha_n} \right).$$
Since $\scr A[t_1^{-1},\ldots,t_n^{-1}]=\Osheaf_Y[t_1,t_1^{-1},\ldots,t_n,t_n^{-1}]$, we have the following commutative diagram:
$$\xymatrix{{} & {Y\times{\bbG_m}^{\kern-.8em n}}\ar[d]\ar[ld]\ar@/^6em/@{=}[dd]\\
{D(Y,Y_1,\ldots,Y_n)}\ar[d] & {D(Y,Y_1,\ldots,Y_n)|_{\bbG^n_m}}\ar[l]\ar[d]\ar@{}[ld]|{\square}\\
{Y\times_k\bbA^n} & {Y\times_k{\bbG_m}^{\kern-.8em n}\;.}\ar[l]} $$
\subsubsection{}\label{FuncDefNorm}
The functorial properties of the simultaneous deformation to the normal cone are similar to the functorial properties of the more conventional deformation to the normal cone. Let $Z$ be a $k$-scheme and 
$$\tau_1:Z_1\hookrightarrow Z,\ldots,\tau_n:Z_n\hookrightarrow Z,$$
be a family of closed immersions. Consider a family of $k$-morphisms $f:Z\ra Y$ and $f_i:Z_i\ra Y_i$ such that the squares
\begin{equation}\label{squarefunc}
\begin{tabular}{c}
$$
\xymatrix{{Z_i}\ar[r]^{\tau_i}\ar[d]^{f_i} & {Z}\ar[d]^{f}\\
{Y_i}\ar[r]^{\sigma_i} & {Y}}$$
\end{tabular}
\end{equation}
commute for any $i\in\{1,\ldots,n\}$. Let $\scr K_i$ be the ideal of $\Osheaf_Z$ which defines the closed subscheme $Z$. The natural morphisms $f^*\scr I_i\ra \scr K_i$ of $\Osheaf_Z$-modules induce a morphism of quasi-coherent $\Osheaf_Z[t_1,\ldots,t_n]$-algebras
\begin{equation}\label{MorFuncAlg}
f^*\left(\bigoplus_{(\alpha_1,\ldots,\alpha_n)\in\bbZ^n}\scr I^{\alpha_1}_1\cdots\scr I^{\alpha_n}_nt_1^{-\alpha_1}\cdots t_n^{-\alpha_n}\right)\ra \bigoplus_{(\alpha_1,\ldots,\alpha_n)\in\bbZ^n}\scr K^{\alpha_1}_1\cdots\scr K^{\alpha_n}_nt_1^{-\alpha_1}\cdots t_n^{-\alpha_n},
\end{equation}
and thus a morphism of $Z\times_k\bbA^n$-schemes
$$D'(f,f_1,\ldots,f_n):D(Z,Z_1,\ldots,Z_n)\ra D(Y,Y_1,\ldots,Y_n)|_{Z\times_k\bbA^n}.$$
So the functoriality of the simultaneous deformation space is summed up by the following commutative diagram:
$$\xymatrix@C=1.5cm{{D(Z,Z_1,\ldots,Z_n)}\ar[r]^{D'(f,f_1,\ldots,f_n)}\ar[rd]\ar@/^2em/[rr]^{D(f,f_1,\ldots,f_n)} & {D(Y,Y_1,\ldots,Y_n)|_{Z\times_k\bbA^n}}\ar[r]\ar[d]\ar@{}[rd]|{\square} & {D(Y,Y_1,\ldots,Y_n)}\ar[d]\\
{} & {Z\times_k\bbA^n}\ar[r]^{f\times_k\Id} & {Y\times_k\bbA^n.}}$$
Assume that the squares (\ref{squarefunc}) are cartesian. Then the morphisms $f^*\scr I_i\ra \scr K_i$ are surjective and the morphism (\ref{MorFuncAlg}) is surjective as well. Therefore under the assumption that the squares are cartesian, the morphism $D'(f,f_1,\ldots,f_n) $ is a closed immersion.
\subsubsection{}
For short we will denote by $\scr D$ the deformation space we have just defined, and we will denote also by $t_1,\ldots,t_n\in\Gamma(\scr D,\Osheaf_{\scr D})$ the global functions defined by the chosen coordinates on $\bbA^n$. We let $r,s,t$ be nonnegative integers such that $n=r+s+t$. The closed subscheme of $\scr D$ defined by the equations $t_{r+1}=\cdots=t_{r+s}=0$, is the affine scheme over $\bbA^r\times\{0\}^s\times\bbA^t$:
$$\scr D|_{\bbA^r\times\{0\}^s\times\bbA^t}=\Spec\left(\bigoplus_{(\alpha_1,\ldots,\alpha_n)\in\bbZ^n}\dfrac{\scr I^{\alpha_1}_1\cdots\scr I^{\alpha_n}_n}{\sum_{i=r+1}^{r+s}\scr I^{\alpha_1}_1\cdots\scr I_{i}^{\alpha_i+1}\cdots\scr I^{\alpha_n}_n}t_1^{-\alpha_1}\cdots t_r^{-\alpha_r}t_{r+s+1}^{-\alpha_{r+s+1}}\cdots t_n^{-\alpha_n} \right).$$
To go further and identify this closed subscheme of $\scr D$ itself to some higher deformation space to the normal cone, we need to assume that our closed immersions are transverse enough. A rather strong transversality condition is enough for our purpose and we have chosen to phrase it in terms of local coordinates:
\begin{enumerate}
\item[\bf T:]{locally on $Y$ for the Zariski topology there exists an integer $N\geqslant 1$, an \'etale morphism $$Y\ra\bbA^N=\Spec(k[t_1,\ldots,t_N]),$$ and a partition $1=\ell_0<\cdots<\ell_{n+1}=N$ such that for all $i\in\{1,\ldots,n\}$, the closed subscheme $Y_i$ of $Y$ is defined by the equations $t_{\ell_{i-1}}=\cdots=t_{\ell_i}=0$.}
\end{enumerate}
\begin{rema}\label{RemaT} Under the transversality assumption $\bf{T}$, the closed immersion $\sigma_1,\ldots,\sigma_n$ are regular and moreover for any $(\alpha_1,\ldots,a_n)\in\bbZ^n$ we have
$$\scr I_1^{\alpha_1}\cdots\scr I^{\alpha_n}_n=\scr I_1^{\alpha_1}\cap\cdots\cap\scr I^{\alpha_n}_n.$$
Remark that, if condition $\bf{T}$ holds for the closed immersions $\sigma_1,\ldots,\sigma_n$, it still holds for the closed immersions obtained after a smooth base change on $Y$.
\end{rema}
From now on, assume that condition $\bf{T}$ is satisfied. Let us choose an integer $i\in\{1\;\ldots,n\}$. Then the closed subscheme of $\mathscr D$ defined by $t_i=0$ is  
$$ \scr D|_{\bbA^{i-1}\times\{0\}\times\bbA^{n-i}}=\Spec\left(\bigoplus_{(\alpha_1,\ldots,\alpha_n)\in\bbZ^n}\dfrac{\scr I^{\alpha_1}_1\cdots\scr I^{\alpha_n}_n}{\scr I^{\alpha_1}_1\cdots\scr I_{i}^{\alpha_i+1}\cdots\scr I^{\alpha_n}_n}t_1^{-\alpha_1}\cdots t_{i-1}^{-\alpha_{i-1}}t_{i+1}^{-\alpha_{i+1}}\cdots t_n^{-\alpha_n} \right).$$
By remark \ref{RemaT}, the transversality assumption $\bf{T}$ implies that
\begin{equation*}
\begin{split}
(\scr I^{\alpha_1}_1\cdots\scr I^{\alpha_n}_n)\cap\scr I_i^{\alpha_i+1} & =(\scr I^{\alpha_1}_1\cap\cdots\cap\scr I^{\alpha_n}_n)\cap\scr I_i^{\alpha_i+1}=\scr I^{\alpha_1}_1\cap\cdots\cap\scr I_{i}^{\alpha_i+1}\cap\cdots\cap\scr I^{\alpha_n}_n  \\
&= \scr I^{\alpha_1}_1\cdots\scr I_{i}^{\alpha_i+1}\cdots\scr I^{\alpha_n}_n,
\end{split} 
\end{equation*}
and so we have a natural isomorphism
$$\dfrac{\scr I^{\alpha_1}_1\cdots\scr I^{\alpha_n}_n}{\scr I^{\alpha_1}_1\cdots\scr I_{i}^{\alpha_i+1}\cdots\scr I^{\alpha_n}_n}=\dfrac{\scr I^{\alpha_1}_1\cdots\scr I^{\alpha_n}_n+\scr I_i^{\alpha_i+1}}{\scr I_{i}^{\alpha_i+1}}.$$
\begin{rema}\label{RemaNC}
Let $Z\hookrightarrow Y$ be a closed immersion and $\scr I\subset\Osheaf_Y$ the ideal of definition of $Z$ in $Y$. The normal cone of $Z$ in $Y$ is the affine scheme over $Z$
$$C_ZY=\Spec\left(\bigoplus_{m\in\bbZ}\dfrac{\scr I^m}{\scr I^{m+1}}\right).$$
Let $W\hookrightarrow Y$ be another closed immersion defined by an ideal $\scr K\subset\Osheaf_Y$. Then $C_ZY|_{W\times_YZ}$ is the closed subscheme of $C_ZY$ defined by the ideal of $\Osheaf_{C_ZY}$
$$\widehat{\scr K}=\bigoplus_{m\in\bbZ}\dfrac{\scr K\scr I^m+\scr I^{m+1}}{\scr I^{m+1}}.$$
This construction is compatible with the product of ideals: if $\scr L\subset\Osheaf_Y$ is an ideal, then $\widehat{\scr K\cdot\scr L}=\widehat{\scr K}\cdot\widehat{\scr L} $. In particular the $n$-th power of $\widehat{\scr K}$ is 
$${\widehat{\scr K}}^n=\bigoplus_{m\in\bbZ}\dfrac{\scr K^n\scr I^m+\scr I^{m+1}}{\scr I^{m+1}}.$$
\end{rema}
By remark \ref{RemaNC} applied to the immersion $\sigma_i:Y_i\hookrightarrow Y$, we get an isomorphism of quasi-coherent $\Osheaf_{Y_i}[t_1,\ldots,t_{i-1},t_{i+1},\ldots,t_n]$-algebras between
$$\bigoplus_{(\alpha_1,\ldots,\alpha_n)\in\bbZ^n}\dfrac{\scr I^{\alpha_1}_1\cdots\scr I^{\alpha_n}_n}{\scr I^{\alpha_1}_1\cdots\scr I_{i}^{\alpha_i+1}\cdots\scr I^{\alpha_n}_n}t_1^{-\alpha_1}\cdots t_{i-1}^{-\alpha_{i-1}}t_{i+1}^{-\alpha_{i+1}}\cdots t_n^{-\alpha_n}  $$
and 
$$\bigoplus_{(\alpha_1,\ldots,\alpha_{i-1},\alpha_{i+1}\cdots\alpha_n)\in\bbZ^n}\widehat{\scr I_1}^{\alpha_1}\cdots\widehat{\scr I_{i-1}}^{\alpha_{i-1}}\widehat{\scr I_{i+1}}^{\alpha_{i+1}}\cdots\widehat{\scr I_n}^{\alpha_n}t_1^{-\alpha_1}\cdots t_{i-1}^{-\alpha_{i-1}}t_{i+1}^{-\alpha_{i+1}}\cdots t_n^{-\alpha_n}  $$
We thus get an isomorphism over $Y_{i}\times_k\bbA^{n-1}$ between $\scr D|_{\bbA^{i-1}\times\{0\}\times\bbA^{n-i}}$ and the deformation space
$$D(N(Y,Y_i),N(Y,Y_i)|_{Y_1\times_YY_i},\ldots,N(Y,Y_i)|_{Y_{i-1}\times_YY_i},N(Y,Y_i)|_{Y_i\times_YY_{i+1}},\ldots,N(Y,Y_i)|_{Y_i\times_YY_n}) .$$
Let $Y_{r,s,t}$ be the closed subscheme of $Y$ defined by
$$Y_{r,s,t}:=Y_{r+1}\times_Y\cdots\times_YY_{r+s}.$$
Consider the vector bundle $N_{Y}^{r,s,t}$ over $Y_{r,s,t}$ defined inductively by
$$N_{Y}^{r,s,t}:=\begin{cases} N(Y,Y_{r+1}) & \textrm{if $s=1$}\\
N\left(N_{Y}^{r+1,s-1,t},N_{Y}^{r+1,s-1,t}|_{Y_{r,s,t}}\right)& \textrm{otherwise.}\end{cases} $$
Let $\scr D^{r,s,t}$ be the deformation space
$$\scr D^{r,s,t}:=D\left(N_{Y}^{r,s,t},N_{Y}^{r,s,t}|_{Y_1\times_YY_{r,s,t}},\ldots, N_{Y}^{r,s,t}|_{Y_r\times_YY_{r,s,t}},N_{Y}^{r,s,t}|_{Y_{r,s,t}\times_YY_{r+s+1}},\ldots,N_{Y}^{r,s,t}|_{Y_{r,s,t}\times_YY_n}\right) $$
Since condition  $\bf{T}$ is stable by smooth base change on the target, by induction we get an isomorphism between $\scr D|_{\bbA^r\times\{0\}^s\times\bbA^t} $ and $\scr D^{r,s,t}$ over $Y_{r,s,t}\times_k\bbA^{r+t}$. In the sequel we identify these two deformation spaces via this isomorphism.
\subsection{Deformation of the diagonals}\label{HighDefn}
Now we consider the special case of the diagonal closed immersions. Let $X$ be a smooth scheme of finite type over $k$ and $n\geqslant 2$ be an integer. We consider the following closed immersions
$$\delta^i_{X,n}:=\Id_{X}^{i-1}\times_k\Delta_X\times_k\Id_X^{n-1-i}:X^{n-1}\hookrightarrow X^n\qquad i\in\{1,\ldots,n-1\}$$
where $\Delta_X:X\hookrightarrow X\times_k X$ is the diagonal immersion. We denote by $\scr D_{X,n}$ the corresponding space of simultaneous deformation to the normal cone which is an affine scheme over $X^n\times_k\bbA^{n-1}$. In the sequel we denote by $\omega_{X,n}$ the projection map from $\scr D_{X,n}$ to $X^n\times_k\bbA^{n-1}$. 
Since $X$ is smooth, condition {\bf{T}} holds for this family of closed immersions. By construction the projection map $\omega_{X,n}$ induces an isomorphism
$$\scr D_{X,n}|_{\bbG_m^{n-1}}\simeq X^n\times_k\bbG_m^{n-1}.$$
Throughout the sequel we will identify these two schemes. Moreover the fiber over $\{0\}^{n-1}$ is a vector bundle $\scr N_{X,n}$ over $X$ seen as a closed subscheme of $X^n$ via the diagonal immersion. As before, let $r,s,t$ be nonnegative integers such that $r+s+t=n$, and let $u=r+1+t$. With the notation of the previous subsection, we have
$$(X^n)_{r,s,t}=X^u\hookrightarrow X^n$$
with the immersion given by $\id_{X^r}\times_k\Delta_{X,s}\times_k\Id_{X^t}$, where $\Delta_{X,s}:X\hookrightarrow X^s$ is the diagonal immersion. We also have for $i\in\{1,\ldots,r\}$
$$\xymatrix{{X^{n-1}\times_{\delta^i_{X,n},X^n}(X^n)_{r,s,t}}\ar@{=}[d]\ar[r]  & {(X^n)_{r,s,t}\ar@{=}[d]}\\
{X^{u-1}}\ar[r]^{\delta^i_{X,u}}& {X^u}} $$
and for $i\in\{r+s+1,\ldots,n-1\}$
$$\xymatrix{{(X^n)_{r,s,t}\times_{X^n,\delta^i_{X,n}}X^{n-1}}\ar@{=}[d]\ar[r]  & {(X^n)_{r,s,t}\ar@{=}[d]}\\
{X^{u-1}}\ar[r]^{\delta^i_{X,u}}& {X^u}} $$
We set $\scr N^{r,s,t}_{X,n}:=N^{r,s,t}_{X^n}$. Using the functorial properties of the simultaneous deformation to the normal cone given in \S \ref{FuncDefNorm} we see from the two previous squares and the construction of the deformation space $\scr D^{r,s,t}_{X,n}$, that we a natural morphism $\varpi^{r,s,t}_{X,n}:\scr D^{r,s,t}_{X,n}\ra \scr D_{X,u} $ of schemes over $X^u\times_k\bbA^{u-1}$, which fits into the commutative diagram:
$$\xymatrix{{\scr N_{X,n}}\ar[rr]\ar[dd]\ar@{.>}[rd] & {} & {\scr D^{r,s,t}_{X,n}}\ar[rd]^{\omega^{r,s,t}_{X,n}}\ar[dd]_{\varpi^{r,s,t}_{X,n}} & {}\\
{} & {X^u\times_k\{0\}^{u-1}}\ar@{.>}[rr] & {} & {X^u\times_k\bbA^{u-1}}\\
{\scr N_{X,u}}\ar[rr]\ar@{.>}[ru] & {} & {\scr D_{X,u}}\ar[ru]_{\omega_{X,u}} & {}} $$
in which the squares are cartesian. Moreover via the first vertical arrow $\scr N_{X,n}$ is a vector bundle of finite rank over $\scr N_{X,u}$.
We also have a morphism $\varpi^n_{X,s}$ which fits into the following square:
$$\xymatrix{{\scr D_{X,n}}\ar[r]^{\omega_{X,n}}\ar[d]^{\varpi^n_{X,s}} & {X^n\times_k\bbA^{n-1}}\ar[d]\\
{X^r\times_k\scr D_{X,s}\times_k X^t\times_k\bbA^r\times_k\bbA^t}\ar[r]_{\raisebox{-1.1em}{\footnotesize$\Id_{X^r}\times_k\omega_{X,s}\times_k\Id_{\bbA^r}\times_k\Id_{\bbA^t}$}} & {X^r\times_k X^s\times_k\bbA^{s-1}\times_k X^t\times_k\bbA^r\times_k\bbA^t}} $$
where the second vertical arrow is given by the permutation of factors.
\section{Cycle complexes and the four basic maps}\label{ModCycleSection}
In this section we recall shortly the definition of a cycle module and the four basic operations on the associated cycle complex. For the particular need of this work we have to consider boundary morphisms for generalized boundary triples where the open subset is not necessarily the open complement of the closed subset but may be strictly smaller. Those {\emph{weak boundary morphisms}} share all the properties of usual boundary morphisms except they are not anymore closed as graded maps. 
\subsection{Cycle modules and cycle complexes}
Let $M_*$  be a functor from the category of finitely generated field extensions of $k$ to the category of graded abelian groups. Given a finitely generated extension $F|k$, we will simply denote by $M_*(F)$ the value of  $M_*$ at the extension $F|k$. For finitely generated extensions $F|k$ and $E|F$, we denote by 
$$\Res_{E|F}: M_*(F)\ra M_*(E)$$
the structural morphism, also called the \emph{restriction morphism}, which is by definition a morphism of degree $0$.\par
A ($\bbZ$-graded) \emph{cycle premodule} is a functor $M_*$ from the category of finitely generated field extension of $k$ to the category of graded abelian groups together with the following additional data:
\begin{enumerate}
\item[\bf D2:]{for any finite extension $E|F$ of finitely generated field extensions of $k$ a morphism of degree $0$
$$\Cores_{E|F}:M_*(E)\ra M_*(F)$$
called the \emph{corestriction morphism} or norm map;}
\item[\bf D3:]{for any finitely generated extension $F|k$ a structure  on $M_*(F)$ of left-graded module over the Milnor ring $\KMiln_*(F)$;}
\item[\bf D4:]{for any finitely generated extension $F|k$ and each geometric\footnote{As far as valuations are concerned we will use the conventions and vocabulary of \cite[chapter VI]{MR0120249} as in \cite{RostChowCoeff}; they are not consistent with Bourbaki's conventions \cite{MR0194450}. Recall that for a discrete valuation $v$ of rank $n$ on a finitely generated extension $F|k$, we have the inequality 
$$n+\mathrm{tr.deg}_k(\kappa(v))\leqslant\mathrm{tr.deg}_k(F) .$$
The valuation $v$ is said to be geometric when this inequality is an equality. We refer to \cite{MR2427051} for some useful properties of geometric valuations not proved in \cite{RostChowCoeff}.}  discrete valuation $v$ of rank 1 on $F|k$, a morphism
$$\partial_v:M_*(F)\ra M_*(\kappa(v)) $$
of degree $-1$, called the \emph{residue morphism}.}
\end{enumerate}
All these morphisms also satisfy various compatibility conditions as given in \cite[\S 1]{RostChowCoeff} and we will freely refer to these rules in the sequel. Let $M_*$ be a cycle premodule. Let $X$ be a \emph{normal} irreducible scheme of finite type over $k$ and $x$ be a codimension one point. The function field of $X$ is then the residue field $\kappa(\eta)$ at the generic point $\eta$ of $X$ and the local ring $\Osheaf_{X,x}$ is a discrete valuation ring of rank $1$ with fraction field $\kappa(\eta)$ and residue field $\kappa(x)$, the corresponding valuation is geometric and axiom {\bf{D4}} provides thus a residue morphism of degree $-1$:
$$\partial_x:M_*(\kappa(\eta))\ra M_*(\kappa(x)).$$
Now let $X$ be any $k$-scheme of finite type and $x$ be a point in $X$. The normalisation of the integral subscheme $\ol{\{x\}}$ is an integral normal scheme finite and birational over $\ol{\{x\}}$. Over any codimension one point $y$ of in $\ol{\{x\}}$ there lies finitely many point in the normalisation and they are all of codimension one. For any such point $z$ above $y$, we have a residue morphism
$$\partial_z:M_*(\kappa(x))\ra M_*(\kappa(z))$$
and so a well defined morphism of degree $-1$ from $M_*(\kappa(x))$ to $M_*(\kappa(y))$
\begin{equation}\label{Defdifferential}
\partial^x_y=\sum_{z|y}\Cores_{\kappa(z)|\kappa(y)}\circ\partial_z.
\end{equation}
When $y$ is point in $X$ which is not of codimension one in $\ol{\{x\}}$ we simply set $\partial^x_y=0$.
A cycle module $M_*$ is a cycle premodule which satisfies the following two conditions:
\begin{enumerate}
\item[(\bf FD)]{for any irreductible normal scheme of finite type over $k$ and any element $m\in M_*(\kappa(\eta))$, the set $$\{x\in X^{(1)}: \partial^\eta_x(m)\neq 0\}$$ is finite where $\eta$ is the generic point of $X$;}
\item[(\bf C)]{for any integral local 2-dimensional scheme $X$ we have
$$\sum_{x\in X^{(1)}}\partial^{x}_s\circ\partial^\eta_x=0 $$
where $\eta$ is the generic point and $s$ the closed point of $X$.}
\end{enumerate}
Our main result concerns smooth schemes of finite type over $k$, so to simplify notation, we use from the beginning the codimension indexing for all Rost cycle complexes. We will therefore assume all schemes to be of pure dimension over $k$. Let $M_*$ be a cycle module and $X$ be a scheme of finite type over $k$. We let 
$$C^p(X,M,n)=\bigoplus_{x\in X^{(p)}}M_{n-p}(\kappa(x))$$
and define a morphism
$$d:C^p(X,M,n)\ra C^{p+1}(X,M,n)$$
such that the component along $x\in X^{(p)}$ and $y\in X^{(p+1)}$ is given by the morphism $\partial^{x}_{y}$. The definition has sense thanks to condition {\bf FD} and from {\bf C} it follows that $d$ is a differential. Thus for each integer $n$, we obtain a complex of abelian groups $C^*(X,M,n)$ and we denote by $C^*(X,M)$ the associated graded complex of abelian groups:
$$C^*(X,M):=\bigoplus_{n\in\bbZ}C^*(X,M,n).$$
This complex is $\bbZ$-graded by definition and so belongs to the category $\catgrC$.
\subsection{Operations on cycle complexes}
The \emph{four basic maps} on cycle complexes introduced in \cite[\S 3]{RostChowCoeff} are pushforwards, pullbacks, boundary morphisms, and multiplications by units. The fifth important operation is the product map defined for a cycle module with a ring structure such as for example Milnor K-theory. 
\subsubsection{}\label{FourMaps}
Let $X,Y$ be schemes of finite type over $k$ and $f:Y\ra X$ be a morphism of constant relative dimension $r$. 
\begin{enumerate}
\item{The \emph{pushforward} $f_*:C^*(Y,M)\ra C^*(X,M)$ is the morphism in $\catgrC$ defined by the morphisms of abelian groups 
$$f_*:C^p(Y,M,n)\ra C^{p-r}(X,M,n-r)$$
whose components are given by 
$$(f_*)^y_x=\begin{cases} \Cores_{\kappa(y)|\kappa(x)} & \textrm{if $x=f(y)$ and $\kappa(y)|\kappa(x)$ is finite} \\
0 & \textrm{otherwise.} \end{cases}$$
The pushforward is a graded morphism of degree $(-r,r)$, moreover as proved in \cite[Proposition 4.6 (1)]{RostChowCoeff} it is \emph{closed} when $f$ is \emph{proper}.}
\item{Assume that $f$ is flat. The \emph{pullback} $f^*:C^*(X,M)\ra C^*(Y,M)$ is the morphism in $\catgrC$ defined by the morphisms of abelian groups 
$$f^*:C^p(X,M,n)\ra C^{p}(Y,M,n)$$
whose components are given by 
$$(f^*)^x_y=\begin{cases} \lg\left(\Osheaf_{f^{-1}(Z),y}\right)\cdot\Res_{\kappa(y)|\kappa(x)} & \textrm{if $x=f(y)$} \\
0 & \textrm{otherwise} \end{cases}$$
where $Z=\overline{\{x\}}$ with its reduced scheme structure. The pullback is a \emph{closed} graded morphism of degree $(0,0)$ \cite[Proposition 4.6 (2)]{RostChowCoeff}.}
\item{Let $X$ be a $k$-scheme of finite type and $Z\hookrightarrow X$ be a closed immersion. Denote by $U$ its \emph{open complement}. Assume that $Z$ is of pure codimension $c$ in $X$. We have a  boundary triple $$U\hookrightarrow X\hookleftarrow Z$$
and by taking the $\partial^x_y$ defined in (\ref{Defdifferential}) we get the boundary map defined by M. Rost $\partial^U_Z:C^*(U,M)\ra C^*(Z,M)$ which is a \emph{closed} graded morphism in $\catgrC$ of degree $(1-c,c)$ \cite[Proposition 4.6 (4)]{RostChowCoeff}. }
\item{Let $X$ be scheme of finite type over $k$. We have the evaluation map
$$\Gamma(X,\Osheaf^{\times}_X)\ra\Osheaf_{X,x}^{\times}\ra\kappa(x)^{\times};a\mapsto a(x)$$
and so global units $a_1,\ldots,a_r\in\Gamma(X,\Osheaf_X^{\times})$ provide a symbol $\{a_1(x),\ldots,a_r(x)\}\in\KMiln_r(\kappa(x))$. We have then a graded morphism of bidegree $(0,r)$
$$\{a_1,\ldots,a_r\}:C^*(X,M)\ra C^*(X,M)$$ defined by
$$\{a_1,\ldots,a_r\}^x_y:=\begin{cases} \{a_1(x),\ldots,a_r(x)\}\cdot & \textrm{if $y=x$,}\\
$0$ & \textrm{otherwise}
\end{cases}$$
This morphism is \emph{closed} \cite[Proposition 4.6 (3)]{RostChowCoeff}. }
\end{enumerate}
Pushforwards and pullbacks are functorial. These four basic operations satisfy various formulas proved in \cite[\S 4]{RostChowCoeff}. We recall here without proof the formulas we will use:
\begin{enumerate}
\item{for each cartesian square
$$\xymatrix{{X'}\ar[r]^{g'}\ar[d]^{f'}\ar@{}[rd]|{\square} & {Y'}\ar[d]^{f}\\
{X}\ar[r]^{g} & {Y}} $$
with $f$ proper and $g$ flat both of constant relative dimension, we have $g^*\circ f_*=f'_*\circ g'^* $ \cite[Proposition 4.1 (3)]{RostChowCoeff};}
\item{let $f:Y\ra X$ be a flat morphism of constant relative dimension $r$ and $a\in\Gamma(X,\Osheaf^{\times}_X)$ a unit, we have $f^*\circ\{a\}=\{f^*a\}\circ f^*$ \cite[Lemma 4.3 (2)]{RostChowCoeff};}
\item{given a boundary triple $U\hookrightarrow X\hookleftarrow Z$ and a unit $a\in\Gamma(X,\Osheaf^{\times}_X)$, we have $\partial^U_Z\circ\{j^*a\}=-\{i^*a\}\circ\partial^U_Z$ where $j:U\hookrightarrow X$ is the open immersion and $i:Z\hookrightarrow X$ is the closed immersion \cite[Lemma 4.3 (2)]{RostChowCoeff};}
\item{let $f:Y\ra X$ be a flat morphism of constant relative dimension and  $U\hookrightarrow X\hookleftarrow Z$ be a boundary triple, consider the induced boundary triple and cartesian diagram
$$\xymatrix{{V}\ar@{^(->}[r]\ar@{}[rd]|{\square}\ar[d]^{f'} & {Y}\ar[d]^{f}\ar@{}[rd]|{\square} & {W}\ar@{_(->}[l]\ar[d]^{f''}\\
{U}\ar@{^(->}[r] & {X} & {Z,}\ar@{_(->}[l]} $$
we have $f''^*\circ\partial^U_Z=\partial^V_W\circ f'^*$ \cite[Proposition 4.4 (2)]{RostChowCoeff}.}
\end{enumerate}
\subsubsection{}\label{WeakBoundary}
To construct the intersection $A_{\infty}$ on the cycle complex of a smooth scheme, we need to consider boundary morphisms for slightly generalized boundary triples (called {\emph{weak boundary triples}} in the sequel) where the open subset $U$ is only assumed to be contained in the open complement of the closed subset $Z$. The associated {\emph{boundary morphism}}
$$\partial^U_Z:C^*(U,M)\ra C^*(Z,M)$$
is defined in exactly the same way. It is still a morphism in $\catgrC$ of bidegree $(1-c,c)$ where $c$ is the codimension of $Z$ in $X$ and formulas still hold for weak boundary triples with exactly the same proof. However this graded morphism is {\emph{not closed}} in general as soon as $U$ is strictly contained in the open complement of $Z$.
\subsubsection{}\label{ParDefProduct}
Let $M$ be a cycle module with ring product $\times_{\mu}$, and $X,Y$ be $k$-schemes of finite type. The product is a map in $\catgrC$ of degree $(0,0)$ 
$$\times_{\mu}:C^*(X,M)\otimes C^*(Y,M)\ra C^*(X\times_k Y,M).$$
This graded morphism is given by a collection of maps of abelian groups
$$C^p(X,M,n)\otimes_{\bbZ} C^q(Y,M,m)\ra C^{p+q}(X\times_k Y,M,n+m).$$
These maps are defined as follows. Let $x$ be a point of codimension $p$ in $X$, $y$ be a point of codimension $q$ in $Y$, and take elements $a\in M_{n-p}(\kappa(x))$ and $b\in M_{m-q}(\kappa(y))$. Let $Z=\overline{\{x\}}$ and $W=\overline{\{y\}}$ with their reduced scheme structure. Then $a\times_{\mu}b$ is the element in $C^{p+q}(X\times_k Y,n+m)$ with components
$$(a\times_{\mu}b)_u=\begin{cases} \lg(\Osheaf_{Z\times_k W,u})\cdot\Res_{\kappa(u)|\kappa(x)}(a)\cdot\Res_{\kappa(u)|\kappa(y)}(b) & \textrm{if $u$ is a generic point of $Z\times_k W$;}\\
0 & \textrm{otherwise.}\end{cases}$$
\begin{rema}
It follows from the definition that the product is associative. In the case of Milnor K-theory it is also compatible with the classical product on Chow groups.
\end{rema}
For sake of completeness we include here a proof that the product map is \emph{closed}, this corrects the wrong sign in chain rule 14.4 of \cite{RostChowCoeff}.
\begin{lemm}\label{LemmaChainRule}
The product morphism
$$\times_{\mu}:C^*(X,M)\otimes C^*(Y,M)\ra C^*(X\times_k Y,M)$$
is closed in $\catgrC$.
\end{lemm}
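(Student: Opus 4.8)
The plan is to reduce the statement to the pointwise chain rules already established by Rost for the four basic maps, combined with the sign conventions of the tensor DG category $\catgrC$. Recall that in $\catgrC$ the differential on a tensor product $C^*(X,M)\otimes C^*(Y,M)$ is given by the graded Leibniz rule $d_{X\times_k Y}\circ(\alpha\otimes\beta) = (d_X\alpha)\otimes\beta + (-1)^{|\alpha|}\alpha\otimes(d_Y\beta)$, so that what must be verified is the identity
$$d_{X\times_k Y}\circ\times_\mu = \times_\mu\circ\left(d_X\otimes\bfone + \bfone\otimes d_Y\right)$$
of graded morphisms, where the signs are carried by the Koszul rule for interchanging $d$ (degree $(1,0)$) with the first tensor factor.

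First I would fix a point $x\in X^{(p)}$, a point $y\in Y^{(q)}$, and elements $a\in M_{n-p}(\kappa(x))$, $b\in M_{m-q}(\kappa(y))$, and set $Z=\overline{\{x\}}$, $W=\overline{\{y\}}$ with reduced structure. The task is then to compare the $(p+q+1)$-codimensional components of $\partial^x_{(-)}$ applied to $a\times_\mu b$ against the components coming from $(\partial^x_{(-)}a)\times_\mu b$ and $a\times_\mu(\partial^y_{(-)}b)$. Both sides are supported on codimension one points of $Z\times_k W$ (equivalently, on the generic points of $Z'\times_k W$ for $Z'$ a codimension one subvariety of $Z$, or $Z\times_k W'$ for $W'$ codimension one in $W$; no "mixed" components occur because a codimension one point of $Z\times_k W$ dominates either $Z$ or $W$). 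This is where I would invoke Rost's computation: the relevant local statement is his chain rule relating residues on a product to residues on the factors, proved via the behaviour of $\partial_v$ under the $\KMiln_*$-module structure and the projection formula for $\Cores$. The point is that the product $\times_\mu$ is built from $\Res$, multiplication by the ring product, and length multiplicities, and each of these interacts with $\partial_v$ by a rule already in \cite[\S 2--\S 4]{RostChowCoeff}.

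The main obstacle — and the reason the lemma is stated separately here — is the sign bookkeeping: Rost's chain rule 14.4 in \cite{RostChowCoeff} carries a sign error, so I would not simply cite it but re-derive the sign from the conventions of the appendix. Concretely, when $\partial$ acts through the second factor it must be commuted past $\alpha$, producing the Koszul sign $(-1)^{|\alpha|}$, whereas Rost's original formulation effectively omits (or misplaces) this factor; the corrected identity is precisely the Leibniz rule above with that sign present. I would therefore organize the proof as: (i) reduce to the two families of codimension one points of $Z\times_k W$; (ii) on the family lying over codimension one points of $Z$, apply the factor-wise residue formula and check the sign is $+1$ (no interchange); (iii) on the family over codimension one points of $W$, apply the same formula and check the sign is $(-1)^{p}=(-1)^{|a|}$, matching the $\catgrC$-differential; (iv) conclude that the two sides agree componentwise, hence $\times_\mu$ is closed. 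Step (iii), the sign verification, is the delicate one and the whole content of the correction being announced.
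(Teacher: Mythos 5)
Your overall strategy is the same as the paper's: reduce to components at codimension $p+q+1$ points of $X\times_k Y$, observe that only the two families of points (those lying over $\overline{\{x\}}^{(1)}\times\{y\}$ and those over $\{x\}\times\overline{\{y\}}^{(1)}$) contribute, and handle each family by the local residue rules for the cycle module together with the projection formula for $\Cores$. So the architecture is fine. But the one step you yourself flag as "the whole content of the correction" — the sign in step (iii) — is carried out incorrectly, and this is a genuine gap rather than a notational quibble.

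Concretely, you assert that on the family over codimension one points of $W=\overline{\{y\}}$ the sign is $(-1)^{p}=(-1)^{|a|}$. It is not: the identity one must prove is
$$d(a\times_{\mu}b)=(da)\times_{\mu}b+(-1)^{n+p}\,a\times_{\mu}(db)$$
for $a\in M_{n-p}(\kappa(x))\subset C^p(X,M,n)$. The sign $(-1)^{n+p}$ is not a convention you get to choose; it is forced by rule {\bf P3}, which for $\alpha=\Res_{\kappa(u)|\kappa(x)}(a)$ of Milnor degree $n-p$ gives
$$\partial_{v}(\alpha\cdot\beta)=\partial_{v}(\alpha)\cdot s^{\pi}_{v}(\beta)+(-1)^{n-p}s^{\pi}_{v}(\alpha)\cdot\partial_{v}(\beta)+\{-1\}\cdot\partial_{v}(\alpha)\cdot\partial_{v}(\beta),$$
and after killing the first and third terms via {\bf R3c} (the valuation is trivial on $\kappa(x)$, so $\partial_v(\alpha)=0$) one is left with $(-1)^{n-p}=(-1)^{n+p}$. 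This matches the Koszul sign of $\catgrC$ only because the appendix defines that sign using the \emph{total} degree $p+n$ (cohomological degree plus weight), not the cohomological degree $p$ alone; with your sign $(-1)^p$ the two sides disagree whenever $n$ is odd, so the lemma as you would prove it is false. You should also make explicit why the $\{-1\}\cdot\partial_v(\alpha)\cdot\partial_v(\beta)$ term of {\bf P3} disappears (again {\bf R3c}), and note that after applying {\bf R3d} and {\bf R2b} the resulting sum is recognized as $(-1)^{n+p}\Res_{\kappa(u')|\kappa(x)}(a)\cdot(d\circ p_2^*(b))_{u'}$, so that closedness of the flat pullback $p_2^*$ is what lets you reassemble the answer as $(-1)^{n+p}(a\times_{\mu}db)_{u'}$; this last resummation step is absent from your outline.
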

\begin{proof}Let $x$ be a codimension $p$ point in $X$ and $y$ be a codimension $q$ point in $Y$. Let $a$ in $M_{n-p}(\kappa(x))$ and $b$ in $M_{m-q}(\kappa(y))$, we have to prove the equality
\begin{equation}\label{ChainRule}
d(a\times_{\mu}b)=(da)\times_{\mu}b+(-1)^{n+p}a\times_{\mu}(db).
\end{equation}
Let $u'$ be a point of codimension $p+q+1$ in $X\times_k Y$. It amounts to prove that the components along $u'$ of both sides are equal. Denote by $x'$ and $y'$ the projection of $u'$ on $X$ and $Y$. According to the definition of the differential and the product, both of them vanish if we are not in  one of the following two cases:
\begin{enumerate}
\item{$x'\in\overline{\{x\}}^{(1)}$ and $y'=y$;}
\item{$x'=x$ and $y'\in\overline{\{y\}}^{(1)}$.}
\end{enumerate}
In the first case, the $u'$-component of $(da)\times_{\mu}b $ vanishes whereas in the second case the $u'$-component of $a\times_{\mu}(db) $ vanishes. 
Assume that we are in the second case. Then
\begin{equation*}
\begin{split}
d(a\times_{\mu} b) & = \sum_{u,z}\Cores_{\kappa(z)|\kappa(u')}\partial_{v(z)}\left((a\times_{\mu}b)_u\right)\\
&= \sum_{u,z}\lg(\Osheaf_{Z\times_k W,u})\Cores_{\kappa(z)|\kappa(u')}\partial_{v(z)}\left(\Res_{\kappa(u)|\kappa(x)}(a)\cdot \Res_{\kappa(u)|\kappa(y)}(b)\right)
\end{split}
\end{equation*}
where the sum runs over codimension $p+q$ points $u$ in $X\times_kY$ lying above $x,y$ and such that $u'\in\overline{\{u\}}^{(1)}$ and over points $z\in\overline{\{u\}}^N$ lying over $u'$. Fix such points $u$ and $z$ and let $\alpha=\Res_{\kappa(u)|\kappa(x)}(a)$ and $\beta=\Res_{\kappa(u)|\kappa(y)}(b)$.
Rule {\bf{P3}} of \cite{RostChowCoeff} applied to the valuation $v(z)$ and the elements $\alpha\in M_{n-p}(\kappa(u))$ and $\beta\in M_{m-q}(\kappa(u))$ gives
$$\partial_{v(z)}(\alpha\cdot\beta)=\partial_{v(z)}(\alpha)\cdot s^{\pi}_{v(z)}(\beta)+(-1)^{n+p}s^{\pi}_{v(z)}(\alpha)\cdot\partial_{v(z)}(\beta)+\{-1\}\cdot\partial_{v(z)}(\alpha)\cdot\partial_{v(z)}(\beta)$$
where $\pi$ is a uniformizer of the valuation $v(z)$. Since the valuation $v(z)$ on $\kappa(u)$ is trivial on $\kappa(x)$, rule {\bf{R3c}} assures that $\partial_{v(z)}(\alpha)=0$ and thus
$$\partial_{v(z)}(\alpha\cdot\beta)=(-1)^{n+p}s^{\pi}_{v(z)}(\alpha)\cdot\partial_{v(z)}(\beta).$$
Now by rule {\bf{R3d}}, we have $s^{\pi}_{v(z)}(\alpha)=\Res_{\kappa(z)|\kappa(x)}(a)$. Using rule {\bf{R2b}} we get
\begin{equation*}
\begin{split}
\Cores_{\kappa(z)|\kappa(u')}\partial_{v(z)}\left(\alpha\cdot \beta\right) &=(-1)^{n+p}\Cores_{\kappa(z)|\kappa(u')}\left(\Res_{\kappa(z)|\kappa(x)}(a)\cdot\partial_{v(z)}(\beta)\right)\\
&=(-1)^{n+p}\Res_{\kappa(u')|\kappa(x)}(a)\cdot\left(\Cores_{\kappa(z)|\kappa(u')}\partial_{v(z)}(\beta)\right)
\end{split}
\end{equation*}
Therefore we get
\begin{equation*}
\begin{split}
[d(a\times_{\mu}&b)]_{u'} =\sum_{u,z}(-1)^{n+p}\lg(\Osheaf_{Z\times_k W,u})\Res_{\kappa(u')|\kappa(x)}(a)\cdot\left(\Cores_{\kappa(z)|\kappa(u')}\partial_{v(z)}(\beta)\right)\\
&=(-1)^{n+p}\Res_{\kappa(u')|\kappa(x)}(a)\cdot\left(\sum_z\Cores_{\kappa(z)|\kappa(u')}\partial_{v(z)}\left(\sum_u\lg(\Osheaf_{Z\times_k W,u})\Res_{\kappa(u)|\kappa(y)}(b)\right)\right)\\
& =(-1)^{n+p}\Res_{\kappa(u')|\kappa(x)}(a)\cdot(d\circ p_2^*(b))_{u'}
\end{split}
\end{equation*}
where $p_2:Z\times_k Y\ra Y$ is the projection. Now by proposition 4.6 (2) of \cite{RostChowCoeff} the flat pullback $p_2^*$ is a closed map in $\catgrC$, we have $d\circ p_2^*=p_2^*\circ d$, and from this we conclude that
$$[d(a\times_{\mu}b)]_{u'}=(-1)^{n+p}(a\times_{\mu}db)_{u'} $$
A similar computation in the first case yields
$$[d(a\times_{\mu}b)]_{u'}=(da\times_{\mu}b)_{u'} $$
Adding the last two formulas we finaly get equality (\ref{ChainRule}) as desired.
\end{proof}
A  computation similar to the one given in the proof of lemma \ref{LemmaChainRule} yields:
\begin{lemm}\label{LemmaProductBoundary}
Let $U\hookrightarrow X\hookleftarrow Z$ be a weak boundary triple and $Y,Y'$ be $k$-schemes of finite type. We have then a weak boundary triple
$$Y\times_k U\times_k Y'\hookrightarrow Y\times_k X\times_k Y'\hookleftarrow Y\times_k Z\times_k Y' $$
and a commutative square
$$\xymatrix@C=1.5cm{{C^*(Y,M)\otimes C^*(U,M)\otimes C^*(Y',M)}\ar[d]^{\bfone\otimes\partial^U_Z\otimes\bfone}\ar[r]^(.6){\times_{\mu}\circ(\bfone_U\otimes\times_{\mu})} & {C^*(Y\times_k U\times_k Y',M)}\ar[d]^{\partial^{Y\times_k U\times_k Y'}_{Y\times_k Z\times_k Y'}} \\
{C^*(Y,M)\otimes C^*(Z,M)\otimes C^*(Y',M)}\ar[r]^(.6){\times_{\mu}\circ(\bfone_U\otimes\times_{\mu})} & {C^*(Y\times_k Z\times_k Y',M).}} $$
\end{lemm}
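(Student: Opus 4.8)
The first, and easiest, step is to justify the weak boundary triple appearing in the statement. Base change along the flat projections $Y\times_k(-)\times_k Y'$ turns the open immersion $U\hookrightarrow X$ and the closed immersion $Z\hookrightarrow X$ into an open immersion $Y\times_k U\times_k Y'\hookrightarrow Y\times_k X\times_k Y'$ and a closed immersion $Y\times_k Z\times_k Y'\hookrightarrow Y\times_k X\times_k Y'$, and since $U$ is contained in the open complement of $Z$ the scheme $Y\times_k U\times_k Y'$ is contained in the open complement of $Y\times_k Z\times_k Y'$; as all schemes are of pure dimension (running convention of \S\ref{ModCycleSection}) the codimension is still $c$. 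Writing $V=Y\times_k U\times_k Y'$ and $W=Y\times_k Z\times_k Y'$, the boundary morphism $\partial^V_W\colon C^*(V,M)\to C^*(W,M)$ of \S\ref{WeakBoundary} is thus defined, of bidegree $(1-c,c)$; the two composites in the square have the same bidegree, so it is enough to compare their components.

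I would then fix a codimension point $w'$ of $W$ and evaluate both composites on a generator $a\otimes b\otimes c$ with $a\in M_*(\kappa(y))$ supported at a point $y$ of $Y$ of codimension $p$, $b\in M_*(\kappa(u))$ supported at a point $u$ of $U$ of codimension $q$, and $c\in M_*(\kappa(y'))$ supported at a point $y'$ of $Y'$ of codimension $r$. By associativity of $\times_{\mu}$ the product $a\times_{\mu}b\times_{\mu}c$ is well defined and supported at the generic points of $\ol{\{y\}}\times_k\ol{\{u\}}\times_k\ol{\{y'\}}$; fix such a point $\tilde u$. As in the proof of Lemma~\ref{LemmaChainRule} one reads off from the definitions that the $w'$-component of $\partial^V_W(a\times_{\mu}b\times_{\mu}c)$ vanishes unless $w'$ is a codimension one point of $\ol{\{\tilde u\}}$, and a small dimension count then forces the images of $w'$ in $Y$, in $X$ and in $Y'$ to be, respectively, $y$, a codimension one point $\bar z$ of $\ol{\{u\}}$ lying in $Z$, and $y'$ --- only the analogue of \guillemotleft case~2\guillemotright\ of Lemma~\ref{LemmaChainRule} can occur, because neither $Y$ nor $Y'$ has a \guillemotleft $Z$-direction\guillemotright. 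The point of this is that, for every point $z$ of the normalisation of $\ol{\{\tilde u\}}$ lying over $w'$, the geometric valuation $v(z)$ on $\kappa(\tilde u)$ is \emph{trivial} on the subfields $\kappa(y)$ and $\kappa(y')$.

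The remaining computation is then word for word the one of Lemma~\ref{LemmaChainRule}, carried out on the left factor and on the right factor simultaneously: applying rule \textbf{P3} of \cite{RostChowCoeff} to $\partial_{v(z)}$ of the product $\Res(a)\cdot\Res(b)\cdot\Res(c)$, using rule \textbf{R3c} to discard the terms in which $\partial_{v(z)}$ falls on $\Res(a)$ or on $\Res(c)$, using rule \textbf{R3d} to rewrite the surviving specialisations $s^{\pi}_{v(z)}$ of $\Res(a)$ and $\Res(c)$ as restrictions, and using rule \textbf{R2b} to pull the corestrictions $\Cores_{\kappa(z)|\kappa(w')}$ through the resulting restriction-multiplications. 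What is left is a sum over the codimension one points of $\ol{\{u\}}$ lying in $Z$ and over the points of the normalisation above them, which one recognises --- exactly as the closedness of the flat pullback $p_2^*$ was invoked at the end of Lemma~\ref{LemmaChainRule} --- as the $w'$-component of $a\times_{\mu}\partial^U_Z(b)\times_{\mu}c$, by the compatibility of $\partial^U_Z$ with the flat pullback along the projection onto the $U$-factor (the fourth of the formulas recalled in \S\ref{FourMaps}, which holds for weak boundary triples by \S\ref{WeakBoundary}).

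The one place where care is needed --- and the only real difference with Lemma~\ref{LemmaChainRule} --- is the sign bookkeeping: passing $\partial^U_Z$ past the factor $a$ in $\bfone\otimes\partial^U_Z\otimes\bfone$ produces, by the Koszul rule of $\catgrC$ fixed in the appendix, the sign $(-1)^{p+n}$ when $a\in C^p(Y,M,n)$, and this is precisely the sign $(-1)^{n+p}$ produced by the Leibniz rule \textbf{P3} in the computation above, so that the square commutes on the nose, with no residual sign. I expect this sign matching, together with the verification that the transverse valuations $v(z)$ are genuinely trivial on the outer residue fields so that \textbf{R3c} and \textbf{R3d} apply, to be the only subtle points; everything else is a routine transcription of the argument of Lemma~\ref{LemmaChainRule}, which is why the statement is given without a separate proof.
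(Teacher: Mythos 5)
Your proposal is correct and is exactly the argument the paper intends: the lemma is stated with only the remark that it follows from \guillemotleft a computation similar to the one given in the proof of lemma \ref{LemmaChainRule}\guillemotright, and your write-up is precisely that computation, with the right identification of the only surviving case (the valuations $v(z)$ trivial on $\kappa(y)$ and $\kappa(y')$) and the correct matching of the Koszul sign $(-1)^{p+n}$ from $\bfone\otimes\partial^U_Z\otimes\bfone$ against the sign produced by rule \textbf{P3}. Nothing further is needed.
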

\section{Homotopy invariance and higher intersection products}
\subsection{The homotopy invariant cycle complex}
In \cite{RostChowCoeff}, M. Rost shows homotopy invariance for cycle complexes. More precisely he proves that for a vector bundle $E$ of finite rank over a $k$-scheme of finite type $X$, the cycle complex $C^*(E,M)$ is a {\emph{strong deformation retract}} of $C^*(X,M)$. However there is {\emph{no canonical retraction}} as soon as $E$ is not trivial: to get a retraction one has to choose a coordination of the bundle $E$, a variant of the usual notion of trivialization adapted to the use of boundary maps. This dependance disappears at the derived category level, but we have to work at the level of complex and the lack of a canonical homotopy inverse is a {\emph{serious drawback}}. To overcome this problem in this section we replace $C^*(X,M) $ by strong deformation retract that still possesses the formalism of the four basic maps.  Usually homotopy invariance is achieved by considering some kind of Bloch-Suslin complex, instead here we simply consider a colimit.
\subsubsection{}
 We let $\pi_{n,n-1}:\bbA^n\ra\bbA^{n-1}$ be the map of schemes induced by the inclusion of rings $k[t_1,\ldots,t_{n-1}]\hookrightarrow k[t_1,\ldots,t_n]$. For a $k$-scheme $X$ of finite type, we let $\pi_{X,n,n-1}:=\id_X\times_k\pi_{n,n-1}$. We denote by $\pi_{X,n}:X\times_k\bbA^n\ra X$ the projection.
\begin{defi}
Let $X$ be a $k$-scheme of finite type and $M$ be a cycle module over $k$. We let $\sC^*(X,M)$ be the complex of abelian groups
$$\sC^*(X,M)=\colim_n C^*(X\times\bbA^n,M)$$
with structural morphisms in the inductive system given by the flat pullbacks $\pi^*_{X,n,n-1}$.
\end{defi}
By construction we have a closed graded map of degree $(0,0)$:
$$\alpha^{\bf{A}}_X:C^*(X,M)\ra\sC^*(X,M).$$
\subsubsection{}\label{FourMapsHTP} The four basics maps defined by M. Rost in \cite[\S 3]{RostChowCoeff} extend directly to the complex $\sC^*(X,M)$. For sake of completeness we provide some details, but this is straightforward from the various compatibilities between the four basic maps checked in \cite[\S 4]{RostChowCoeff}. Let $X,Y$ be scheme of finite type over $k$ and $f:Y\ra X$ a morphism of constant relative dimension $r$. Flat pull-back being functorial, if $f$ is flat it induces a commutative diagram
$$\xymatrix@R=.6cm{{C^*(X,M)}\ar[r]\ar[d]^{f^*} & {\cdots}\ar[r] & {C^*(X\times_k\bbA^{n-1},M)}\ar[r]^{\pi^*_{X,n,n-1}}\ar[d]^{(f\times_k\Id_{\bbA^{n-1}})^*} & {C^*(X\times_k\bbA^n,M)}\ar[r]\ar[d]^{(f\times_k\Id_{\bbA^n})^*} & {\cdots}\\
{C^*(Y,M)}\ar[r] & {\cdots}\ar[r] & {C^*(Y\times_k\bbA^{n-1},M)}\ar[r]^{\pi^*_{Y,n,n-1}} & {C^*(Y\times_k\bbA^n,M)}\ar[r] & {\cdots}} $$
and taking colimits, we get a closed graded morphism $f^*:\sC^*(X,M)\ra\sC^*(Y,M)$ of bidegree $(0,0)$. If $f$ is proper we get, using the base change formula, a commutative diagram
$$\xymatrix@R=.6cm{{C^*(Y,M)}\ar[r]\ar[d]^{f_*} & {\cdots}\ar[r] & {C^*(Y\times_k\bbA^{n-1},M)}\ar[r]^{\pi^*_{Y,n,n-1}}\ar[d]^{(f\times_k\Id_{\bbA^{n-1}})_*} & {C^*(Y\times_k\bbA^n,M)}\ar[r]\ar[d]^{(f\times_k\Id_{\bbA^n})_*} & {\cdots}\\
{C^{*}(X,M)}\ar[r] & {\cdots}\ar[r] & {C^{*}(X\times_k\bbA^{n-1},M)}\ar[r]^{\pi^*_{X,n,n-1}} & {C^{*}(X\times_k\bbA^n,M)}\ar[r] & {\cdots}} $$
which provides a closed graded morphism $f_*:\sC^*(X,M)\ra\sC^{*}(Y,M)$ of bidegree $(-r,r)$. 
\begin{rema}
From the definition it is obvious that the base change formula holds for the complexes $\sC^*(X,M)$, more precisely for each cartesian square
$$\xymatrix{{X'}\ar[r]^{g'}\ar[d]^{f'}\ar@{}[rd]|{\square} & {Y'}\ar[d]^{f}\\
{X}\ar[r]^{g} & {Y}} $$
with $f$ proper and $g$ flat, we have $g^*\circ f_*=f'_*\circ g'^* $.
\end{rema}
Let $a_1,\ldots,a_s\in\Gamma(X,\Osheaf^{\times}_X)$ be global units on a $k$-scheme $X$ of finite type. According to \cite[Lemma 4.3]{RostChowCoeff} we have a commutative diagram
$$\xymatrix{{C^*(X,M)}\ar[r]\ar[d]^{\{a_1,\ldots,a_s\}} & {\cdots}\ar[r] & {C^*(X\times_k\bbA^{n-1},M)}\ar[r]^{\pi^*_{X,n,n-1}}\ar[d]^{\{\pi_{n-1}^*a_1,\ldots,\pi_{n-1}^*a_s\}} & {C^*(X\times_k\bbA^n,M)}\ar[r]\ar[d]^{\{\pi_n^*a_1,\ldots,\pi_n^*a_s\}} & {\cdots}\\
{C^*(X,M)}\ar[r] & {\cdots}\ar[r] & {C^*(X\times_k\bbA^{n-1},M)}\ar[r]^{\pi^*_{X,n,n-1}} & {C^*(X\times_k\bbA^n,M)}\ar[r] & {\cdots}} $$
and thus a closed graded morphism $\{a_1,\ldots,a_s\}:\sC^*(X,M)\ra\sC^*(X,M)$ of bidegree $(0,-s)$. 
Given a weak boundary triple $U\hookrightarrow W\hookleftarrow Y$, where $Y$ is of pure codimension $c$ in $X$, we have a commutative diagram by \cite[Proposition 4.4]{RostChowCoeff}
$$\xymatrix{{C^*(U,M)}\ar[r]\ar[d]^{\partial^{U}_Y} & {\cdots}\ar[r] & {C^*(U\times_k\bbA^{n-1},M)}\ar[r]^{\pi_{U,n,n-1}^*}\ar[d]^{\partial^{U\times_k\bbA^{n-1}}_{Y\times_k\bbA^{n-1}}} & {C^*(U\times_k\bbA^n,M)}\ar[r]\ar[d]^{\partial^{U\times_k\bbA^n}_{Y\times_k\bbA^n}} & {\cdots}\\
{C^{*}(Y,M)}\ar[r] & {\cdots}\ar[r] & {C^{*}(Y\times\bbA^{n-1},M)}\ar[r]^{\pi^*_{Y,n,n-1}} & {C^{*}(Y\times_k\bbA^n,M)}\ar[r] & {\cdots}} $$
and thus a graded morphism $\partial^U_Y:\sC^*(U,M)\ra\sC^{*}(Y,M)$ of bidegree $(1-c,c)$. If the weak boundary triple is a boundary triple then this morphism is closed. All compatibilities between basic maps proved in \cite[\S 4]{RostChowCoeff} hold true for $\sC^*(X,M)$.
\subsubsection{}\label{ProductHtp}
Assume that $M$ has a ring structure. Via the definitions and rule {\bf{R2a}}, it is easy to check that the products on $C^*(X,M)$
$$\mu_{X,n}:C^*(X,M)^{\otimes n}\ra C^*(X,M)$$
defined in \S \ref{ParDefProduct} by the ring structure on $M$ are compatible with pullbacks, in particular 
$$\mu_{X\times_k\bbA^r,n}\circ(\pi^*_{X,r,r-1})^{\otimes n}=(\pi^*_{X,r,r-1})\circ\mu_{X\times_k\bbA^{r-1},n};$$
and we get a closed graded map of bidegree $(0,0)$
$$\mu^{\bf{A}}_{X,n}:\sC(X,M)^{\otimes n}\ra\sC^*(X^n,M).$$
The homotopy invariant cycle complex has therefore also a product structure.
\subsubsection{}\label{HtpSDR} 
By definition the pullback map $\sC^*(X,M)\ra\sC^*(X\times_k\bbA^1)$ is an isomorphism. More generally we have the following lemma:
\begin{lemm}\label{LemmaHTPBundle}
Let $X$ be a $k$-scheme of finite type and $E$ be a vector bundle of finite rank over $X$. Then the pullback morphism
\begin{equation}\label{PullbackBundle}
\sC^*(X,M)\ra\sC^*(E,M)
\end{equation}
is an isomorphism.
\end{lemm}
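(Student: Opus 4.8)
The plan is to reduce the statement, by Noetherian induction together with Rost's localization sequence, to the case of a trivial bundle, for which the assertion is essentially the definition of $\sC^*$. Notice that, organized this way, the argument does not use Rost's homotopy invariance theorem for the complexes $C^*(-,M)$: that theorem is precisely the phenomenon that becomes tautological once one passes to the colimit $\sC^*$.

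First I would record the localization tool. For a closed immersion $i:Z\hookrightarrow X$ of pure codimension $c$ with open complement $j:U\hookrightarrow X$, the summands of $C^*(X,M)$ indexed by points lying in $Z$ form a subcomplex which, up to the shift induced by the codimension indexing, is $C^*(Z,M)$, and the quotient is $C^*(U,M)$; this gives a short exact sequence of complexes
$$0\ra C^*(Z,M)\xrightarrow{\;i_*\;}C^*(X,M)\xrightarrow{\;j^*\;}C^*(U,M)\ra 0.$$
It is natural for flat pull-back: if $f\colon X'\ra X$ is flat, $Z'=f^{-1}(Z)$, $U'=f^{-1}(U)$, then $f^*\circ i_*=i'_*\circ (f|_{Z'})^*$ by the base change formula \cite[Prop. 4.1 (3)]{RostChowCoeff} and $f^*\circ j^*=(j')^*\circ (f|_{U'})^*$ by functoriality of pull-backs. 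Replacing $X$ by $X\times_k\bbA^n$ and passing to the colimit over $n$, which is a filtered and hence exact colimit, one gets a short exact sequence $0\ra\sC^*(Z,M)\ra\sC^*(X,M)\ra\sC^*(U,M)\ra 0$, still natural for flat pull-back.

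Next, the case of a trivial bundle $E=X\times_k\bbA^r$. After the evident identification $X\times_k\bbA^r\times_k\bbA^n\simeq X\times_k\bbA^{r+n}$ the system $n\mapsto C^*(X\times_k\bbA^r\times_k\bbA^n,M)$ computing $\sC^*(E,M)$ is a cofinal subsystem of the system computing $\sC^*(X,M)$, and the pull-back morphism \eqref{PullbackBundle} is the resulting isomorphism; for $r=1$ this is the remark already made in \S\ref{HtpSDR}, and the general $r$ follows by factoring $X\times_k\bbA^r\ra X$ as a composite of $r$ projections, each a trivial $\bbA^1$-bundle over the previous one, and using functoriality of pull-backs. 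The general case is then proved by Noetherian induction on the closed subsets of $X$: if $X=\emptyset$ there is nothing to prove; otherwise pick a non-empty open $U\subseteq X$ over which $E$ is trivial and set $Z=X\setminus U$ with its reduced structure, so $Z\subsetneq X$. Applying the naturality above to $\pi_E\colon E\ra X$ and the boundary triple $U\hookrightarrow X\hookleftarrow Z$ (whose pull-back along $\pi_E$ is $E|_U\hookrightarrow E\hookleftarrow E|_Z$) yields a commutative ladder of short exact sequences whose three vertical maps are $\pi_{E|_Z}^*$, $\pi_E^*$, $\pi_{E|_U}^*$. The left one is an isomorphism by the inductive hypothesis applied to $E|_Z$, the right one by the trivial bundle case, hence $\pi_E^*$ is an isomorphism by the five lemma, applied in each degree.

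The only delicate part is bookkeeping rather than substance: one must check that the localization sequence really is a short exact sequence of complexes with the stated flat-pull-back naturality — which is a repackaging of the properties of pushforwards, pull-backs and boundary maps recalled in \S\ref{FourMapsHTP} — and one must keep track of the codimension shift and of the chosen coordinates on the affine spaces so that, in the trivial bundle case, the pull-back morphism literally is the cofinality isomorphism. Granting the formalism of \S\ref{FourMapsHTP}, none of this presents a real obstacle.
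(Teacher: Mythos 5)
Your argument is correct, and it reaches the same reduction as the paper --- namely to the trivial-bundle case, where the statement is a cofinality observation about the defining colimit --- but it gets there by a genuinely different route. The paper proves that $Y\mapsto\sC^p(Y,M,m)$ is a \emph{Nisnevich sheaf} on $X_{\mathrm{Nis}}$, using exactly the direct-sum decomposition $C_p(Y,M,m)=C_p(U,M,m)\oplus C_p(Z,M,m)$ attached to an excision square, and then concludes by Zariski locality of the sheaf morphism $\sC^p(-,M,m)\ra\sC^p(E|_{-},M,m)$. You instead package the same $U$/$Z$ decomposition as Rost's localization short exact sequence, check its naturality under flat pull-back via the base change formula, pass it through the filtered (hence exact) colimit, and run a Noetherian induction with the five lemma. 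Your version is more elementary and makes explicit the locality step that the paper compresses into one sentence; the paper's version proves a strictly stronger statement (Nisnevich, not just Zariski, descent) that is reusable elsewhere. One small point of bookkeeping that you flag but should resolve the way the paper does: for an arbitrary reduced closed complement $Z=X\setminus U$ there is no single codimension shift, since $Z$ need not be of pure dimension; the clean fix is to switch to the dimension indexing $C_p(-,M,m)$, for which the decomposition $C_p(X,M,m)=C_p(U,M,m)\oplus C_p(Z,M,m)$ holds with no shift at all, and exactness of your sequence (which is all the five lemma needs) is unaffected. With that adjustment your proof is complete.
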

\begin{proof}
It is enough to check that the presheaf $Y\mapsto\sC^p(Y,M,m)$ on the small Nisnevich site $X_{\mathrm{Nis}}$ of $X$ is a Nisnevich sheaf. Indeed in that case the presheaf $Y \mapsto\sC^p(E|_Y,M,m)$ on $X_{\mathrm{Nis}}$ is also a Nisnevich sheaf and the proof that (\ref{PullbackBundle}) is an isomorphism can be done locally on $X$ for the Zariski topology, and we are reduced to the case of a trivial bundle of finite rank which follows from the definition of the homotopy invariant cycle complex.
Since $X$ is assumed to be of pure dimension, it is equivalent to show that the presheaf on $X_{\mathrm{Nis}}$
$$Y\mapsto\sC_p(Y,M,m)=\colim_n\sC_{p+n}(Y\times_k\bbA^n,M,m-n)$$
 is a Nisnevich sheaf.  Recall that a presheaf of abelian group $F$ on $X_{\mathrm{Nis}}$ is a Nisnevich sheaf if and only if the square 
 $$\xymatrix{{F(Y)}\ar[r]\ar[d] & {F(V)}\ar[d]\\
{F(U)}\ar[r] & {F(U_V)}} $$
is cartesian for any \'etale morphism $Y\ra X$ and any distinguished cartesian square\footnote{This means that such that $p$ is an \'etale morphism, $e$ is an open immersion and the induced map of closed subschemes with their reduced scheme structure $V\setminus U_V\ra Y\setminus U$ is an isomorphism.}
$$\xymatrix{{U_V}\ar[r]\ar[d]\ar@{}[rd]|{\square} & {V}\ar[d]^p\\
{U}\ar[r]^e & {Y.}} $$
Given an excision square we have decompositions as a direct sums of abelian groups
\begin{equation*}
\begin{split}
C_p(Y,M,n) &=C_p(U,M,m)\oplus C_p(Z,M,m)\\
C_p(V,M,n) &=C_p(U_V,M,m)\oplus C_p(Z_V,M,m)
\end{split}
\end{equation*}
where $Z$ is the closed complement of $U$ in $Y$ with its reduced scheme structure and $Z_V$ is the closed complement of $U_V$ in $V$ with its reduced scheme structure.
Since the squares
$$\xymatrix{{U_V\times_k\bbA^n}\ar[r]\ar[d]\ar@{}[rd]|{\square} & {V\times_k\bbA^n}\ar[d]^p\\
{U\times_k\bbA^n}\ar[r]^e & {Y\times_k\bbA^n}} $$
are also excision squares, we have similar decompositions:
\begin{equation*}
\begin{split}
C_{p+n}(Y\times_k\bbA^n,M,m-n) &=C_{p+n}(U\times_k\bbA^n,M,m-n)\oplus C_{p+n}(Z\times_k\bbA^n,M,m-n)\\
C_{p+n}(V\times_k\bbA^n,M,m-n) &=C_{p+n}(U_V\times_k\bbA^n,M,m-n)\oplus C_{p+n}(Z_V\times_k\bbA^n,M,m-n).
\end{split}
\end{equation*}
Taking the colimit over $n$ we get decompositions as direct sums of abelian groups
\begin{equation*}
\begin{split}
\sC_p(Y,M,m) &=\sC_p(U,M,m)\oplus \sC_p(Z,M,m)\\
\sC_p(V,M,m) &=\sC_p(U_V,M,m)\oplus \sC_p(Z_V,M,m)
\end{split}
\end{equation*}
and the results follows. 
\end{proof}
Let $C$ and $D$ be objects in $\catgrC$. We assume to be given two closed graded maps of bidegree $(0,0)$ 
$$\xymatrix@C=.6cm{{C}\ar@<.5ex>[r]^\alpha & {D}\ar@<.5ex>[l]^r} $$
such that $r\circ\alpha=\bfone$ and a graded map $H:D\ra D$ of bidegree $(-1,0)$ such that $\delta(H)=\bfone-\alpha\circ r$. Such objects and maps are called \emph{SDR-data}. In the terminology of \cite[\S 9.1]{RostChowCoeff} a closed graded map $\alpha:C\ra D$ of bidegree $(0,0)$ is called a strong homotopy equivalence if there is an SDR-datum such that $H\circ\alpha=0$ and the pair $(r,H)$ is called an $h$-data for $\alpha$. To be short a SDR-datum will be denoted by the diagram
$$\left(\xymatrix@C=.6cm{{C}\ar@<.5ex>[r]^\alpha & {D}\ar@<.5ex>[l]^r},H\right).$$
Let us denote by $\alpha_X:C^*(X,M)\ra C^*(X\times_k\bbA^1)$ the pullback map. As shown by M. Rost in \cite[\S 9.1]{RostChowCoeff} the morphism $\alpha_X$ is strong homotopy equivalence and an $h$-data is provided by the two graded maps
$$r_X:C^*(X\times_k\bbA^1,M)\xrightarrow{j^*} C^*(X\times_k\bbG_m,M)\xrightarrow{\{-1/t\}}C^*(X\times_k\bbG_m)\xrightarrow{\partial_{\infty}}C^*(X,M);$$
$$H_X:C^*(X\times_k\bbA^1,M)\xrightarrow{p_2^*}C^*(X\times_k(\bbA^1\times_k\bbA^1\setminus\Delta),M)\xrightarrow{\{s-t\}}\xymatrix@R=.5cm{{C^*(X\times_k (\bbA^1\times_k\bbA^1\setminus\Delta),M)}\ar[d]^{p_{1*}}\\ {C^*(X\times_k\bbA^1,M).}}$$
In the definition above $t$ is the standard coordinate on $\bbA^1$, $s,t$ are the coordinates on $\bbA^1\times\bbA^1$, $\Delta$ is the diagonal, $p_1,p_2$ are the two projections, $j:\bbG_m\hookrightarrow\bbA^1$ is the open embedding and $\partial_{\infty}$ is the boundary morphism with respects to the boundary triple
$$X\times\bbG_m\hookrightarrow X\times_k(\bbP^1\setminus\{0\})\hookleftarrow X\times_k\{\infty\}=X.$$
By induction one defines 
$$r_{X,n}= r_{X,n-1}\circ r_{X\times_k\bbA^{n-1}}\qquad H_{X,n}=H_{X\times_k\bbA^{n-1}}+\pi_{X,n,n-1}^*\circ H_{X,n-1}\circ r_{X\times_k\bbA^{n-1}}.$$
Our definition of a retraction differs slightly from Rost's definition, this choice is more convenient for us. Indeed with our conventions one has
\begin{equation*}
\begin{split}
H_{X,n}\circ\pi^*_{X,n,n-1}&= H_{X\times_k\bbA^{n-1}}\circ\pi^*_{X,n,n-1}+\pi^*_{X,n,n-1}\circ H_{X,n-1}\circ r_{X\times_k\bbA^{n-1}}\circ\pi^*_{X,n,n-1}\\
&= 0+\pi^*_{X,n,n-1}\circ H_{X,n-1}=\pi^*_{X,n,n-1}\circ H_{X,n-1}
\end{split}
\end{equation*}
and therefore we get a graded map of degree $(-1,0)$
$$H_X^{\bf{A}}:=\colim_n H_{X,n}:\sC^*(X,M)\ra \sC^*(X,M).$$
Since $r_{X,n}\circ\pi^*_{X,n,n-1}=r_{X,n-1}$, we also have a closed graded map of degree $(0,0)$
$$r^{\bf{A}}_X:\sC^*(X,M)\ra C^*(X,M).$$
It is easy to check that $\delta(H_{X,n})=\bfone_{X\times_k\bbA^n}-\pi^*_{X,n}\circ r_{X,n} $ and so the graded maps $r^{\bf{A}}_X$ and $H^{\bf{A}}_X$ provide an  $h$-data for $\alpha^{\bf{A}}_X$:
$$\left(\xymatrix@C=.6cm{{C^*(X,M)}\ar@<.5ex>[r]^{\alpha^{\bf{A}}_X} & {\sC^*(X,M)}\ar@<.5ex>[l]^{r^{\bf{A}}_X}},H^{\bf{A}}_X\right).$$ 
\subsection{$\bf{A_{\infty}}$-algebra structure on homotopy invariant cycle complexes}\label{HtpAinf}
In the present subsection we assume that $X$ is a smooth scheme of finite type over $k$. It will be convenient to shorten our notation and write simply $[X]$ to denote the homotopy invariant cycle complex $\sC^*(X,M)$ with coefficients in a cycle module $M$. We assume that $M$ has a \emph{ring structure} and explain how to the higher deformation spaces introduced in section \ref{HighDefn} provide an $A_{\infty}$-algebra structure on $[X]$. 
\subsubsection{}\label{DefnAinf}
First let us recall the definition of an $A_{\infty}$-algebra. The formulation we give below is sufficient for our need in the present section, we will need a more compact but maybe less intuitive reformulation of the definition in the next section.  An \emph{$A_{\infty}$-algebra} is an object\label{DefAinfty} $A$ in $\catgrC$ together with a graded map of bidegree $(2-n,0)$
$$m_n: A^{\otimes n}\ra A$$
for each integer $n\geqslant 1$, such that
\begin{itemize}
\item{$m_1$ coincides with the differential of $A$;}
\item{for any integer $n\geqslant 1$
$$\sum_{r+s+t=n}(-1)^{r+st}m_u\circ\left(\bfone_A^{\otimes r}\otimes m_s\otimes\bfone_A^{\otimes t}\right)=0$$
where the sum runs over all nonnegative integers $r,s,t$ such that $r+s+t=n$ and $u=r+1+t$.}
\end{itemize}
\subsubsection{}
Let $n\geqslant 2$ be an integer. The higher deformation space $\mathscr{D}_{X,n}$ defined in subsection \ref{HighDefn} provides a weak boundary triple
$$X^n\times_k\bbG_m^{n-1}\hookrightarrow\scr D_{X,n}\hookleftarrow\scr N_{X,n} $$
and the fiber $\scr N_{X,n}$ over $\{0\}^{n-1}$ of $\mathscr{D}_{X,n}$ is a vector bundle of finite rank over $X$. Let us denote by $\partial_{X,n}:\left[X^n\times_k\bbG_m^{n-1}\right]\ra[\scr N_{X,n}]$ the boundary morphism associated to this triple and let $\eta_{X,n}:\scr N_{X,n}\ra X$ be the projection. The higher specialization map $J_{X,n}$ is defined as the composition
$$\left[X^n\right]\xrightarrow{(\rho_{X,n})^{*}}\left[X^n\times_k\bbG_m^{n-1}\right]\xrightarrow{\{t_1,\ldots,t_{n-1}\}}\left[X^n\times_k\bbG_m^{n-1}\right]\xrightarrow{\partial_{X,n}}\left[\scr N_{X,n}\right]$$
where $\rho_{X,n}:X^n\times_k\bbG_m^{n-1}\ra X^n$ denotes the projection map. By lemma \ref{LemmaHTPBundle}, we know that the map $\eta_{X,n}^*$ is an isomorphism, we can therefore define our
higher products 
$$m^{\bf{A},\cap}_{X,n}:\sC^*(X,M)^{\otimes n}\ra\sC^*(X,M) $$
to be the maps
$$m^{\bf{A},\cap}_{X,n}=(\eta^*_{X,n})^{-1}\circ J_{X,n}\circ \mu^{\bf{A}}_{X,n}.$$
For $n=1$ the map $m^{\bf{A},\cap}_{X,1} $ is simply the differential of $[X]$. 
\subsubsection{}
Now the main result is:
\begin{theo}\label{HAinfInter}
The maps $m^{\bf{A},\cap}_{X,n}$ satisfy the relation
$$\sum_{r+s+t=n}(-1)^{r+st}m^{\bf{A},\cap}_{X,u}\circ\left(\bfone_X^{\otimes r}\otimes m^{\bf{A},\cap}_{X,s}\otimes\bfone_X^{\otimes t}\right)=0$$
for all integers $n\geqslant 1$. Consequently they define an $A_{\infty}$-algebra structure on $\sC^*(X,M)$. 
\end{theo}
We break the proof in two lemmas. Let 
$$\partial^n_{X,r,s,t}:\left[X^n\times_k\bbG_m^{n-1}\right]\ra\left[\scr D^{r,s,t}_{X,n}|_{\bbG_m^{u-1}}\right]$$
be the boundary map, associated to the weak boundary triple
$$X^n\times_k\bbG_m^{n-1}\simeq\scr D_{X,n}|_{\bbG_m^{n-1}}\hookrightarrow\scr D_{X,n}|_{\bbG_m^r\times_k\bbA^{s-1}\times_k\bbG_m^t}\hookleftarrow\scr D_{X,n}|_{\bbG_m^r\times_k\{0\}^{s-1}\times_k\bbG_m^t}\simeq\scr D^{r,s,t}_{X,n}|_{\bbG_m^{u-1}} $$
and 
$$\partial^{r,s,t}_{X,0}:\left[\scr D^{r,s,t}_{X,n}|_{\bbG_m^{u-1}}\right]\ra\left[\scr N_{X,n}\right]$$
the boundary map associated to the weak boundary triple
$$\scr D^{r,s,t}_{X,n}|_{\bbG_m^{u-1}}\hookrightarrow\scr D^{r,s,t}_{X,n}\hookleftarrow\scr D^{r,s,t}_{X,n}|_{\{0\}^{u-1}}\simeq\scr N_{X,n}.$$
We then get a morphism $\left[X^n\times_k\bbG_m^{n-1}\right]\ra\left[\scr N_{X,n}\right]$ of bidegree $(3-n,0)$:
$$J_{X,n}^{r,s,t}:=\partial^{r,s,t}_{X,0}\circ\partial^n_{X,r,s,t}\circ\{t_1,\ldots,t_{n-1}\}\circ\rho^*_{X,n}.$$
We consider the maps of bidegree $(3-n,0)$:
$$m^{\bf{A},\cap}_{X,r,s,t}=(\eta_{X,n}^*)^{-1}\circ J^{r,s,t}_{X,n}\circ \mu^{\bf{A}}_{X,n}.$$
The following lemma generalizes lemma 11.7 of \cite{RostChowCoeff}:
\begin{lemm}
Let $r,s,t$ be nonnegative integers such that $n=r+s+t$. Then
$$m^{\bf{A},\cap}_{X,r,s,t}=(-1)^{r+st}m^{\bf{A},\cap}_{X,u}\circ\left(\bfone_X^{\otimes r}\otimes m^{\bf{A},\cap}_{X,s}\otimes\bfone_X^{\otimes t}\right)$$
where $u=r+1+t$.
\end{lemm}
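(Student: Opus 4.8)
The plan is to read the identity off from the geometry of the higher deformation spaces of \S\ref{HighDefn}, by factoring the composite boundary morphism $J^{r,s,t}_{X,n}$ through the specialization attached to the middle block of diagonals $\delta^{r+1}_{X,n},\ldots,\delta^{r+s-1}_{X,n}$ (which will produce the inner $m^{\bf{A},\cap}_{X,s}$) and the specialization attached to the contracted configuration $\delta^\bullet_{X,u}$ on $(X^n)_{r,s,t}=X^u$ (which will produce the outer $m^{\bf{A},\cap}_{X,u}$). First I would split the symbol operator as
$$\{t_1,\ldots,t_{n-1}\}=\{t_1,\ldots,t_r\}\circ\{t_{r+1},\ldots,t_{r+s-1}\}\circ\{t_{r+s},\ldots,t_{n-1}\},$$
isolating the $s-1$ coordinates killed by $\partial^n_{X,r,s,t}$ from the $r+t=u-1$ surviving ones; the rule $\partial^U_Z\circ\{j^*a\}=-\{i^*a\}\circ\partial^U_Z$ of \S\ref{FourMaps} (valid for weak boundary triples) then lets me pull the two families of surviving symbols out past $\partial^n_{X,r,s,t}$, at the cost of a sign, leaving the middle symbols to be absorbed by $\partial^n_{X,r,s,t}$ into a middle specialization.

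For the middle block I would invoke the functorial square of $\varpi^n_{X,s}$ (the last diagram of \S\ref{HighDefn}). Over the locus where the $r+t$ outer coordinates are invertible, $\varpi^n_{X,s}$ restricts --- by the functoriality of simultaneous deformations recalled in \S\ref{FuncDefNorm}, using condition $\mathbf{T}$ and Remark \ref{RemaT} --- to an isomorphism identifying $\scr D_{X,n}|_{\bbG_m^r\times\bbA^{s-1}\times\bbG_m^t}$ with $X^r\times_k\scr D_{X,s}\times_k X^t$ away from the outer $\bbG_m$-factors, under which $\partial^n_{X,r,s,t}$ becomes the boundary morphism $\partial_{X,s}$ acting on the middle factor. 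Combined with Lemma \ref{LemmaProductBoundary}, the compatibility of the product with pullbacks and with the module structure (\S\ref{ProductHtp}), and the associativity $\mu^{\bf{A}}_{X,n}=\mu^{\bf{A}}_{X,u}\circ(\bfone_X^{\otimes r}\otimes\mu^{\bf{A}}_{X,s}\otimes\bfone_X^{\otimes t})$ (\S\ref{ParDefProduct}), this identifies the middle part of $J^{r,s,t}_{X,n}\circ\mu^{\bf{A}}_{X,n}$ with $\bfone_X^{\otimes r}\otimes\bigl(\eta^*_{X,s}\circ m^{\bf{A},\cap}_{X,s}\bigr)\otimes\bfone_X^{\otimes t}$, the pullback isomorphisms attached to the vector bundle $\scr N^{r,s,t}_{X,n}$ over $\scr N_{X,u}$ (and to $\eta_{X,s}$) being disposed of by Lemma \ref{LemmaHTPBundle}.

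For the outer block I would use the functorial square of $\varpi^{r,s,t}_{X,n}$ together with the identification, carried out in \S\ref{HighDefn}, of $\scr D^{r,s,t}_{X,n}$ with the higher deformation space over $\scr N^{r,s,t}_{X,n}$ for the pulled-back diagonals $\delta^\bullet_{X,u}$. Since $\varpi^{r,s,t}_{X,n}$ is a closed immersion fitting into cartesian squares (\S\ref{FuncDefNorm}, \S\ref{HighDefn}), the remaining data $\partial^{r,s,t}_{X,0}$, the surviving symbols (relabelled to $t_1,\ldots,t_{u-1}$), and the residual pullbacks compute exactly the specialization $J_{X,u}$ on $X^u=(X^n)_{r,s,t}$; the two normalizations $(\eta^*_{X,n})^{-1}$ and $(\eta^*_{X,u})^{-1}$ agree via the bundle structure of $\scr N_{X,n}$ over $\scr N_{X,u}$, again by Lemma \ref{LemmaHTPBundle}. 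Assembling the three steps and substituting the definitions $m^{\bf{A},\cap}_{X,\bullet}=(\eta^*_{X,\bullet})^{-1}\circ J_{X,\bullet}\circ\mu^{\bf{A}}_{X,\bullet}$ then gives the stated equality up to sign.

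The geometric identifications above are essentially formal once the functorial diagrams of \S\ref{HighDefn} and the compatibilities of \S\ref{FourMaps} are in hand, so \emph{the real difficulty is the sign} $(-1)^{r+st}$. It must be assembled from the $r+t$ applications of $\partial^U_Z\circ\{j^*a\}=-\{i^*a\}\circ\partial^U_Z$ used to free the surviving symbols from $\partial^n_{X,r,s,t}$, from the Koszul signs produced in the tensor DG category $\catgrC$ when the permutation of factors in the $\varpi^n_{X,s}$-square is applied to the nonzero-degree boundary morphisms, and from the bookkeeping of the coordinate relabelling $\scr D^{r,s,t}_{X,n}\simeq\scr D_{X,u}$ (which involves the anticommutativity of Milnor symbols) together with the sign conventions of $\catgrC$ fixed in the appendix. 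Checking that these contributions collapse to exactly $(-1)^{r+st}$, and to nothing extraneous, is the delicate computation, entirely analogous to the sign bookkeeping of Lemmas 11.6 and 11.7 of \cite{RostChowCoeff}.
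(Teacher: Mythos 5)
Your geometric skeleton is exactly the paper's: factor $J^{r,s,t}_{X,n}$ through the middle specialization via the $\varpi^n_{X,s}$-square (producing the inner $m^{\bf{A},\cap}_{X,s}$ by means of Lemma \ref{LemmaProductBoundary} and the compatibility $\partial^{n}_{X,r,s,t}\circ(\varpi^n_{X,s})^*=(\varpi^n_{X,s})^*\circ\tilde{\partial}_{X,s}$) and through the $\varpi^{r,s,t}_{X,n}$-square (producing the outer $m^{\bf{A},\cap}_{X,u}$ from $\partial^{r,s,t}_{X,0}\circ(\varpi^{r,s,t}_{X,n})^*=(\varpi^{r,s,t}_{X,n})^*\circ\partial_{X,u}$), with Lemma \ref{LemmaHTPBundle} disposing of the bundle pullbacks. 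The gap is that you never actually compute the sign: you correctly identify it as ``the real difficulty'' and then stop, so the one piece of quantitative content in the statement is left unverified. It is in fact a two-line computation once the diagrams are in place. By graded-commutativity of Milnor symbols,
$$\{t_1,\ldots,t_{n-1}\}=(-1)^{(s-1)t}\{t_1,\ldots,t_r,t_{r+s},\ldots,t_{n-1}\}\circ\{t_{r+1},\ldots,t_{r+s-1}\},$$
the sign coming from moving the block of $s-1$ middle symbols past the block of $t$ trailing ones; and since $\partial^n_{X,r,s,t}$ has total degree $1$, passing the $r+t$ surviving units through it via $\partial^U_Z\circ\{j^*a\}=-\{i^*a\}\circ\partial^U_Z$ costs $(-1)^{r+t}$. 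The product $(-1)^{(s-1)t}\cdot(-1)^{r+t}=(-1)^{r+st}$ is the asserted sign.

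Two smaller points on your accounting of the sign sources. First, you attribute the Milnor-anticommutativity contribution to ``the coordinate relabelling $\scr D^{r,s,t}_{X,n}\simeq\scr D_{X,u}$''; the relabelling itself (fixed in the conventions, $t_{r+s+i}\mapsto t_{r+i}$) is sign-free --- the $(-1)^{(s-1)t}$ arises from reordering the symbol operator as displayed above, which is forced because the middle symbols must be applied \emph{before} $\partial^n_{X,r,s,t}$ while the surviving ones must end up \emph{after} it. Second, the extra Koszul signs you fear from applying the permutation of factors in the $\varpi^n_{X,s}$-square to boundary morphisms of nonzero degree do not contribute: they are already absorbed into the definition of the tensor product of graded maps in $\catgrC$ and into the sign-free commutative square of Lemma \ref{LemmaProductBoundary}, so there is nothing extraneous to cancel. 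With the displayed computation inserted, your argument closes up and coincides with the paper's proof.
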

\begin{proof}
Denote by $\overline{\partial}_{X,s}$ the boundary morphism provided by the weak boundary triple
$$X^r\times_k X^s\times_k\bbG_m^{s-1}\hookrightarrow X^r\times_k\scr D_{X,s}\times_k X^t\hookleftarrow X^r\times_k\scr N_{X,s}\times_k X^t $$
Using in particular lemma \ref{LemmaProductBoundary}, we have then a commutative diagram:
$$\xymatrix@C=.6cm{{[X]^{\otimes n}}\ar@{{*}{-}{>}}[ddddd]_{\bfone_X^{\otimes r}\otimes m^{\bf{A},\cap}_{X,s}\otimes\bfone_X^{\otimes t}}\ar@{{*}{-}{>}}[r]^{\mu^{\bf{A}}_{X,n}}\ar@{{*}{-}{>}}[rd]_{\kern-2.5em\bfone_X^{\otimes r}\otimes \mu^{\bf{A}}_{X,s}\otimes\bfone_X^{\otimes t}} & {[X^n]}\ar@{{*}{-}{>}}[r]^{\rho^*_{X,n}} & {\left[X^n\times_k\bbG_m^{n-1}\right]}\\
{} & {[X]^{\otimes r}\otimes[X^s]\otimes[X]^{\otimes t}}\ar@{{*}{-}{>}}[d]_{\bfone_X^{\otimes r}\otimes \rho^*_{X,s}\otimes\bfone_X^{\otimes t}} & {\left[X^r\times_k X^s\times_k\bbG_m^{s-1}\times_k X^t\times_k\bbG_m^{u-1}\right]}\ar@{{*}{-}{>}}[u]_{(\varpi^n_{X,s})^*}\\
{} & {[X]^{\otimes r}\otimes\left[X^s\times_k\bbG_m^{s-1}\right]\otimes[X]^{\otimes t}}\ar@{{*}{-}{>}}[d]_{{\bfone_X^{\otimes r}\otimes\{t_{r+1},\ldots,t_{r+s-1}\}\otimes\bfone_X^{\otimes t}}}\ar@{{*}{-}{>}}[r]^{\times_{\mu}\circ(\bfone\otimes\times_{\mu})} & {\left[X^r\times_k X^s\times_k \bbG_m^{s-1}\times_k X^t\right]}\ar@{{*}{-}{>}}[d]^{\{t_{r+1},\ldots,t_{r+s-1}\}}\ar@{{*}{-}{>}}[u]\\
{} & {[X]^{\otimes r}\otimes\left[X^s\times_k\bbG_m^{s-1}\right]\otimes[X]^{\otimes t}}\ar@{{*}{-}{>}}[d]_{\bfone_X^{\otimes r}\otimes\partial_{X,s}\otimes\bfone_X^{\otimes t}}\ar@{{*}{-}{>}}[r]^{\times_{\mu}\circ(\bfone\otimes\times_{\mu})} & {\left[X^r\times_k X^s\times_k \bbG_m^{s-1}\times_k X^t\right]}\ar@{{*}{-}{>}}[d]^{\overline{\partial}_{X,s}}\\
{} & {[X]^{\otimes r}\otimes[\scr N_{X,s}]\otimes[X]^{\otimes t}}\ar@{{*}{-}{>}}[r]^{\times_{\mu}\circ(\bfone\otimes\times_{\mu})} & {\left[X^r\times_k\scr N_{X,s}\times_k X^t\right]}\\
{[X]^{\otimes u}}\ar@{{*}{-}{>}}[ru]^{\kern-2.5em\bfone_X^{\otimes r}\otimes \eta^*_{X,s}\otimes\bfone_X^{\otimes t}}\ar@{{*}{-}{>}}[r]_{\mu^{\bf{A}}_{X,u}}  & {[X^u]}\ar@{{*}{-}{>}}`[ru]_{(\id_{X^r}\times_k\eta_{X,s}\times_k\Id_{X^t})^*}[ru]  & {}}$$ where the nonlabeled arrows are the obvious pullbacks.
From the weak boundary triples
$$\xymatrix{{\scr D^{r,s,t}_{X,n}|_{\bbG_m^{u-1}}}\ar@{}[rd]|{\square}\ar@{^(->}[r]\ar[d]^{\varpi^{r,s,t}_{X,n}} & {\scr D^{r,s,t}_{X,n}}\ar@{}[rd]|{\square}\ar[d]^{\varpi^{r,s,t}_{X,n}} & {\scr N_{X,n}}\ar@{_(->}[l]\ar[d]^{\varpi^{r,s,t}_{X,n}}\\
{X^u\times\bbG_m^{u-1}}\ar@{^(->}[r]  & {\scr D_{X,u}} & {\scr N_{X,u}}\ar@{_(->}[l]} $$
and the extension of \cite[Proposition 4.4]{RostChowCoeff} mentioned in \S \ref{FourMaps}, \ref{WeakBoundary} and \ref{FourMapsHTP}, we get $\partial^{r,s,t}_{X,0}\circ(\varpi^{r,s,t}_{X,n})^*=(\varpi^{r,s,t}_{X,n})^*\circ\partial_{X,u}$. 
Similarly the weak boundary triples
$$\xymatrix@C=.7cm{{X^n\times_k\bbG_m^{n-1}}\ar@{^(->}[r]\ar[d]^{\varpi^n_{X,s}}\ar@{}[rd]|{\square} & {\scr D_{X,n}|_{\bbG_m^r\times_k\bbA^{s-1}\times_k\bbG_m^t}}\ar[d]^{\varpi^n_{X,s}}\ar@{}[rd]|{\square} & {\scr D^{r,s,t}_{X,n}|_{\bbG_m^{u-1}}}\ar@{_(->}[l]\ar[d]^{\varpi^n_{X,s}}\\
{X^r\kern-.2em\times_k\kern-.2em X^s\kern-.2em\times_k\kern-.2em\bbG_m^{s-1}\kern-.2em\times_k\kern-.2em X^t\times_k\kern-.2em\bbG_m^{r+t}}\ar@{^(->}[r]  & {X^r\kern-.2em\times_k\kern-.2em\scr D_{X,s}\kern-.2em\times_k\kern-.2em X^t\times_k\kern-.2em\bbG_m^{r+t}} & {X^r\kern-.2em\times_k\kern-.2em\scr N_{X,s}\kern-.2em\times_k\kern-.2em X^t\kern-.2em\times_k\kern-.2em\bbG_m^{r+t}}\ar@{_(->}[l]} $$
provides the equality $\partial^{n}_{X,r,s,t}\circ(\varpi^n_{X,s})^*=(\varpi^n_{X,s})^*\circ\tilde{\partial}_{X,s} $ where $\tilde{\partial}_{X,s}$ is the boundary morphism associated to the bottom line. From the weak boundary triples
$$\xymatrix@C=.6cm{{X^r\kern-.2em\times_k\kern-.2em X^s\kern-.2em\times_k\kern-.2em\bbG_m^{s-1}\kern-.2em\times_k\kern-.2em X^t\times_k\kern-.2em\bbG_m^{r+t}}\ar@{^(->}[r]\ar[d]\ar@{}[rd]|{\square} & {X^r\kern-.2em\times_k\kern-.2em\scr D_{X,s}\kern-.2em\times_k\kern-.2em X^t\times_k\kern-.2em\bbG_m^{r+t}}\ar[d]\ar@{}[rd]|{\square} & {X^r\kern-.2em\times_k\kern-.2em\scr N_{X,s}\kern-.2em\times_k\kern-.2em X^t\kern-.2em\times_k\kern-.2em\bbG_m^{r+t}}\ar@{_(->}[l]\ar[d]\\
{X^r\times_k X^s\times_k\bbG_m^{s-1}\times_k X^t}\ar@{^(->}[r]  & {X^r\times_k\scr D_{X,s}\times_k X^t} & {X^r\times_k\scr N_{X,s}\times_k X^t}\ar@{_(->}[l]} $$
and the extension of \cite[Proposition 4.4]{RostChowCoeff} mentioned in \S \ref{FourMaps}, \ref{WeakBoundary} and \ref{FourMapsHTP}, we get a commutative square
$$\xymatrix@C=.43cm{{} & {\left[X^n\times_k\bbG_m^{n-1}\right]}\ar@{{*}{-}{>}}[r]^{\{t_{r+1},\ldots,t_{r+s-1}\}} & {\left[X^n\times_k\bbG_m^{n-1}\right]}\\
{\left[X^r\kern-.2em\times_k\kern-.2em X^s\kern-.2em\times_k\kern-.2em \bbG_m^{s-1}\kern-.2em\times_k\kern-.2em X^t\right]}\ar@{{*}{-}{>}}[r]\ar@{{*}{-}{>}}[d]_{\{t_{r+1},\ldots,t_{r+s-1}\}} & {\left[X^r\kern-.2em\times_k\kern-.2em X^s\kern-.2em\times_k\kern-.2em\bbG_m^{s-1}\kern-.2em\times_k\kern-.2em X^t\kern-.2em\times_k\kern-.2em\bbG_m^{u-1}\right]}\ar@{{*}{-}{>}}[d]_{\{t_{r+1},\ldots,t_{r+s-1}\}}\ar@{{*}{-}{>}}[u]^{(\varpi^n_{X,s})^*}  & {}\\
{\left[X^r\kern-.2em\times_k\kern-.2em X^s\kern-.2em\times_k\kern-.2em \bbG_m^{s-1}\kern-.2em\times_k\kern-.2em X^t\right]}\ar@{{*}{-}{>}}[r]\ar@{{*}{-}{>}}[d]_{\overline{\partial}_{X,s}} & {\left[X^r\kern-.2em\times_k\kern-.2em X^s\kern-.2em\times_k\kern-.2em\bbG_m^{s-1}\kern-.2em\times_k\kern-.2em X^t\times_k\bbG_m^{u-1}\right]}\ar@{{*}{-}{>}}[d]_{\tilde{\partial}_{X,s}}\ar@{{*}{-}{>}}`[ru][ruu]^{(\varpi^n_{X,s})^*}  & {}\\
{\left[X^r\kern-.2em\times_k\kern-.2em\scr N_{X,s}\kern-.2em\times_k\kern-.2em X^t\right]}\ar@{{*}{-}{>}}[r] & {\left[X^r\kern-.2em\times_k\kern-.2em\scr N_{X,s}\kern-.2em\times_k\kern-.2em X^t\kern-.2em\times_k\kern-.2em\bbG_m^{u-1}\right]}\ar@{{*}{-}{>}}[r]^{\raisebox{1em}{$\scriptstyle\{t_1,\ldots,t_r,t_{r+s},\ldots,t_{n-1}\}$}} & {\left[X^r\kern-.2em\times_k\kern-.2em\scr N_{X,s}\kern-.2em\times_k\kern-.2em X^t\kern-.2em\times_k\kern-.2em\bbG_m^{u-1}\right]}\\
{[X^u]}\ar@{{*}{-}{>}}[r]_{\rho_{X,u}^*}\ar@{{*}{-}{>}}[u]_{\raisebox{-2em}{$\scriptstyle(\id_{X^r}\times_k\eta_{X,s}\times_k\Id_{X^t})^*$}} & {\left[X^u\times_k\bbG_m^{u-1}\right]}\ar@{{*}{-}{>}}[r]_{\{t_1,\ldots,t_r,t_{r+s},\ldots,t_{n-1}\}}\ar@{{*}{-}{>}}[u]^{\raisebox{2em}{$\scriptstyle(\id_{X^r}\times_k\eta_{X,s}\times_k\Id_{X^t}\times_k\Id_{\bbG_m^{u-1}})^*$}}  & {\left[X^u\times_k\bbG_m^{u-1}\right]}\ar@{{*}{-}{>}}[u]^{(\id_{X^r}\times_k\eta_{X,s}\times_k\Id_{X^t}\times_k\Id_{\bbG_m^{u-1}})^*}}$$
where the nonlabeled arrows are the obvious pullbacks. We also have the following commutative square:
$$\xymatrix@R=.7cm{{} & {\left[X^r\times_k X^s\times_k\bbG_m^{s-1}\times_k X^t\times_k\bbG_m^{u-1}\right]}\ar@{{*}{-}{>}}[d]_{\tilde{\partial}_{X,s}}\ar@{{*}{-}{>}}[r]^(.65){(\varpi^n_{X,s})^*}   & {\left[X^n\times_k\bbG_m^{n-1}\right]}\ar@{{*}{-}{>}}[d]^{\partial^n_{X,r,s,t}} \\
{} & {\left[X^r\times_k\scr N_{X,s}\times_k X^t\times_k\bbG_m^{u-1}\right]}\ar@{{*}{-}{>}}[r]^(.65){(\varpi^n_{X,s})^*} \ar@{{*}{-}{>}}[d]_{\{t_1,\ldots,t_r,t_{r+s},\ldots,t_{n-1}\}}   & {\left[\scr D^{r,s,t}_{X,n}|_{\bbG_m^{u-1}}\right]}\ar@{{*}{-}{>}}[d]^{\{t_1,\ldots,t_r,t_{r+s},\ldots,t_{n-1}\}} \\
{\left[X^u\times_k\bbG_m^{u-1}\right]}\ar@{{*}{-}{>}}[d]_{\partial_{X,u}}\ar@{{*}{-}{>}}[r]_{\raisebox{-1.3em}{$\scriptstyle(\id_{X^r}\times_k\eta_{X,s}\times_k\Id_{X^t}\times_k\Id_{\bbG_m^{u-1}})^*$}}   & {\left[X^r\times_k\scr N_{X,s}\times_k X^t\times_k\bbG_m^{u-1}\right]}\ar@{{*}{-}{>}}[r]^(.65){(\varpi^n_{X,s})^*}   & {\left[\scr D^{r,s,t}_{X,n}|_{\bbG_m^{u-1}}\right]}\ar@{{*}{-}{>}}[d]^{\partial^{r,s,t}_{X,0}} \\
{\left[\scr N_{X,u}\right]}\ar@{{*}{-}{>}}[rr]_{(\varpi^{r,s,t}_{X,n})^*} & {} & {\left[\scr N_{X,n}\right]}\\
{[X]}\ar@{{*}{-}{>}}`[rru]_{\eta_{X,n}^*}[rru] \ar@{{*}{-}{>}}[u]^{\eta_{X,u}^*}   & {} & {}}$$
Since Milnor K-theory is graded-commutative
\begin{equation*}
\begin{split}
\{t_1,\ldots,t_{n-1}\}&=(-1)^{(s-1)t}\{t_1,\ldots,t_r,t_{r+s},\ldots,t_{n-1},t_{r+1},\ldots,t_{r+s-1}\}\\
&=(-1)^{(s-1)t}\{t_1,\ldots,t_r,t_{r+s},\ldots,t_{n-1}\}\circ\{t_{r+1},\ldots,t_{r+s-1}\}.
\end{split}
\end{equation*}
Now $\partial^n_{X,r,s,t}$ being a boundary morphism we have 
$$\partial^n_{X,r,s,t}\circ\{t_1,\ldots,t_r,t_{r+s},\ldots,t_{n-1}\}=(-1)^{r+t}\{t_1,\ldots,t_r,t_{r+s},\ldots,t_{n-1}\}\circ\partial^n_{X,r,s,t};$$
and therefore
\begin{equation*}
\begin{split}
\partial^n_{X,r,s,t}\circ \{t_1,\ldots,t_{n-1}\}&=(-1)^{(s-1)t}\partial^n_{X,r,s,t}\circ\{t_1,\ldots,t_r,t_{r+s},\ldots,t_{n-1}\}\circ\{t_{r+1},\ldots,t_{r+s-1}\}\\
&=(-1)^{r+st}\{t_1,\ldots,t_r,t_{r+s},\ldots,t_{n-1}\}\circ\partial^n_{X,r,s,t}\circ\{t_{r+1},\ldots,t_{r+s-1}\}.
\end{split}
\end{equation*}
This together with the previous diagrams proves that
$$m^{\bf{A},\cap}_{X,r,s,t}=(-1)^{r+st}m^{\bf{A},\cap}_{X,u}\circ\left(\bfone^{\otimes r}\otimes m^{\bf{A},\cap}_{X,s}\otimes\bfone^{\otimes t}\right)$$
and the lemma is shown.
\end{proof}
Now the following lemma has to be viewed as a higher order generalization of lemma 11.6 of \cite{RostChowCoeff}:
\begin{lemm}
We have 
$$\sum_{r+s+t=n}m^{\bf{A},\cap}_{X,r,s,t}=0$$
where $u=r+1+t$ and the sum is taken over all nonnegative integers $r,s,t$ such that $r+s+t=n$.
\end{lemm}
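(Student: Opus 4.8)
The plan is to strip from $m^{\bf{A},\cap}_{X,r,s,t}$ the part that is independent of the decomposition $(r,s,t)$, thereby reducing the statement to a relation between iterated boundary morphisms on the higher deformation space $\scr D_{X,n}$, and then to deduce that relation from the fact that $d\circ d=0$ for Rost's cycle complex, i.e. from axiom $(\mathbf C)$ for the cycle module $M$.

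\textbf{Step 1: reduction.} By definition $J^{r,s,t}_{X,n}=\partial^{r,s,t}_{X,0}\circ\partial^n_{X,r,s,t}\circ\{t_1,\ldots,t_{n-1}\}\circ\rho^*_{X,n}$ and $m^{\bf{A},\cap}_{X,r,s,t}=(\eta_{X,n}^*)^{-1}\circ J^{r,s,t}_{X,n}\circ\mu^{\bf{A}}_{X,n}$, and in these formulas only the composite $\partial^{r,s,t}_{X,0}\circ\partial^n_{X,r,s,t}$ depends on $(r,s,t)$. It therefore suffices to prove that
$$\sum_{r+s+t=n}\partial^{r,s,t}_{X,0}\circ\partial^n_{X,r,s,t}=0\colon\left[X^n\times_k\bbG_m^{n-1}\right]\longrightarrow\left[\scr N_{X,n}\right].$$
Each summand is the composite of two weak boundary morphisms along $\omega_{X,n}\colon\scr D_{X,n}\to X^n\times_k\bbA^{n-1}$: the first degenerates the contiguous block of coordinates $t_{r+1},\ldots,t_{r+s-1}$, the second the complementary coordinates $t_1,\ldots,t_r,t_{r+s},\ldots,t_{n-1}$; together they degenerate all $n-1$ parameters, so every summand goes from the open fibre $X^n\times_k\bbG_m^{n-1}$ to the closed fibre $\scr N_{X,n}$.

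\textbf{Step 2: the relation on $\scr D_{X,n}$.} I would organize all the partial degenerations of $\scr D_{X,n}$ into a system of nested weak boundary triples, so that each composite $\partial^{r,s,t}_{X,0}\circ\partial^n_{X,r,s,t}$ appears as a component of the square of the total boundary and the displayed sum is the contribution of $d\circ d=0$ from the open fibre $X^n\times_k\bbG_m^{n-1}$ into the deepest stratum $\scr N_{X,n}$, up to contributions along intermediate strata that are dealt with in Step 3. The inputs are the compatibilities of boundary morphisms with flat pullbacks and with open and closed immersions from \S\ref{FourMaps}, extended to weak boundary triples and to the homotopy invariant complex in \S\ref{WeakBoundary}, \S\ref{FourMapsHTP}, together with the extension of \cite[Proposition 4.4]{RostChowCoeff}; and the structural ingredient is the recursive description of $\scr D_{X,n}$ in \S\ref{HighDefn} through Remarks \ref{RemaT} and \ref{RemaNC} and the cartesian squares relating $\scr D^{r,s,t}_{X,n}$ to $\scr D_{X,u}$ and $\scr D_{X,s}$ via $\varpi^{r,s,t}_{X,n}$ and $\varpi^n_{X,s}$. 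Since $X$ is smooth the transversality condition $\mathbf T$ holds for the immersions $\delta^i_{X,n}$, and this is what forces the only surviving iterated boundaries to be those that proceed through a contiguous block followed by its complement — precisely the terms indexed by $(r,s,t)$. The computation runs parallel to the proof of the preceding lemma and, for $n=3$, reduces to Rost's double deformation Lemma 11.6; one may present the general case either directly or by induction on $n$, using the morphisms $\varpi^{r,s,t}_{X,n}$ and $\varpi^n_{X,s}$ to pass from the $n$-fold deformation to products of lower ones.

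\textbf{Step 3: signs, and the main obstacle.} The displayed relation carries no sign, consistently with $d\circ d=0$; the $A_\infty$ signs $(-1)^{r+st}$ enter only in the preceding lemma, when $\partial^{r,s,t}_{X,0}\circ\partial^n_{X,r,s,t}$ is rewritten in terms of $m^{\bf{A},\cap}_{X,u}$ and $m^{\bf{A},\cap}_{X,s}$, and they come from the graded-commutativity of Milnor $K$-theory used to reorder the symbols $\{t_1,\ldots,t_{n-1}\}$. I expect the main difficulty to be that weak boundary morphisms are \emph{not} closed: $d\circ d=0$ then produces, besides the wanted terms, contributions supported along intermediate strata that must be shown to cancel. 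These extra contributions are governed by the degenerate members $s\in\{1,n\}$ of the sum and by the internal differentials of the intermediate deformation spaces — a phenomenon already visible in Rost's Lemma 11.6 — and checking, uniformly in $n$, that everything cancels while keeping track of all bidegrees is the technical heart of the argument. A secondary point is to verify, using condition $\mathbf T$ and the explicit build-up of $\scr D_{X,n}$ out of iterated normal bundles, that no degeneration through a non-contiguous set of coordinates contributes.
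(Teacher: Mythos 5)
Your strategy is the paper's own: the reduction in your Step 1 (stripping off $(\eta^*_{X,n})^{-1}$ and $\{t_1,\ldots,t_{n-1}\}\circ\rho^*_{X,n}\circ\mu^{\bf{A}}_{X,n}$, which do not depend on $(r,s,t)$, so that everything rests on $\sum_{r+s+t=n}\partial^{r,s,t}_{X,0}\circ\partial^n_{X,r,s,t}=0$) is exactly the paper's last step, and deriving that identity from $d\circ d=0$ on a cycle complex attached to $\scr D_{X,n}$ is also the paper's idea. The gap is that you stop precisely where the proof has to be given: Step 2 says you \emph{would} organize the degenerations into nested weak boundary triples, and Step 3 declares the cancellation of contributions through intermediate strata, and the exclusion of non-contiguous degenerations, to be ``the technical heart'' without carrying either out. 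As written this is a plan plus a correct diagnosis of what must be checked, not a proof.

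For comparison, the paper closes the argument in one stroke. For each decomposition $n=r+s+t$ it sets $B_{r,s,t}=\bbG_m^r\times_k\{0\}^{s-1}\times_k\bbG_m^t\subset\bbA^{n-1}$ (the locus where the vanishing coordinates form exactly the contiguous block $\{r+1,\ldots,r+s-1\}$); these locally closed pieces are pairwise disjoint, and for their union $B$ one has the direct sum decomposition $C_p(\scr D_{X,n}|_B,M,m)=\bigoplus_{r+s+t=n}C_p(\scr D_{X,n}|_{B_{r,s,t}},M,m)$. The maps $\partial^n_{X,r,s,t}$ and $\partial^{r,s,t}_{X,0}$ are, by their very definition as weak boundary maps, the matrix entries of the differential of $C_*(\scr D_{X,n}|_B,M,m)$ from the open stratum to $\scr D_{X,n}|_{B_{r,s,t}}$ and from there to $\scr N_{X,n}$; hence $\sum_{r+s+t=n}\partial^{r,s,t}_{X,0}\circ\partial^n_{X,r,s,t}$ is literally the (open stratum, deepest stratum) entry of $d\circ d$ for this complex, and vanishes. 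Two consequences for your write-up. First, the non-closedness of weak boundary maps is not an extra source of error terms to be cancelled separately: the $s=1$ and $s=n$ summands of the sum (internal differential of the open stratum followed by the total specialization $\partial_{X,n}$, and $\partial_{X,n}$ followed by the internal differential of $\scr N_{X,n}$) \emph{are} those error terms, already counted as members of the identity. Second, no induction on $n$ and no reduction to Rost's Lemma 11.6 via the maps $\varpi^{r,s,t}_{X,n}$, $\varpi^n_{X,s}$ is needed. The one point you correctly isolate --- that for $n\geqslant 4$ the $B_{r,s,t}$ do not exhaust $\bbA^{n-1}$, since vanishing loci that are not contiguous blocks are omitted --- is indeed the only delicate step: one must know that the compressed differential on $C_*(\scr D_{X,n}|_B,M,m)$ squares to zero, i.e.\ that two-step specializations from the open stratum to $\scr N_{X,n}$ passing through a point lying over a non-contiguous stratum contribute nothing. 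The paper disposes of this by asserting that $C_*(\scr D_{X,n}|_B,M,m)$ is a complex; if you want a complete argument you must supply exactly this verification, rather than leave it as an announced obstacle.
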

\begin{proof}
For each decomposition $n=r+s+t$ where $r,s,t$ are nonnegative integers, the subscheme $\bbG_m^r\times_k\{0\}^{s-1}\times_k\bbG_m^t $ of $\bbA^{n-1}$  is a  locally closed subset $B_{r,s,t}$. The various $B_{r,s,t}$ are disjoint locally closed subsets in $\bbA^{n-1}$ and we consider their union $B$ with its reduced scheme structure. We have a decomposition into a direct sum of abelian groups:
$$C_p\left(\scr D_{X,n}|_B,M,m\right)=\bigoplus_{r+s+t=n}C_p\left(\scr D_{X,n}|_{B_{r,s,t}},M,m\right).$$
The fact that $C_*(\scr D_{X,n}|_B,M,m) $ is a complex, implies that its differential $d$ satisfies $d^2=0$. We therefore have
$$\sum_{r+s+t=n}\partial^{r,s,t}_{X,0}\circ\partial^n_{X,r,s,t}=0.$$
Precomposing with $\{t_1,\ldots,t_{n-1}\}\circ\rho^*_{X,n}$ we get 
$$\sum_{r+s+t=n}J_{X,r,s,t}=0.$$
Now the result follows from the definition of the maps $m^{\bf{A},\cap}_{X,r,s,t}$.
\end{proof}
\subsection{Projection formula for higher intersection products}
As one may expect, higher intersection products satisfy some kind of projection formula. More precisely the following proposition holds:
\begin{prop}\label{ProjFormulaHomotopy}
Let $Y\ra X$ be a proper and flat morphism of smooth $k$-schemes of finite type. Let $u\geqslant 1$ be an integer and $r,t$ be nonnegative integers such that $r+1+t=u$. We have
$$m^{\bf{A},\cap}_{X,u}\circ\left(\bfone_X^{\otimes r}\otimes f_*\otimes\bfone_X^{\otimes t}\right)=f_*\circ m^{\bf{A},\cap}_{Y,u}\circ\left((f^*)^{\otimes r}\otimes\bfone_Y\otimes(f^*)^{\otimes t}\right).$$
\end{prop}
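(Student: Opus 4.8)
The plan is to unwind the definition
$$m^{\bf{A},\cap}_{X,u}=(\eta_{X,u}^*)^{-1}\circ\partial_{X,u}\circ\{t_1,\ldots,t_{u-1}\}\circ\rho_{X,u}^*\circ\mu^{\bf{A}}_{X,u}$$
and to propagate $f_*$ and $f^*$ through each of its five constituents. Put $Z:=X^r\times_k Y\times_k X^t$ and introduce $g:=\id_{X^r}\times_k f\times_k\id_{X^t}\colon Z\to X^u$, which is proper, and $h:=f^r\times_k\id_Y\times_k f^t\colon Y^u\to Z$, which is flat; observe $g\circ h=f^u$. Since the external product $\mu^{\bf{A}}$ is compatible with proper pushforward and with flat pullback in each factor (by \S\ref{ProductHtp} and the corresponding properties of $\mu$ in \cite{RostChowCoeff}),
$$\mu^{\bf{A}}_{X,u}\circ(\bfone^{\otimes r}\otimes f_*\otimes\bfone^{\otimes t})=g_*\circ\mu',\qquad \mu^{\bf{A}}_{Y,u}\circ((f^*)^{\otimes r}\otimes\bfone_Y\otimes(f^*)^{\otimes t})=h^*\circ\mu',$$
where $\mu'\colon[X]^{\otimes r}\otimes[Y]\otimes[X]^{\otimes t}\to[Z]$ is the obvious iterated external product. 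As $\mu'$ occurs on both sides, it suffices to prove the equality of the two maps $[Z]\to[X]$ obtained by deleting $\mu'$.

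Now restrict the higher deformation space along $g$: the space $\scr D_{X,u}|_{Z\times_k\bbA^{u-1}}$ (base change of $\scr D_{X,u}$ along $g\times\id$) lies over $Z\times_k\bbA^{u-1}$, equals $Z\times_k\bbG_m^{u-1}$ over $\bbG_m^{u-1}$, and its fibre over $\{0\}^{u-1}$ is canonically the pullback vector bundle $f^*\scr N_{X,u}$ over $Y$, via the identification $g^{-1}(\text{small diagonal of }X^u)\cong Y$ furnished by the graph of $f$. Every square here is cartesian and $g$ is proper, so base change gives $\rho_{X,u}^*\circ g_*=(g\times\id)_*\circ\rho_Z^*$; the projection formula for units (with $(g\times\id)^*t_i=t_i$) gives $\{t_1,\ldots,t_{u-1}\}\circ(g\times\id)_*=(g\times\id)_*\circ\{t_1,\ldots,t_{u-1}\}$; and the compatibility of weak boundary morphisms with proper pushforward (from \cite[\S 4]{RostChowCoeff}, valid for weak triples as in \S\ref{WeakBoundary}, \S\ref{FourMapsHTP}) gives $\partial_{X,u}\circ(g\times\id)_*=g^0_*\circ\partial^Z$, where $\partial^Z$ is the boundary of the restricted triple and $g^0\colon f^*\scr N_{X,u}\to\scr N_{X,u}$ is the map on $\{0\}^{u-1}$-fibres induced by $g\times\id$. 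Finally the square formed by the bundle projections $\eta'\colon f^*\scr N_{X,u}\to Y$, $\eta_{X,u}\colon\scr N_{X,u}\to X$ and by $f$ is cartesian, so base change together with Lemma~\ref{LemmaHTPBundle} (both vertical pullbacks being isomorphisms) gives $(\eta_{X,u}^*)^{-1}\circ g^0_*=f_*\circ(\eta'^*)^{-1}$. Assembling these, the left-hand side equals $f_*\circ(\eta'^*)^{-1}\circ\partial^Z\circ\{t_1,\ldots,t_{u-1}\}\circ\rho_Z^*$, while the right-hand side is $f_*\circ(\eta_{Y,u}^*)^{-1}\circ\partial_{Y,u}\circ\{t_1,\ldots,t_{u-1}\}\circ\rho_{Y,u}^*\circ h^*$; hence it suffices to establish
$$(\eta'^*)^{-1}\circ\partial^Z\circ\{t_1,\ldots,t_{u-1}\}\circ\rho_Z^*=(\eta_{Y,u}^*)^{-1}\circ\partial_{Y,u}\circ\{t_1,\ldots,t_{u-1}\}\circ\rho_{Y,u}^*\circ h^*\colon[Z]\longrightarrow[Y].$$

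This last identity is the crux and the main obstacle. Using $\rho_{Y,u}^*\circ h^*=(h\times\id)^*\circ\rho_Z^*$ and the commutation of pullbacks with multiplication by units, it reduces to a comparison, on $Y^u$, between the boundary-type map attached to the deformation of $Y^u$ along its own diagonals $\delta^i_Y$ and the one attached to the deformation of $Y^u$ along the flat preimages $(f^u)^{-1}(\delta^i_X)$, which strictly contain the $\delta^i_Y$ unless $f$ is a monomorphism. The morphism of deformation spaces induced by these inclusions is in general \emph{neither flat nor proper}, so \cite[Prop.~4.4]{RostChowCoeff} cannot be invoked directly. I would instead argue by a double deformation to the normal cone, in the spirit of \S\S 10.6 and 11 of \cite{RostChowCoeff} and of the two lemmas used above for Theorem~\ref{HAinfInter}: since $f$ is flat between smooth $k$-schemes it is a local complete intersection morphism, so the relative diagonal $Y\hookrightarrow Y\times_X\cdots\times_X Y$ ($u$ factors) is a regular immersion and $Y\times_X\cdots\times_X Y=(f^u)^{-1}(\text{small diagonal of }X^u)\hookrightarrow Y^u$ is regular (a flat pullback of $\bigcap_i\delta^i_X(X^{u-1})$), so the transversality condition $\bf{T}$ stays under control at every stage; homotopy invariance (Lemma~\ref{LemmaHTPBundle}) then makes every specialization map canonical, and the double deformation space interpolating between $\scr D_{Y,u}$ and the $g$-pullback of $\scr D_{X,u}$ yields the comparison homotopy, which, being expressible entirely in terms of the four basic maps, collapses to an equality exactly as the homotopy of \cite[Lemma~11.7]{RostChowCoeff} does. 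The only signs entering are trivial, so matching them concludes the proof.
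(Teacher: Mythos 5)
The first half of your argument is sound and is, in substance, the bottom half (squares Sq5--Sq8) of the diagram in the paper's proof: you push $f_*$ through $\mu^{\bf{A}}$, $\rho^*$, $\{t_1,\ldots,t_{u-1}\}$, the weak boundary map and $(\eta^*)^{-1}$ by means of the proper map $g\colon Z=X^r\times_kY\times_kX^t\to X^u$, using that the base change $\scr D_{X,u}|_{Z\times_k\bbA^{u-1}}$ has special fibre $f^*\scr N_{X,u}$. The paper does the same thing (after first reducing to $r=0$ by commutativity of the product) with the deformation space $\scr D$ of the pulled-back diagonals $\theta_i=h^{-1}(\delta^i_{X,u})$ and the proper map $D(h,h_1,\ldots,h_u)$; the two formulations are interchangeable.

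The gap is exactly where you announce ``the crux and the main obstacle''. You correctly locate the difficulty --- the diagonals $\delta^i_Y(Y^{u-1})$ are strictly contained in the preimages of $\delta^i_{X,u}(X^{u-1})$ as soon as $f$ is not a monomorphism, and the induced map of deformation spaces is neither flat nor proper --- but the proposed repair is not a proof. No double deformation space is actually constructed, and it is not said which two composites of the four basic maps it would interpolate between; even granting such a construction, it would yield a chain homotopy, whereas the statement to be proved (and what the perturbation-lemma descent in section 4 consumes) is an \emph{equality} of graded maps. The claim that the homotopy ``collapses to an equality'' because it is expressible in terms of the four basic maps is not an argument: Rost's Lemma 11.7 proves an equality by identifying two boundary maps on a single deformation space, not by collapsing a homotopy. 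Moreover several supporting assertions fail for ramified $f$: for $f\colon y\mapsto y^2$ the relative diagonal $Y\hookrightarrow Y\times_XY$ is \emph{not} a regular immersion ($y_1-y_2$ is a zero-divisor in $k[y_1,y_2]/(y_1^2-y_2^2)$), and condition $\bf{T}$ fails for the family $(f^u)^{-1}(\delta^i_{X,u})$, which is not even smooth. For contrast, the paper closes this step with a direct diagram chase and no homotopies: it forms $\scr D$ over $Y\times_kX^{u-1}$, takes $D(g,g_1,\ldots,g_u)\colon\scr D_{Y,u}\to\scr D$ induced by $g=\Id_Y\times_kf^{u-1}$, and invokes the compatibility of boundary maps with flat pullback (Sq3). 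Note, however, that the flatness of $D(g,g_1,\ldots,g_u)$ asserted there is precisely the delicate point you have put your finger on: on special fibres it is the map $\scr N_{Y,u}\to\scr N$ induced by $df$, which fails to be flat where $f$ ramifies (e.g.\ $(y_1,v)\mapsto(y_1,2y_1v)$ for $y\mapsto y^2$). So your instinct that something nontrivial must be said here is right, but identifying the obstacle is not the same as removing it; as written, the proposal does not establish the proposition.
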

\begin{proof}
We may assume $u\geqslant 2$. Otherwise the formula states simply that $f_*$ is a closed morphism. Moreover since products are commutative, it is enough to prove the following formula: 
$$m^{\bf{A},\cap}_{X,u}\circ\left(f_*\otimes\bfone_X^{\otimes u-1}\right)=f_*\circ m^{\bf{A},\cap}_{Y,u}\circ\left(\bfone_Y\otimes(f^*)^{\otimes u-1}\right).$$
Let $\Gamma_f:Y\hookrightarrow Y\times_k X$ be the graph of the morphism $f$ and consider the closed immersions $\theta_i$ given by the cartesian squares
$$\xymatrix{{Y\times_k X^{u-2}}\ar[r]^{\theta_i}\ar[d]_{h_i=f\times_k\Id}\ar@{}[rd]|{\square} & {Y\times_k X^{u-1}}\ar[d]^{h=f\times_k\Id}\\ 
{X^{u-1}}\ar[r]^{\delta^i_{X,u}} & {X^u.}} $$
Let $\scr D$ be the simultaneous deformation space to the normal cone associated to the closed immersions $\theta_1,\ldots,\theta_u$ and $\scr N$ its fiber over $\{0\}^{u}$.
By functoriality as recalled in \S \ref{FuncDefNorm}, we have a commutative diagram:
$$\xymatrix@C=1.5cm{{\scr D}\ar[r]_(.3){D'(h,h_1,\ldots,h_u)}\ar`d[rd][rd]\ar@/^2em/[rr]^{D(h,h_1,\ldots,h_u)} & {\scr D_{X,u}|_{Y\times_k X^{u-1}\times_k\bbA^{u-1}}}\ar[r]\ar[d]\ar@{}[rd]|{\square} & \scr D_{X,u}\ar[d]\\
{} & {Y\times_k X^{u-1}\times_k\bbA^{u-1}}\ar[r]^{f\times_k\Id} & {X^{u}\times_k\bbA^{u-1}.}} $$
The morphism $D'(h,h_1,\ldots,h_u) $ is a closed immersion and therefore $D(h,h_1,\ldots,h_u)$ is a proper morphism.
The morphism $\scr N\ra \scr N_{X,u}|_Y$ induced by $D'(h,h_1,\ldots,h_u)$
being a closed immersion between two vector bundles of the same rank, is an isomorphism. On the other hand for any $i\in\{1,\ldots,u\}$ we have  a square
$$\xymatrix{{Y^{u-1}}\ar[r]^{\delta_{Y,u}^i}\ar[d]_{g_i=\Id\times_k f^{u-2}} & {Y^u}\ar[d]^{g=\Id\times_k f^{u-1}}\\ 
{Y\times_k X^{u-2}}\ar[r]^{\theta_i} & {Y\times_k X^{u-1}}}$$
and so by functoriality a diagram
$$\xymatrix@C=1.5cm{{\scr D_{Y,u}}\ar[r]_(.4){D'(g,g_1,\ldots,g_u)}\ar`d[rd][rd]\ar@/^2em/[rr]^{D(g,g_1,\ldots,g_u)} & {\scr D|_{Y^u\times_k\bbA^{u-1}}}\ar[r]\ar[d]\ar@{}[rd]|{\square} & {\scr D}\ar[d]\\
{} & {Y^{u}\times_k\bbA^{u-1}}\ar[r]^(.4){\Id\times_k f^{u-1}\times_k\Id} & {Y\times_k X^{u-1}\times_k\bbA^{u-1}.}} $$
in which all maps are flat. Now let $\partial:\left[Y\times_k X^{u-1}\times_k\bbG_m^{u-1}\right]\ra[\scr N]$ be the boundary map provided by the weak boundary triple
$$Y\times_k X^{u-1}\times_k\bbG_m^{u-1}\hookrightarrow \scr D\hookleftarrow \scr N.$$
It is easy to check that we have two commutative squares
$$\xymatrix{{[Y]\otimes [X]\otimes\cdots\otimes[X]}\ar[r]\ar[d]^{f_*\otimes\bfone_X^{\otimes u-1}}  & 
{\left[Y\times_k X^{u-1}\right]}\ar[d]^{h_*}\\
{[X]\otimes[X]\otimes\cdots\otimes [X]}\ar[r]& {[X^u]}}\qquad
\xymatrix{{[Y]\otimes [X]\otimes\cdots\otimes[X]}\ar[r]\ar[d]^{\bfone_Y\otimes(f^*)^{\otimes u-1}}  & 
{\left[Y\times_k X^{u-1}\right]}\ar[d]^{g^*}\\
{[Y]\otimes[Y]\otimes\cdots\otimes [Y]}\ar[r]& {[Y^u]}} $$
where the horizontal arrows are given by the product maps on homotopy invariant cycle complexes.
Let $\eta:\scr N\ra Y$ and $\rho:Y\times_k X^{u-1}\times_k\bbG_m^{u-1}\ra Y\times_k X^{u-1}$ be the respective projections, we have then a commutative diagram:
$$\xymatrix@C=.6cm@R=1.7cm{{\left[Y^u\right]}\ar[r]^(.4){\rho_{Y,u}^*}\ar@{}[rd]|{\textrm{\fbox{Sq1}}} & {\left[Y^u\times_k\bbG_m^{u-1}\right]}\ar[r]^{\{t_1,\ldots,t_{u-1}\}}\ar@{}[rd]|{\textrm{\fbox{Sq2}}} & {\left[Y^u\times_k\bbG_m^{u-1}\right]}\ar[r]^(.6){\partial_{Y,u}}\ar@{}[rd]|{\textrm{\fbox{Sq3}}} & {[\scr N_{Y,u}]}\ar@{}[rd]|{\textrm{\fbox{Sq4}}} & {}\\
{\left[Y\times_k X^{u-1}\right]}\ar[r]^(.4){\rho^*}\ar[u]^{g^*}\ar[d]^{h_*}\ar@{}[rd]|{\textrm{\fbox{Sq5}}} & {\left[Y\times_k X^{u-1}\times_k\bbG_m^{u-1}\right]}\ar[r]^{\raisebox{1em}{$\scriptstyle\{t_1,\ldots,t_{u-1}\}$}}\ar[u]^{(g\times_k\Id)^*}\ar[d]^{(h\times_k\Id)_*}\ar@{}[rd]|{\textrm{\fbox{Sq6}}} & {\left[Y\times_k X^{u-1}\times_k\bbG_m^{u-1}\right]} \ar[r]^(.7){\partial}\ar[u]^{(g\times_k\Id)^*}\ar[d]_{(h\times_k\Id)_*}\ar@{}[rd]|{\textrm{\fbox{Sq7}}} & {[\scr N]}\ar[u]^(.26){D(g,g_1,\ldots,g_u)^*}\ar[d]_(.26){D(h,h_1,\ldots,h_u)_*}\ar@{}[rd]|{\textrm{\fbox{Sq8}}} & {[Y]}\ar[d]^{f_*}\ar[l]_{\eta^*}\ar`u[lu][lu]_(.3){\eta_{Y,u}^*} \\
{[X^u]}\ar[r]_(.4){\rho_{X,u}^*} & {\left[X^u\times_k\bbG_m^{u-1}\right]}\ar[r]_{\{t_1,\ldots,t_{u-1}\}} & {\left[X^u\times_k\bbG_m^{u-1}\right]}\ar[r]_(.6){\partial_{X,u}} & {[\scr N_{X,u}]} & {[X]}\ar[l]^{\eta_{X,u}^*}} $$
The commutativity of squares Sq1 and Sq4 follows from functoriality of pullbacks. The commutativity of squares Sq5 and Sq8 follows from the base change formula proved in proposition 4.1.(3) of \cite{RostChowCoeff}, while the commutativity of squares Sq2 and Sq6 is a consequence of lemmas 4.2.(1) and 4.3.(1) of \emph{loc.cit.}. The commutativity of squares Sq3 and Sq5 is provided by proposition 4.4 of \emph{loc.cit.}, going back to the definitions it proves the desired formula. 
\end{proof}
\section{Perturbation theory and intersection theory for cycle complexes}
In the previous section we have explained how to construct an $A_{\infty}$-algebra structure on the homotopy invariant cycle complex of a smooth $k$-scheme of finite type over $k$ with coefficients in a cycle module $M$ with a ring structure. Now we would like to deduce from it an $A_{\infty}$-algebra structure on Rost's cycle complex via the SDR-datum of \S \ref{HtpSDR}:
$$\left(\xymatrix@C=.6cm{{C^*(X,M)}\ar@<.5ex>[r]^{\alpha^{\bf{A}}_X} & {\sC^*(X,M)}\ar@<.5ex>[l]^{r^{\bf{A}}_X}},H^{\bf{A}}_X\right).$$ 
So we want to
\begin{itemize}
\item{descend the $A_{\infty}$-algebra structure from the big complex $\sC^*(X,M)$ to the smaller one $C^*(X,M)$, in order to obtain the desired higher intersection products
$$m^{\cap}_{X,n}:C^*(X,M)^{\otimes n}\ra C^*(X,M)$$
on Rost's cycle complex;}
\item{construct two morphisms of $A_{\infty}$ algebras 
$$\xymatrix@C=.6cm{{C^*(X,M)}\ar@<.5ex>[r]^{\alpha_X} & {\sC^*(X,M)}\ar@<.5ex>[l]^{r_X}} $$
such that:
\begin{enumerate}
\item{$r^{\infty}_X\circ\alpha^{\infty}_X=\bfone$;}
\item{$\alpha^{\infty}_{X,1}=\alpha_X$ and $r^{\infty}_{X,1}=r_X$;}
\item{the morphisms of $A_{\infty}$-algebras $\bfone$ and $\alpha_X^{\infty}\circ r^{\infty}_X$ are homotopic.}
\end{enumerate}}
\end{itemize}
This is a classical problem in \emph{homological perturbation theory} which goes back to J. Stasheff and the early development of $A_{\infty}$-algebras. Our main references will be \cite{MR0301736,MR893160,MR1103672}, papers which contain the results needed for the present application. There are other ways to solve this question, see \cite{MR1854636} and \cite{MR1868174} for instance, however perturbation lemma and its application nicknamed \emph{tensor trick} have a key advantage for us: they provide explicit formulas for which we can checked that the new higher intersection products still satisfy basic properties such as the projection formula.\par
For readers' convenience we have included in the next two subsections a survey of the application of the perturbation lemma to our present situation. The explicit formulas that are obtained in terms of the \emph{bar construction} are used in the third subsection to prove the projection formula. 
\subsection{The bar construction}
Let us now recall the definition of an $A_{\infty}$-category in term of the bar construction.
\subsubsection{} A coalgebra in $\catgrC$ is the data of an object $A$ and a comultiplication $\Delta:A\ra A\otimes A$ of bidegree $(0,0)$ which is coassociative \emph{i.e.} $(\Delta\otimes\bfone)\circ\Delta=(\bfone\otimes\Delta)\circ\Delta$. A coderivation of a coalgebra is a graded map $b:A\ra A$ which satisfies the analog of the Leibniz rule: $\Delta\circ b=(\bfone\otimes b+b\otimes\bfone)\circ\Delta$. Let $C$ be an object in $\catgrC$. The reduced tensor coalgebra $\redT(C)$ of $C$ is defined as the object of $\catgrC$:
$$\redT(C)=\bigoplus_{n\geqslant 1}C^{\otimes n}$$
together with the comultiplication $\Delta:\redT(C)\ra\redT(C)\otimes\redT(C)$ whose $n$-th component $C^{\otimes n}\ra\redT(C)\ra\redT(C)\otimes\redT(C)$ is  the sum of the morphisms $C^{\otimes n}\ra C^{\otimes i}\otimes C^{\otimes j}$ with $i+j=n$ provided by the associativity constraint of $\catgrC$.
\begin{rema}\label{RemCoDer}
Each graded map  $b:\redT(C)\ra C$ lifts uniquely to a coderivation $\redT(C)\ra\redT(C)$ of the same bidegree. More precisely its lifting to a coderivation of $\redT(C)$ is obtained as follows: each decomposition $n=r+s+t$ of an integer $n\geqslant 1$ into a sum of nonnegative integers provides a graded map $\bfone^{\otimes r}\otimes b_s\otimes \bfone^{\otimes t}:C^{\otimes n}\ra C^{\otimes u}$ where $b_s:C^{\otimes s}\ra C$ is the $s$-th component of $b$, the sum of these graded maps provides a graded morphism
$$\sum_{r+s+t=n}\bfone^{\otimes r}\otimes b_s\otimes \bfone^{\otimes t}:C^{\otimes n}\ra\redT(C)$$
and the collection of these graded morphisms as $n$ varies defines the coderivation 
$$\sum_{n\geqslant 1}\sum_{r+s+t=n}\bfone^{\otimes r}\otimes b_s\otimes \bfone^{\otimes t}:\redT(C)\ra\redT(C)$$
which lifts the morphism $b$ we have started with.
\end{rema}
\subsubsection{}\label{Desuspension}
To get rid of the signs in the definition of an $A_{\infty}$-algebra given in \S \ref{DefnAinf}, it is useful to introduce the suspension functor. 
Let $n\geqslant 1$ be an integer and $C,D$ two objects in $\catgrC$. We have an isomorphism
$$\catgrC^{(r,s)}\left(C^{\otimes i},D\right)\xrightarrow{\sim}\catgrC^{(r+i-1,s)}\left((\sh C)^{\otimes i},\sh D\right) $$
which maps a graded morphism $f:C^{\otimes i}\ra D$ to the graded morphism
\begin{equation}\label{BijSign}
(-1)^{r+s+i-1}s_D\circ f\circ(s^{-1}_C)^{\otimes i}:(\sh C)^{\otimes i}\ra\sh D;
\end{equation}
where $s_C$ and $s_D$ are defined as in the appendix.
\subsubsection{} The bar coalgebra of an object $A$ in $\catgrC$ is the reduced tensor coalgebra $\Barc(A):=\redT(\sh A)$. Assume to be given a family of graded maps $m_n:A^{\otimes n}\ra A$ of bidegree $(2-n,0)$. The above isomorphism provides a family of graded maps $b_n:(\sh A)^{\otimes}\ra \sh A$ of bidegree $(1,0)$ and thus a graded map $\Barc(A)\ra \sh A$ of bidegree $(1,0)$ 
which is the restriction of a unique coderivation $b:\Barc(A)\ra \Barc(A)$ of degree $(1,0)$ according to remark \ref{RemCoDer}.\par
Using the formula above, one sees that the maps $m_n$ define a $A_{\infty}$-algebra structure on $C$ if and only if the corresponding coderivation $b$ on $\Barc(C)$ is a differential \emph{i.e.} satisfies $b^2=0$:
\begin{lemm}
With the notation above the following are equivalent:
\begin{enumerate}
\item{the maps $m_n:A^{\otimes n}\ra A$ yield an $A_{\infty}$-structure on $A$;}
\item{the corresponding coderivation $b$ on $\Barc(A)$ satisfies $b^2=0$;}
\item{for each $n\geqslant 1$, we have \footnote{Note that no signs appear in this formula. The signs in the definition of an $A_{\infty}$ come from the bijection \ref{BijSign} and the sign which occurs in particular in the definition of the suspension $\sh$.}
$$\sum_{r+s+t}b_u \circ(\bfone^{\otimes r}\otimes b_s\otimes\bfone^{\otimes t}),$$
where the sum runs over all decompositions $n=r+s+t$ and we set $u=r+1+t$.}
\end{enumerate}
\end{lemm}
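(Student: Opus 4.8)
The plan is to prove the equivalence $(2)\Leftrightarrow(3)$ directly from the universal property of the reduced tensor coalgebra recorded in Remark~\ref{RemCoDer}, and then to deduce $(1)\Leftrightarrow(3)$ by transporting the identities of~(3) through the sign isomorphism of \S\ref{Desuspension}. This is the classical dictionary of Stasheff between $A_{\infty}$-structures and square-zero coderivations of the bar coalgebra, so the real content of the argument is the bookkeeping that makes it explicit with the present sign conventions.

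For $(2)\Leftrightarrow(3)$ I would first observe that $b^{2}$ is again a coderivation of $\Barc(A)$: applying the coderivation relation $\Delta\circ b=(\bfone\otimes b+b\otimes\bfone)\circ\Delta$ twice, the two mixed terms $(\bfone\otimes b)\circ(b\otimes\bfone)$ and $(b\otimes\bfone)\circ(\bfone\otimes b)$ differ by the Koszul sign $(-1)^{1\cdot 1}=-1$ and hence cancel, leaving $\Delta\circ b^{2}=(\bfone\otimes b^{2}+b^{2}\otimes\bfone)\circ\Delta$. By the uniqueness part of Remark~\ref{RemCoDer}, a coderivation of $\redT(\sh A)$ vanishes as soon as its corestriction along the projection $\pi\colon\Barc(A)\to\sh A$ does; hence $b^{2}=0$ is equivalent to the vanishing, for every $n\geqslant 1$, of the component $(\sh A)^{\otimes n}\to\sh A$ of $\pi\circ b^{2}$. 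Now, by the explicit description of $b$ in Remark~\ref{RemCoDer}, on $(\sh A)^{\otimes n}$ the coderivation $b$ has $(\sh A)^{\otimes u}$-component $\sum_{r+s+t=n,\,r+1+t=u}\bfone^{\otimes r}\otimes b_{s}\otimes\bfone^{\otimes t}$, while the only component of the outer copy of $b$ that lands in $\sh A$ when applied to $(\sh A)^{\otimes u}$ is $b_{u}$ itself; composing these, the $(\sh A)^{\otimes n}$-component of $\pi\circ b^{2}$ equals $\sum_{r+s+t=n}b_{u}\circ(\bfone^{\otimes r}\otimes b_{s}\otimes\bfone^{\otimes t})$, which is exactly the expression occurring in~(3). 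Thus $(2)$ holds if and only if this sum vanishes for all $n$, that is, if and only if $(3)$ holds.

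For $(1)\Leftrightarrow(3)$ I would use that, by construction, the maps $b_{n}\colon(\sh A)^{\otimes n}\to\sh A$ are the images of the $m_{n}\colon A^{\otimes n}\to A$ under the isomorphism \eqref{BijSign}, so that $b_{n}=\pm\,s_{A}\circ m_{n}\circ(s_{A}^{-1})^{\otimes n}$ with the sign prescribed there. Substituting this into the $n$-th instance of~(3) and moving the desuspensions $s_{A}^{-1}$ past the identity factors $\bfone^{\otimes r},\bfone^{\otimes t}$ and past the inner block $b_{s}$ according to the Koszul rule, one rewrites each summand $b_{u}\circ(\bfone^{\otimes r}\otimes b_{s}\otimes\bfone^{\otimes t})$ in the form $s_{A}\circ\left((-1)^{r+st}\,m_{u}\circ(\bfone^{\otimes r}\otimes m_{s}\otimes\bfone^{\otimes t})\right)\circ(s_{A}^{-1})^{\otimes n}$, the exponent $r+st$ collecting all the interchange signs (which involve the degrees of $s_{A}^{-1}$ and of $b_{s}$, the latter being fixed by $m_{s}$ having bidegree $(2-s,0)$) together with the sign displayed in \eqref{BijSign} and the sign built into the suspension functor $\sh$. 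Since $s_{A}$ and $(s_{A}^{-1})^{\otimes n}$ are isomorphisms, the $n$-th instance of~(3) is then equivalent to $\sum_{r+s+t=n}(-1)^{r+st}m_{u}\circ(\bfone^{\otimes r}\otimes m_{s}\otimes\bfone^{\otimes t})=0$, which is precisely the relation defining an $A_{\infty}$-algebra in \S\ref{DefnAinf} (the case $n=1$ reducing to $m_{1}\circ m_{1}=0$, as it must). This gives $(1)\Leftrightarrow(3)$, and together with $(2)\Leftrightarrow(3)$ the lemma follows.

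The only genuinely delicate step, and the main obstacle, is this last sign computation: one must verify that the Koszul signs produced by the interchange law, the sign appearing in \eqref{BijSign}, and the sign hidden in the definition of $\sh$ combine to yield exactly $r+st$ in each summand. This is a finite, purely mechanical verification — and may alternatively be quoted from the standard treatments of $A_{\infty}$-algebras — so it presents no conceptual difficulty.
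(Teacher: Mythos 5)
Your proposal is correct and follows exactly the route the paper intends: the paper states this lemma without a written proof, deferring to the bar-construction dictionary (Remark~\ref{RemCoDer} for the equivalence of $b^{2}=0$ with the componentwise relations, and the bijection~\eqref{BijSign} together with the footnote for the translation of those relations into the signed $A_{\infty}$-identities), which is precisely what you spell out. The only step you leave implicit --- the verification that the Koszul signs, the sign in~\eqref{BijSign}, and the sign in the suspension combine to $(-1)^{r+st}$ --- is likewise left implicit in the paper, so your argument matches and indeed exceeds the paper's level of detail.
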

Let $A$ and $B$ be two $A_{\infty}$-algebras. Via isomorphism (\ref{BijSign}), it is not difficult to see that $A_{\infty}$-morphisms $A\ra B$ are in one to one correspondence with maps of DG coalgebras $\Barc(A)\ra\Barc(B)$.
\subsection{Perturbation lemma and tensor trick}\label{Perturbation}
Let $C$ and $D$ be objects in $\catgrC$. Let 
\begin{equation}\label{SDRdatum}
\left(\xymatrix@C=.6cm{{C}\ar@<.5ex>[r]^\alpha & {D}\ar@<.5ex>[l]^r},H\right)
\end{equation}
be an SDR-datum.
\subsubsection{}\label{Contra} This SDR-datum is said to be a \emph{contraction} (or to satisfy the \emph{side conditions}) if its homotopy has the following properties:
\begin{enumerate}
\item{$H\circ\alpha=0$;}
\item{$r\circ H=0$;}
\item{$H^2=0$.}
\end{enumerate}
As noticed in \cite[\S 2.1]{MR893160}, it is always possible to alter the homotopy of a given a SDR-datum, in order to get a contraction. Namely first we consider the homotopy $H'=\delta(H)\circ H\circ\delta(H)$ which satisfies conditions (1) and (2), and then we get a contraction
$$\left(\xymatrix@C=.6cm{{C}\ar@<.5ex>[r]^\alpha & {D}\ar@<.5ex>[l]^r},H''\right) $$
by taking $H''=H'\circ d\circ H'$.
\subsubsection{}\label{Trickformula}
We assume here that the SDR-datum (\ref{SDRdatum}) is a contraction and that $D$ has an $A_{\infty}$-algebra structure. Consider the induced contraction between the corresponding bar DG coalgebras \footnote{The differential $d_C$ on the bar coalgebra $\Barc(C)$ (likewise for $D$) is gotten from the family of maps $d_{C,n}=0$ for $n\geqslant 2$ and $d_{C,1}=d_C$.}
\begin{equation}\label{CoalContr}
\left(\xymatrix@C=.6cm{{(\Barc(C),d_C)}\ar@<.5ex>[r]^{\Barc(\alpha)} & {(\Barc(D),d_D)}\ar@<.5ex>[l]^{\Barc(r)}},\Barc(H)\right);
\end{equation}
where $\Barc(\alpha),\Barc(r)$ are the obvious maps and $\Barc(H)$ denotes abusively the maps given on each factor $(\sh D)^{\otimes n}$ by
$$\sum_{i=1}^n\bfone^{\otimes i-1}\otimes \sh(H)\otimes (\sh(\alpha)\circ\sh(r))^{\otimes n-i}:(\sh D)^{\otimes n}\ra (\sh D)^{\otimes n}.$$
The maps $\Barc(\alpha),\Barc(r) $ are maps of coalgebras and $\Barc(H)$ is an homotopy of DG coalgebras. The structure of $A_{\infty}$-algebra on $D$ provides another differential $b_D$ on $D$. Let $t_D$ be the difference $t_D=b_D-d_D$. Following \cite[\S 3]{MR0301736}, one then defines inductively a sequence of graded maps $t_{D}^{(p)}$ 
$$t^{(1)}_{D}=t_D,\quad t_{D}^{(p+1)}=(t_D\circ \Barc(H))^{\circ p}\circ t_D.$$
Using the graded map $\Sigma_{D}^{(p)}=t^{(1)}_{D}+\cdots+t^{(p)}_{D}$, then we can modify inductively the graded maps in our contraction (\ref{CoalContr}) and get sequences of graded maps on $\Barc(C)$:
\begin{equation*}
\begin{split}
b_{C}^{(p+1)} & =b_{D}^{(p)}+\Barc(r)\circ t_{D}^{(p)}\circ\Barc(\alpha)=d_C+\Barc(r)\circ\Sigma_{D}^{(p)}\circ\Barc(\alpha)\\
\Barc(\alpha)^{(p+1)} & =\Barc(\alpha)^{(p)}+\Barc(H)\circ t_{D}^{(p)}\circ\Barc(\alpha)=\Barc(\alpha)+\Barc(H)\circ\Sigma_{D}^{(p)}\circ\Barc(\alpha);
\end{split}
\end{equation*}
and sequences of graded maps on $\Barc(D)$:
\begin{equation*}
\begin{split}
\Barc(r)^{(p+1)}& =\Barc(r)^{(p)}+\Barc(r)\circ t_{D}^{(p)}\circ\Barc(H)=\Barc(r)+\Barc(r)\circ\Sigma_{D}^{(p)}\circ\Barc(H)\\
\Barc(H)^{(p+1)} & =\Barc(H)^{(p)}+\Barc(H)\circ t_{D}^{(p)}\circ\Barc(H)=\Barc(H)+\Barc(H)\circ\Sigma_{D}^{(p)}\circ\Barc(H).
\end{split}
\end{equation*}
Now consider the increasing filtration on the bar coalgebra $\Barc(D)$ defined by 
$$F_p\Barc(D)=\begin{cases}{\displaystyle\bigoplus_{n\geqslant 1}^{p}(\sh D)^{\otimes n}} & \textrm{if $p\geqslant 1$}\\
0 & \textrm{otherwise}\end{cases}$$
and the similar filtration on the bar coalgebra $\Barc(C)$. The morphisms $\Barc(\alpha),\Barc(r)$ and $\Barc(H)$ are filtered \emph{i.e.} maps $F_p$ to $F_p$ whereas the morphism $t_D=b_D-d_D$ maps $F_p$ to $F_{p-1}$. As a result we have
$$b_{C}^{(p+1)}=b_{C}^{(p)},\qquad\Barc(\alpha)^{(p+1)}=\Barc(\alpha)^{(p)}$$
on $F_p\Barc(C)$ and 
$$\Barc(r)^{(p+1)}=\Barc(r)^{(p)},\qquad\Barc(H)^{(p+1)}=\Barc(H)^{(p)}$$
on $F_p\Barc(D)$. From this we get a graded map of degree $(1,0)$
$$b_{C}:\Barc(C)\ra\Barc(C)$$
a graded map of degree $(-1,0)$
$$\Barc(H)^{(\infty)}:\Barc(D)\ra\Barc(D)$$
and two graded maps of degree $(0,0)$
$$\xymatrix@C=.6cm{{\Barc(C)}\ar@<.5ex>[r]^{\Barc(\alpha)^{(\infty)}} & {\Barc(D)}\ar@<.5ex>[l]^{\Barc(r)^{(\infty)}}}.$$
The basic perturbation lemma \cite[lemma 3.2]{MR0301736} (see also \cite[lemma 2.1.2]{MR1103672}), shows that the morphisms above provide a contraction
$$\left(\xymatrix@C=.6cm{{(\Barc(C),b_{C})}\ar@<.5ex>[r]^{\Barc(\alpha)^{(\infty)}} & {(\Barc(D),b_D)}\ar@<.5ex>[l]^{\Barc(r)^{(\infty)}}},\Barc(H)^{(\infty)}\right).$$
Moreover $\Barc(\alpha)^{(\infty)},\Barc(r)^{(\infty)}$ are maps of DG coalgebras and $\Barc(H)^{(\infty)}$ is an homotopy of morphisms of DG coalgebras. If we translate this result back to the language of $A_{\infty}$-algebras we exactly get what we wanted. Namely $b_{C}$ provides an $A_{\infty}$-algebra structure on $C$, $\Barc(\alpha)^{(\infty)}$ and $\Barc(r)^{(\infty)} $ provides morphisms of $A_{\infty}$-algebras $\alpha_{\infty}:C\ra D$ and $r_{\infty}:D\ra C$ which lift $\alpha$ and $r$ and satisfy $r_{\infty}\circ\alpha_{\infty}=\bfone$, and finally $\Barc(H)^{(\infty)} $ provides an homotopy of $A_{\infty}$-morphisms between $\bfone$ and $\alpha_{\infty}\circ r_{\infty}$.
\subsection{Higher intersection products on cycle complexes}
Now  apply the perturbation technics as explained in \S \ref{Perturbation} to the contraction 
$$\left(\xymatrix@C=.6cm{{C^*(X,M)}\ar@<.5ex>[r]^{\alpha^{\bf{A}}_X} & {\sC^*(X,M)}\ar@<.5ex>[l]^{r^{\bf{A}}_X}},H^{\bf{A}}_X{''}\right)$$ 
where $\sC^*(X,M)$ is given the $A_{\infty}$-algebra structure constructed in \S \ref{HtpAinf} and $H^{\bf{A}}_X{''}$ is defined from $H^{\bf{A}}_X$ as in \S \ref{Contra}. We get a coderivation $b^\cap_X$ on the bar coalgebra $\Barc C^*(X,M)$ which satisfies $b^{\cap}_{X}\circ b^{\cap}_X$. This is our $A_{\infty}$-algebra structure on $C^*(X,M)$ and we denote by
$$m^{\cap}_{X,n}:C^*(X,M)^{\otimes n}\ra C^*(X,M)$$
the corresponding graded maps of bidegree $(2-n,0)$. We also have two morphisms of DG coalgebras
$$\alpha^{\dagger}_X:\Barc C^*(X,M)\ra\Barc \sC^*(X,M)\qquad r^{\dagger}_X:\Barc\sC^*(X,M)\ra\Barc C^*(X,M) $$
which lift our contraction. We denote by 
$$\alpha^{\infty}_{X,n}:C^*(X,M)^{\otimes n}\ra\sC^*(X,M)\qquad r^{\infty}_{X,n}:\sC^*(X,M)^{\otimes n}\ra C^*(X,M)$$
the morphisms obtained from $\alpha^{\dagger}_{X,n} $ and $r^{\dagger}_{X,n} $ by desuspension as in \S \ref{Desuspension}. 
Let us now show that the higher intersection products still satisfy the projection formula. This is a corollary of proposition \ref{ProjFormulaHomotopy}.
The computation is easy  but nevertheless a bit tedious. We first start by proving the following proposition which will be used to prove the projection formula: 
\begin{prop}\label{ProjFormulaAlpha}
Let $Y\ra X$ be a flat and proper morphism of smooth $k$-schemes of finite type. Let $u\geqslant 1$ be an integer and $r,t$ be nonnegative integers such that $r+1+t=u$. We have
$$\alpha^{\infty}_{X,u}\circ\left(\bfone^{\otimes r}_X\otimes f_*\otimes\bfone^{\otimes t}_X\right)=f_*\circ \alpha^{\infty}_{Y,u}\circ\left((f^*)^{\otimes r}\otimes\bfone_Y\otimes(f^*)^{\otimes t}\right)$$
and also
$$r^{\infty}_{X,u}\circ\left(\bfone^{\otimes r}_X\otimes f_*\otimes\bfone^{\otimes t}_X\right)=f_*\circ r^{\infty}_{Y,u}\circ\left((f^*)^{\otimes r}\otimes\bfone_Y\otimes(f^*)^{\otimes t}\right).$$
\end{prop}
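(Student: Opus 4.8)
The plan is to deduce both identities from two inputs: the flat base-change and projection-formula properties of the four basic maps, which are available for the complexes $\sC^*(-,M)$ by \S\ref{FourMapsHTP}, and the projection formula for the higher products established in Proposition \ref{ProjFormulaHomotopy}; then to propagate these compatibilities through the explicit formulas produced by the perturbation lemma in \S\ref{Perturbation}.

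First I would record the elementary compatibilities, for $f\colon Y\ra X$ flat and proper. Since $f$ is flat, each of the maps $\alpha^{\bf{A}}$, $r^{\bf{A}}$, $H^{\bf{A}}$ (hence $H^{\bf{A}}{''}$, which is built from $H^{\bf{A}}$, $d$, $\alpha^{\bf{A}}$ and $r^{\bf{A}}$ through $\delta(H^{\bf{A}})=\bfone-\alpha^{\bf{A}}\circ r^{\bf{A}}$), the differential $d$, and the higher products $m^{\bf{A},\cap}_s$ ($s\geqslant 2$) satisfies the \emph{pullback compatibility} $f^*\circ\phi_X=\phi_Y\circ(f^*)^{\otimes\mathrm{ar}(\phi)}$. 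For $\alpha^{\bf{A}},r^{\bf{A}},d$ this is immediate from the construction of $\sC^*(-,M)$ and \cite[\S 4]{RostChowCoeff}; for $H^{\bf{A}}$ it follows because $H^{\bf{A}}$ is assembled from flat pullbacks, multiplications by units whose coordinates are pulled back from $\bbA^1$, proper pushforwards and boundary maps, each compatible with flat pullback; for $m^{\bf{A},\cap}_s=(\eta^*_{X,s})^{-1}\circ\partial_{X,s}\circ\{t_1,\ldots,t_{s-1}\}\circ\rho^*_{X,s}\circ\mu^{\bf{A}}_{X,s}$ it follows because the higher deformation space $\scr D_{X,s}$ is compatible with flat base change by \S\ref{FuncDefNorm}, the product $\mu^{\bf{A}}$ with flat pullback by \S\ref{ProductHtp}, and the remaining factors by \S\ref{FourMapsHTP}. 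Since $f$ is also proper, the arity-one maps $\alpha^{\bf{A}},r^{\bf{A}},H^{\bf{A}}{''},d$ satisfy the \emph{pushforward compatibility} $\phi_X\circ f_*=f_*\circ\phi_Y$, by the base-change formula and the projection formula for pushforwards and multiplications by units from \cite[\S 4]{RostChowCoeff}; and for the higher products the corresponding \emph{mixed} compatibility is exactly Proposition \ref{ProjFormulaHomotopy}.

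Next I would feed these into the perturbation formula. By \S\ref{Perturbation} together with the desuspension dictionary of \S\ref{Desuspension}, each component $\alpha^{\infty}_{X,u}$ (resp.\ $r^{\infty}_{X,u}$) is, up to signs coming only from the suspension, a \emph{finite} sum of composites of tensor products of the elementary maps $\alpha^{\bf{A}}_X$, $r^{\bf{A}}_X$, $H^{\bf{A}}_X{''}$, $m^{\bf{A},\cap}_{X,s}$ and identities; finiteness holds because $t_D=b_D-d_D$ strictly lowers the bar length, so for a fixed source length only finitely many terms of $\Sigma_D^{(\infty)}$ contribute. The same combinatorial composite, read over $Y$, occurs in $\alpha^{\infty}_{Y,u}$ (resp.\ $r^{\infty}_{Y,u}$). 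It therefore suffices to prove, for each such composite $T_X=\psi^X_N\circ\cdots\circ\psi^X_1$ with each $\psi^X_j$ a tensor product of identities and one elementary map over $X$, that
$$T_X\circ(\bfone^{\otimes r}\otimes f_*\otimes\bfone^{\otimes t})=f_*\circ T_Y\circ((f^*)^{\otimes r}\otimes\bfone\otimes(f^*)^{\otimes t}).$$
I would prove this by a marking induction along the steps $\psi_j$, with invariant: at each stage one distinguished tensor factor of the $X$-side computation equals $f_*$ applied to the corresponding factor of the $Y$-side computation, while every other factor of the $Y$-side equals $f^*$ applied to the corresponding factor of the $X$-side. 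An arity-one step propagates this by the pushforward compatibility on the distinguished factor and the pullback compatibility on the others; a step applying $m^{\bf{A},\cap}_{X,s}$ not involving the distinguished factor stays entirely over $X$ and propagates it through the pullback compatibility of $m^{\bf{A},\cap}_s$; and a step applying $m^{\bf{A},\cap}_{X,s}$ whose inputs include the distinguished factor is rewritten by Proposition \ref{ProjFormulaHomotopy} as $f_*$ of $m^{\bf{A},\cap}_{Y,s}$ applied to the $Y$-side strand and to the $f^*$ of the other strands, which is exactly the corresponding $Y$-side step. Since the distinguished factor never disappears, the single factor at the end is distinguished, giving the displayed identity; summing over the terms of the perturbation series yields the proposition.

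The main difficulty I anticipate is not conceptual but organizational: one must set up the marking invariant so that it survives the bar-coalgebra maps $\Barc(\alpha),\Barc(r)$ and especially $\Barc(H)$, whose tail factors carry $\sh(\alpha)\circ\sh(r)$, as well as the coderivation $t_D$; and one must check that the signs introduced by the suspension in \S\ref{Desuspension} and \S\ref{DefnAinf} agree on the two sides, which they do because they depend only on bidegrees and these are transported consistently by $f^*$ and $f_*$ on both sides. A secondary point is to pin down the exact form of the pushforward and projection compatibilities for boundary maps and for multiplications by units used in the arity-one steps, which I would extract from \cite[\S 4]{RostChowCoeff} and transport to $\sC^*(-,M)$ as in \S\ref{FourMapsHTP}.
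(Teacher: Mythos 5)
Your overall route is the paper's: write $\alpha^{\infty}_{X,u}$ and $r^{\infty}_{X,u}$ via the explicit perturbation formulas of \S\ref{Perturbation}, observe that only finitely many terms contribute in each arity because $t_D$ lowers the bar length, and push the factor $\bfone^{\otimes r}\otimes f_*\otimes\bfone^{\otimes t}$ through each composite, using the compatibility of $\alpha^{\bf{A}}$, $r^{\bf{A}}$, $H^{\bf{A}}{''}$ with $f_*$ and $f^*$ for the arity-one steps and Proposition \ref{ProjFormulaHomotopy} for the higher-product steps that absorb the distinguished strand. That is exactly how the paper proceeds (it organizes the bookkeeping as an induction on the terms $t^{(p)}$ rather than as your marking induction, but the inputs and the mechanism are the same).

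There is, however, one ingredient you invoke that is neither stated nor proved in the paper and whose justification, as you give it, does not go through: the flat-pullback compatibility $f^*\circ m^{\bf{A},\cap}_{X,s}=m^{\bf{A},\cap}_{Y,s}\circ(f^*)^{\otimes s}$, which your marking induction needs whenever a higher product is applied to a block of strands \emph{not} containing the distinguished one (those strands must be transported to the $Y$-side by $f^*$ to maintain your invariant). You justify it by saying that $\scr D_{X,s}$ is \leftguill compatible with flat base change by \S\ref{FuncDefNorm}\rightguill. But the squares comparing $\delta^i_{Y,s}$ with $\delta^i_{X,s}$ are commutative without being cartesian: the preimage of the partial diagonal $\delta^i_{X,s}(X^{s-1})$ in $Y^s$ is $Y^{i-1}\times_k(Y\times_XY)\times_kY^{s-1-i}$, which strictly contains $\delta^i_{Y,s}(Y^{s-1})$ unless $f$ is \'etale. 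Hence $\scr D_{Y,s}$ is \emph{not} the base change of $\scr D_{X,s}$ along $Y^s\to X^s$, the natural map $\scr D_{Y,s}\ra\scr D_{X,s}$ from \S\ref{FuncDefNorm} is not obviously flat, and the compatibility of the boundary maps $\partial_{X,s}$ and $\partial_{Y,s}$ with $f^*$ is not a direct application of \cite[Proposition 4.4 (2)]{RostChowCoeff}. This compatibility of the higher products with flat pullback may well be true, but it requires a separate geometric argument; as written it is a genuine gap in your proof. You can minimize (though not entirely obviously eliminate) your dependence on it by arranging the computation as the paper does, so that in the base case the higher product is always applied to the full block containing the $f_*$-carrying strand and the $f^*$'s produced by Proposition \ref{ProjFormulaHomotopy} land only on leaves or on the arity-one maps $\bfone$, $H^{\bf{A}}{''}$, $\alpha^{\bf{A}}\circ r^{\bf{A}}$, whose $f^*$-compatibility is covered by \S\ref{FourMapsHTP}.
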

\begin{proof}
To avoid signs and to be able to use the formulas given by the perturbation lemma it is more convenient to prove the two equalities using the bar construction, \emph{i.e.} in terms of $\alpha^{\dagger}_{X,n}$ and $r^{\dagger}_{X,n}$. We have \footnote{In the computations that follow we also denote by $\bfone_X$ the identity of $\sh C^*(X,M)$ or $\sh\sC^*(X,M)$.}
\begin{equation*}
\begin{split}
\left(t_X\circ\Barc(H_X^{\bf{A}}{''})\right)_u &= b^{\bf{A},\cap}_{X,u}\circ\left\lbrack\sum_{i=1}^u\bfone_X^{\otimes i-1}\otimes \sh(H_X^{\bf{A}}{''})\otimes (\sh(\alpha^{\bf{A}}_X)\circ\sh(r^{\bf{A}}_X))^{\otimes u-i}\right]\\
&=\sum_{i=1}^u b^{\bf{A},\cap}_{X,u}\circ\left(\bfone_X^{\otimes i-1}\otimes \sh(H_X^{\bf{A}}{''})\otimes (\sh(\alpha^{\bf{A}}_X)\circ\sh(r^{\bf{A}}_X))^{\otimes u-i}\right).
\end{split}
\end{equation*}
Assume $i=r+1$. Then
\begin{equation*}
\begin{split}
b^{\bf{A},\cap}_{X,u}  \circ\Big(\bfone_X^{\otimes i-1} & \otimes \sh(H_X^{\bf{A}}{''})\otimes  (\sh(\alpha^{\bf{A}}_X)\circ\sh(r^{\bf{A}}_X))^{\otimes u-i}\Big)\circ\left(\bfone^{\otimes r}\otimes \sh(f_*)\otimes\bfone^{\otimes t}\right)\\
& = b^{\bf{A},\cap}_{X,u}\circ\Big(\bfone_X^{\otimes i-1}\otimes \left(\sh(H_X^{\bf{A}}{''})\circ\sh(f_*)\right)\otimes (\sh(\alpha^{\bf{A}}_X)\circ\sh(r^{\bf{A}}_X))^{\otimes u-i}\Big)\\
& = b^{\bf{A},\cap}_{X,u}\circ\Big(\bfone_X^{\otimes i-1}\otimes \left(\sh(f_*)\circ \sh(H_Y^{\bf{A}}{''})\right)\otimes (\sh(\alpha^{\bf{A}}_X)\circ\sh(r^{\bf{A}}_X))^{\otimes u-i}\Big)\\
& =  b^{\bf{A},\cap}_{X,u}\circ\left(\bfone_X^{\otimes r}\otimes\sh(f_*)\otimes\bfone_X^{\otimes t}\right)\circ\Big(\bfone_X^{\otimes i-1}\otimes \sh(H_Y^{\bf{A}}{''})\otimes(\sh(\alpha^{\bf{A}}_X)\circ\sh(r^{\bf{A}}_X))^{\otimes u-i}\Big).
\end{split}
\end{equation*}
Proposition \ref{ProjFormulaHomotopy} yields
\begin{equation*}
\begin{split}
b^{\bf{A},\cap}_{X,u}  \circ\Big(&\bfone_X^{\otimes i-1}\otimes \sh(H_X^{\bf{A}}{''})\otimes  (\sh(\alpha^{\bf{A}}_X)\circ\sh(r^{\bf{A}}_X))^{\otimes u-i}\Big)\circ\left(\bfone_X^{\otimes r}\otimes \sh(f_*)\otimes\bfone_X^{\otimes t}\right)\\
& =\sh(f_*)\circ b^{\bf{A},\cap}_{Y,u}\circ\left(\sh(f^*)^{\otimes r}\otimes\bfone_Y\otimes\sh(f^*)^{\otimes t}\right)\circ\Big(\bfone_Y^{\otimes i-1}\otimes \sh(H_Y^{\bf{A}}{''})\otimes(\sh(\alpha^{\bf{A}}_X)\circ\sh(r^{\bf{A}}_X))^{\otimes u-i}\Big)\\
& = \sh(f_*)\circ b^{\bf{A},\cap}_{Y,u}\circ\Big(\bfone_Y^{\otimes i-1}\otimes \sh(H_Y^{\bf{A}}{''})\otimes(\sh(\alpha^{\bf{A}}_Y)\circ\sh(r^{\bf{A}}_Y))^{\otimes u-i}\Big)\circ\left(\sh(f^*)^{\otimes r}\otimes\bfone_Y\otimes\sh(f^*)^{\otimes t}\right)
 \end{split}
\end{equation*}
since $f^*\circ\alpha^{\bf{A}}_X\circ r^{\bf{A}}_X=\alpha^{\bf{A}}_Y\circ r^{\bf{A}}_Y\circ f^* $. We can do exactly the same kind of computation when $i\neq r+1$ and then taking the sum we get
$$\left(t_X\circ\Barc(H_X^{\bf{A}}{''})\right)_u\circ\left(\bfone_X^{\otimes r}\otimes \sh(f_*)\otimes\bfone_X^{\otimes t}\right)= \sh(f_*)\circ\left(t_Y\circ\Barc(H_Y^{\bf{A}}{''})\right)_u\circ \left(\sh(f^*)^{\otimes r}\otimes\bfone_Y\otimes\sh(f^*)^{\otimes t}\right).$$
Now it is not difficult to check by induction that the projection formula holds for all the maps $t^{(i)}_{X,u}$ where $1\leqslant i\leqslant u$ and consequently holds for $\Sigma^{(u)}_{X,u}$.\par
Now let us check the projection formula for the morphism $\alpha^{\dagger}_{X,u}$. We have $\alpha^{\dagger}_{X,1}=\sh(\alpha^{\bf{A}}_X)$ and $\alpha^{\bf{A}}_X\circ f_*=f_*\circ\alpha^{\bf{A}}_Y$, so we may assume that $u\geqslant 2$. The morphism $\alpha^{\dagger}_X$ is given by
$$\alpha^{\dagger}_X=\Barc(\alpha^{\bf{A}}_X)+\Barc(H^{\bf{A}}_X{''})\circ \Sigma_{X}^{(u)}\circ\Barc(\alpha^{\bf{A}}_X) $$
on $(\sh C^*(X,M))^{\otimes u}$ and therefore $\alpha^{\dagger}_{X,u}$ is given by
$$\alpha^{\dagger}_{X,u}=\sh H^{\bf{A}}_X{''}\circ\Sigma_{X,u}^{(u)}\circ(\sh \alpha^{\bf{A}}_X)^{\otimes u}.$$
The projection formula for $\alpha^{\dagger}_{X,u}$ follows from the projection formula for $\Sigma^{(u)}_{X,u}$ since
\begin{equation*}
\begin{split}
& f_*\circ H^{\bf{A}}_Y{''}=H^{\bf{A}}_X{''}\circ f_*\\
&(\alpha^{\bf{A}}_X)^{\otimes u}\circ\left(\bfone_X^{\otimes r}\otimes f_*\otimes\bfone_X^{\otimes t}\right)=\left(\bfone_X^{\otimes r}\otimes f_*\otimes\bfone_X^{\otimes t}\right)\circ(\alpha^{\bf{A}}_X)^{\otimes u}\\
&(\alpha^{\bf{A}}_Y)^{\otimes u}\circ\left((f^*)^{\otimes r}\otimes\bfone_Y\otimes(f^*)^{\otimes t}\right)=\left((f^*)^{\otimes r}\otimes\bfone_Y\otimes(f^*)^{\otimes t}\right)\circ(\alpha^{\bf{A}}_X)^{\otimes u}.
\end{split}
\end{equation*}
The morphism $r^{\dagger}_X$ is given by
$$r^{\dagger}_X=\Barc(r^{\bf{A}}_X)+\Barc(r^{\bf{A}}_X)\circ \Sigma^{(u)}_{X}\circ\Barc(H^{\bf{A}}_X{''});$$
this yields 
$$r^{\dagger}_{X,u}=\sh r^{\bf{A}}_X\circ\Sigma^{(u)}_{X,u}\circ \left\lbrack\sum_{i=1}^u\bfone^{\otimes i-1}\otimes \sh(H^{\bf{A}}_X{''})\otimes (\sh(\alpha^{\bf{A}}_X)\circ\sh(r^{\bf{A}}_X))^{\otimes u-i}\right\rbrack.$$
The projection formula follows also from the projection formula for the maps $\Sigma^{(u)}_{X,u}$.
\end{proof}
\begin{prop}
Let $Y\ra X$ be a proper and flat morphism of smooth $k$-schemes of finite type. Let $u\geqslant 1$ be an integer and $r,t$ be nonnegative integers such that $r+1+t=u$. We have
$$m^{\cap}_{X,u}\circ\left(\bfone_X^{\otimes r}\otimes f_*\otimes\bfone_X^{\otimes t}\right)=f_*\circ m^{\cap}_{Y,u}\circ\left((f^*)^{\otimes r}\otimes_Y\bfone\otimes(f^*)^{\otimes t}\right).$$
\end{prop}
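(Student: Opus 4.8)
The plan is to deduce this from the projection formula for the homotopy invariant higher products (Proposition \ref{ProjFormulaHomotopy}) together with the projection formula for the comparison morphisms (Proposition \ref{ProjFormulaAlpha}), working throughout in the bar construction so as to avoid signs; the argument runs parallel to the proof of Proposition \ref{ProjFormulaAlpha}. First I would dispose of the case $u=1$: there the asserted identity is merely the closedness of $f_*$, i.e. \cite[Proposition~4.6(1)]{RostChowCoeff} as recalled in \S\ref{FourMapsHTP}, so from now on $u\geqslant 2$. Since the desuspension of \S\ref{Desuspension} applies identically to both sides, it suffices to establish the corresponding identity for the $u$-th component $b^{\cap}_{X,u}\colon (\sh C^*(X,M))^{\otimes u}\ra\sh C^*(X,M)$ of the coderivation $b^{\cap}_X$, namely
$$b^{\cap}_{X,u}\circ\left(\bfone^{\otimes r}_X\otimes\sh(f_*)\otimes\bfone^{\otimes t}_X\right)=\sh(f_*)\circ b^{\cap}_{Y,u}\circ\left(\sh(f^*)^{\otimes r}\otimes\bfone_Y\otimes\sh(f^*)^{\otimes t}\right).$$

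Next I would read off $b^{\cap}_{X,u}$ from the perturbation lemma. From the formula $b_C=d_C+\Barc(r^{\bf{A}}_X)\circ\Sigma^{(\infty)}_X\circ\Barc(\alpha^{\bf{A}}_X)$ of \S\ref{Trickformula}, the fact that $d_C$ has no component in degrees $\geqslant 2$ (and that $\Sigma^{(\infty)}_X$ vanishes on $F_1$), and the fact that $\alpha^{\bf{A}}_X$ and $r^{\bf{A}}_X$ are strict — so that $\Barc(\alpha^{\bf{A}}_X)$ restricts to $(\sh\alpha^{\bf{A}}_X)^{\otimes u}$ on $(\sh C^*(X,M))^{\otimes u}$ and $\Barc(r^{\bf{A}}_X)$ projects through $\sh(r^{\bf{A}}_X)$ — one gets, for $u\geqslant 2$ and exactly as for $\alpha^{\dagger}_{X,u}$ in the proof of Proposition \ref{ProjFormulaAlpha},
$$b^{\cap}_{X,u}=\sh(r^{\bf{A}}_X)\circ\Sigma^{(u)}_{X,u}\circ(\sh\alpha^{\bf{A}}_X)^{\otimes u}.$$
The inputs I would then invoke are the compatibilities $\alpha^{\bf{A}}_X\circ f_*=f_*\circ\alpha^{\bf{A}}_Y$, $f^*\circ\alpha^{\bf{A}}_X=\alpha^{\bf{A}}_Y\circ f^*$ and $r^{\bf{A}}_X\circ f_*=f_*\circ r^{\bf{A}}_Y$ — which follow from the base change formula \cite[Proposition~4.1(3)]{RostChowCoeff} and from \cite[Lemmas~4.2, 4.3 and Proposition~4.4]{RostChowCoeff} applied to the building blocks $j^*$, $\{-1/t\}$, $\partial_\infty$, $\pi^*$ of $\alpha^{\bf{A}}$ and $r^{\bf{A}}$, just as in \S\ref{FourMapsHTP} — together with the projection formula for the maps $\Sigma^{(u)}_{X,u}$ which is established inside the proof of Proposition \ref{ProjFormulaAlpha}.

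With these in hand the verification is a short slalom through the formula for $b^{\cap}_{X,u}$: precomposing with $\bfone^{\otimes r}_X\otimes\sh(f_*)\otimes\bfone^{\otimes t}_X$, one first pushes $(\sh\alpha^{\bf{A}}_X)^{\otimes u}$ to the right of it using $\alpha^{\bf{A}}_X\circ f_*=f_*\circ\alpha^{\bf{A}}_Y$ in the middle slot; then one moves the resulting $\bfone^{\otimes r}\otimes\sh(f_*)\otimes\bfone^{\otimes t}$ through $\Sigma^{(u)}_{X,u}$ by the projection formula for $\Sigma^{(u)}$; then one commutes $\sh(f_*)$ past $\sh(r^{\bf{A}}_X)$ via $r^{\bf{A}}_X\circ f_*=f_*\circ r^{\bf{A}}_Y$; and finally one rewrites each surviving $\sh(f^*)\circ\sh(\alpha^{\bf{A}}_X)$ as $\sh(\alpha^{\bf{A}}_Y)\circ\sh(f^*)$. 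Collecting the terms recovers $\sh(f_*)\circ b^{\cap}_{Y,u}\circ(\sh(f^*)^{\otimes r}\otimes\bfone_Y\otimes\sh(f^*)^{\otimes t})$, and desuspending yields the claim. I expect no substantive obstacle here — the geometry has already been spent in Propositions \ref{ProjFormulaHomotopy} and \ref{ProjFormulaAlpha} — the only delicate point being the careful bookkeeping of the base schemes carried by each occurrence of $f_*$, $f^*$, $\alpha^{\bf{A}}$, $r^{\bf{A}}$ and of the tensor slot in which $f_*$ sits. One could alternatively phrase the whole thing more conceptually, by observing that the perturbation construction is natural in $X$ under flat pullback and proper pushforward, but the hands-on route above stays closest to the explicit formulas already in play.
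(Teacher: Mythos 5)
Your argument is correct, and it reaches the conclusion by a slightly different decomposition than the paper. The paper starts from the identity $b^{\cap}_X=r^{\dagger}_X\circ b^{\bf{A},\cap}_X\circ\alpha^{\dagger}_X$ (valid because $\alpha^{\dagger}_X$ is a morphism of DG coalgebras and $r^{\dagger}_X\circ\alpha^{\dagger}_X=\bfone$), expands $b^{\cap}_{X,u}$ as a sum over all decompositions $i_1+\cdots+i_k=u$, $n+s+m=k$, of terms $r^{\dagger}_{X,n+1+m}\circ(\bfone^{\otimes n}\otimes b^{\bf{A},\cap}_{X,s}\otimes\bfone^{\otimes m})\circ(\alpha^{\dagger}_{X,i_1}\otimes\cdots\otimes\alpha^{\dagger}_{X,i_k})$, locates the tensor slot carrying $\sh(f_*)$, and then applies Proposition \ref{ProjFormulaAlpha} (as a statement) to the relevant $\alpha^{\dagger}$- and $r^{\dagger}$-factors together with Proposition \ref{ProjFormulaHomotopy} for the inner $b^{\bf{A},\cap}_{X,s}$ and the compatibility $f^*\circ\alpha^{\dagger}_{X,i}=\alpha^{\dagger}_{Y,i}\circ f^*$. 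You instead read $b^{\cap}_{X,u}=\sh(r^{\bf{A}}_X)\circ\Sigma^{(u)}_{X,u}\circ(\sh\alpha^{\bf{A}}_X)^{\otimes u}$ directly off the perturbation formula $b_C=d_C+\Barc(r)\circ\Sigma\circ\Barc(\alpha)$ of \S\ref{Trickformula} and reduce everything to the projection formula for $\Sigma^{(u)}_{X,u}$, which is exactly the intermediate result established inside the proof of Proposition \ref{ProjFormulaAlpha}; the only extra input you need is the strict compatibility $r^{\bf{A}}_X\circ f_*=f_*\circ r^{\bf{A}}_Y$, which follows from \cite[Propositions 4.1, 4.4 and Lemma 4.2]{RostChowCoeff} applied to the building blocks of $r^{\bf{A}}_X$ and is in any case implicitly required by the paper's own use of $H^{\bf{A}}_X{''}\circ f_*=f_*\circ H^{\bf{A}}_Y{''}$. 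Your route buys a shorter computation with no multi-index bookkeeping, at the cost of invoking an internal step of another proof rather than its statement; both arguments spend the geometry in the same two places, namely Propositions \ref{ProjFormulaHomotopy} and \ref{ProjFormulaAlpha}.
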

\begin{proof}
This is equivalent to prove that the morphisms $b^{\cap}_{X,n}$ satisfy the following projection formula:
$$b^{\cap}_{X,u}\circ\left(\bfone_X^{\otimes r}\otimes\sh f_*\otimes\bfone_X^{\otimes t}\right)=\sh f_*\circ b^{\cap}_{Y,u}\circ\left((\sh f^*)^{\otimes r}\otimes\bfone_Y\otimes(\sh f^*)^{\otimes t}\right).$$
Since $\alpha^{\dagger}_X$ is a morphism of $A_{\infty}$-algebras and $r^{\dagger}_X\circ\alpha^{\dagger}_X=\bfone$, we have $b^{\cap}_X=r^{\dagger}_X\circ b^{\bf{A},\cap}_X\circ\alpha^{\dagger}_X$. From this one deduces that
$$b^{\cap}_{X,u}=\sum r^{\dagger}_{X,n+1+m}\circ\left(\bfone_X^{\otimes n}\otimes b^{\bf{A},\cap}_{X,s}\otimes\bfone_X^{\otimes m}\right)\circ\left(\alpha^{\dagger}_{X,i_1}\otimes\cdots\otimes\alpha^{\dagger}_{X,i_k}\right),$$
where the sum runs over all decompositions $i_1+\cdots+i_k=u$ and $n+s+m=k$. Fix such decompositions and let $i_0=0$. There exists an integer $\ell\in\{1,\ldots,k\}$, such that $i_0+\cdots+ i_{\ell-1}\leqslant r+1 \leqslant i_0+\cdots +i_{\ell}$. Set $r_{\ell}=r-i_0-\cdots-i_{\ell-1}$ and let  $t_{\ell}$ be the integer such that $i_{\ell}=r_\ell+1+t_{\ell}$. We have 
$$\left(\alpha^{\dagger}_{X,i_1}\otimes\cdots\otimes\alpha^{\dagger}_{X,i_k}\right)\circ\left(\bfone_X^{\otimes r}\otimes \sh f_*\otimes\bfone_X^{\otimes t}\right)= \alpha^{\dagger}_{X,i_1}\otimes\cdots \otimes\left(\alpha^{\dagger}_{X,i_{\ell}}\circ\left(\bfone^{\otimes r_\ell}_X\otimes\sh f_*\otimes\bfone_X^{\otimes t_{\ell}}\right)\right)\otimes\cdots\otimes\alpha^{\dagger}_{X,i_k} $$
By lemma \ref{ProjFormulaAlpha} 
$$\alpha^{\dagger}_{X,i_{\ell}}\circ\left(\bfone^{\otimes r_\ell}_X\otimes\sh f_*\otimes\bfone_X^{\otimes t_{\ell}}\right)=\sh f_*\circ\alpha^{\dagger}_{Y,i_{\ell}}\circ\left((\sh f^*)^{\otimes r_\ell}\otimes \bfone_Y\otimes(\sh f^*)^{\otimes t_{\ell}}\right);$$
and so we have
\begin{equation*}
\begin{split}
\left(\alpha^{\dagger}_{X,i_1}\otimes\cdots\otimes\alpha^{\dagger}_{X,i_k}\right)\kern-.2em\circ\kern-.2em\left(\bfone_X^{\otimes r}\otimes\sh f_*\otimes\bfone_X^{\otimes t}\right)\kern-.2em=\kern-.2em&\left(\bfone^{\otimes \ell-1}_X\otimes\sh f_*\otimes \bfone_X^{\otimes k-\ell}\right)\kern-.2em\circ\kern-.2em\left(\alpha^{\dagger}_{X,i_1}\otimes\cdots\otimes \alpha^{\dagger}_{Y,i_\ell}\otimes\cdots\otimes\alpha^{\dagger}_{X,i_k}\right)\\
&\circ\left(\bfone_X^{\otimes  i_1}\otimes\cdots\otimes\left((\sh f^*)^{\otimes r_\ell}_X\otimes \bfone_Y\otimes(\sh f^*)^{\otimes t_{\ell}}\right)\otimes\cdots\otimes\bfone_X^{i_k}\right).
\end{split}
\end{equation*}
Assume now that $n\leqslant \ell\leqslant n+s$. In the other cases the computation is similar and even easier. In order to keep notation as simple as possible, we may further assume that $\ell=n+1$ without loss of generality. In that case an application of proposition \ref{ProjFormulaHomotopy} yields
\begin{equation*}
\begin{split}\left(\bfone_X^{\otimes n}\otimes b^{\bf{A},\cap}_{X,s}\otimes\bfone_X^{\otimes m}\right)&\circ\left(\bfone^{\otimes \ell-1}_X\otimes\sh f_*\otimes \bfone_X^{\otimes k-\ell}\right)=\bfone_X^{\otimes n}\otimes\left(b^{\bf{A},\cap}_{X,s}\circ\left(\sh f_*\otimes\bfone_X^{\otimes s-1}\right)\right)\otimes\bfone_X^{\otimes m}\\
&=\bfone_X^{\otimes n}\otimes\left(\sh f_*\circ b^{\bf{A},\cap}_{Y,s}\circ\left(\bfone_Y\otimes(\sh f^*)^{\otimes s-1}\right)\right)\otimes\bfone_X^{\otimes m}\\
&=\left(\bfone_X^{\otimes n}\otimes\sh f_*\otimes\bfone_X^{\otimes m}\right)\kern-.2em\circ\kern-.2em\left(\bfone_X^{\otimes n}\otimes b^{\bf{A},\cap}_{Y,s}\otimes\bfone_X^{\otimes m}\right)\kern-.2em\circ\kern-.2em\left(\bfone_X^{\otimes n}\otimes\bfone_Y\otimes (\sh f^*)^{\otimes s-1}\otimes\bfone_X^{\otimes m}\right).
\end{split}
\end{equation*}
By lemma \ref{ProjFormulaAlpha} 
$$r^{\dagger}_{X,n+1+m}\circ\left(\bfone_X^{\otimes n}\otimes \sh f_*\otimes\bfone_X^{\otimes m}\right)=\sh f_*\circ r^{\dagger}_{Y,n+1+m}\circ\left((\sh f^*)^{\otimes n}\otimes \bfone_Y\otimes(\sh f^*)^{\otimes m}\right).$$
Since $f^*\circ\alpha^{\dagger}_{X,i}=\alpha^{\dagger}_{Y,i}\circ f^*$ for any integer $i$, we obtain 
\begin{equation*}
\begin{split}
r^{\dagger}_{X,n+1+m}&\circ\left(\bfone_X^{\otimes n}\otimes b^{\bf{A},\cap}_{X,s}\otimes\bfone_X^{\otimes m}\right)\circ\left(\alpha^{\dagger}_{X,i_1}\otimes\cdots\otimes\alpha^{\dagger}_{X,i_k}\right)\circ\left(\bfone_X^{\otimes r}\otimes \sh f_*\otimes\bfone_X^{\otimes t}\right)\\
&=\sh f_*\circ r^{\dagger}_{Y,n+1+m}\circ\left(\bfone_Y^{\otimes n}\otimes b^{\bf{A},\cap}_{Y,s}\otimes\bfone_Y^{\otimes m}\right)\kern-.2em\circ\kern-.2em\left(\alpha^{\dagger}_{Y,i_1}\otimes\cdots\otimes\alpha^{\dagger}_{Y,i_k}\right)\kern-.2em\circ\kern-.2em\left((\sh f^*)^{\otimes r}\otimes\bfone_Y\otimes(\sh f^*)^{\otimes t}\right)
\end{split}
\end{equation*}
Taking the sum over all decompositions we finally get 
$$b^{\cap}_{X,u}\circ\left(\bfone_X^{\otimes r}\otimes\sh f_*\otimes\bfone_X^{\otimes t}\right)=\sh f_*\circ b^{\cap}_{Y,u}\circ\left((\sh f^*)^{\otimes r}\otimes_Y\bfone\otimes(\sh f^*)^{\otimes t}\right).$$
This is the projection formula as desired.
\end{proof}

\subsection*{Applications} In a further development we use this $A_{\infty}$-algebra structure to construct an $A_{\infty}$-category of \guillemotleft proper correspondences \guillemotright, whose associated triangulated category of twisted complexes is closely related to the weight complexes defined by H. Gillet and C. Soul\'e.

\appendix
\section{Sign conventions}
The paper \cite{RostChowCoeff} is rather sketchy about sign conventions. When looking carefully at \S 3.9 in \emph{loc.cit.}, one might get puzzled by the somehow strange choices of signs made, moreover in formula 14.3 and the chain rule formula 14.4 of \emph{loc.cit.} the signs are not correct and do not follow from an application of the rules imposed in the definition of a cycle module.  In \emph{loc.cit} the chain complex $C_*(X,M)$ is bigraded, defined by
$$C_p(X,M,n)=\bigoplus_{x\in X_{(p)}}M_{n+p}(\kappa(x));$$
and the definition of the differential for bigraded maps such as the four basic maps only involve the degree of the map with respect with the second grading. This choice is incompatible with the fact that $\mathrm{sign}(f^*)$ should be $1$ for a flat morphism as stated. As one may guess, this is of little importance,  however since we have to work with $A_{\infty}$-algebras we feel the need to be precise on this matter and provide the following tedious definitions.\par
A $\bbZ$-graded complex of abelian groups is a cochain complex $C$ with a decomposition of $C$ into a direct sum over $\bbZ$ of a family of subcomplexes $C(n)$. Let $C$ and $D$ be $\bbZ$-graded complexes of abelian groups.
Graded morphisms $C\ra D$  mapping $C^p(n)$ to $D^{p+r}(n-s)$ are said to be of bidegree $(r,s)$ and form an abelian group $\catgrC^{(r,s)}(C,D)$. The abelian group of graded morphisms of total degree $k$ is then the direct sum
$$\catgrC^k(C,D)=\bigoplus_{r+s=k}\catgrC^{(r,s)}(C,D).$$
These Hom groups are the components a cochain complex with differential given by
$$\delta(u):=d\circ u+(-1)^{k+1}u\circ d $$
for a morphism $u$ of total degree $k$. As usual a graded morphism $u$ is said to be closed when $\delta(u)=0$. The suspension $\sh C$ of  the $\bbZ$-graded complex of abelian groups 
$C$ is defined by $(\sh C)^i(n)=C^{i+1}(n) $ together with the differential given by $d_{\sh C}^i=-d^{i+1}_C$. By definition identities on each components provide a closed morphism $s_C:C\ra\sh C$ of degree $(-1,0)$. The tensor product of the $\bbZ$-graded complexes of abelian groups $C$ and $D$ is defined to be the bigraded abelian group with components
$$(C\otimes D)^k(\ell)=\bigoplus_{\substack{k=p+q\\ \ell=n+m}}C^{p}(n)\otimes_{\bbZ}C^{q}(m) $$
endowed with the differential given by
$$d(a\otimes b)=da\otimes b+(-1)^{p+n}a\otimes db.$$
If $u:C\ra C'$ and $v:D\ra D'$ are graded maps, their tensor product $u\otimes v$ is the graded map given by
$$(u\otimes v)(a\otimes b)=(-1)^{(p+n)(r+s)}u(a)\otimes v(b)$$
where $(r,s)$ is the bidegree of $v$.
With this definition, the differential of the tensor product is simply given by the equality $d_{C\otimes D}=d_C\otimes\bfone+\bfone\otimes d_D$. The commutativity isomorphism is then given as usual by the Koszul rule with respect to the total degree.
With these conventions, $\bbZ$-graded complexes of abelian groups form a tensor DG category \footnote{To define DG-categories we use the sign conventions for complexes of abelian groups induced by the signs convention we have detailed for $\catgrC$.} denoted by $\catgrC$ in this paper.
\backmatter
\bibliographystyle{smfalpha}
\bibliography{HigherChowCoeffbiblio}\end{document}